\definecolor{ForestGreen}{rgb}{0.1333,0.5451,0.1333}
\crefname{equation}{}{}
\newcommand\remove[1]{}
\newtheorem{lemma}{Lemma}[section]
\newtheorem{theorem}{Theorem}
\newtheorem*{lemma*}{Lemma}
\newtheorem*{corollary*}{Corollary}
\theoremstyle{definition}
\newtheorem*{theorem*}{Theorem}
\newtheorem{definition}[lemma]{Definition}
\newtheorem*{rem*}{Remark}
\newtheorem{obs}{Observation}
\newcommand{\mc}{\mathcal}
\newcommand{\eps}{\varepsilon}
\newcommand{\R}{\mathbb{R}}
\newcommand{\E}{\mathop{\mathbb{E}}}
\renewcommand{\O}{\widetilde{O}}
\crefname{algocf}{Algorithm}{Algorithms}
\renewcommand{\l}{\langle}
\newcommand{\rr}{\rangle}
\newcommand{\supp}{\mathsf{supp}}
\newcommand{\Z}{\mathbb{Z}}
\newcommand{\bbC}{\mathbb{C}}
\newcommand{\A}{\Sigma}
\newcommand{\B}{\Gamma}
\newcommand{\C}{\Phi}
\renewcommand{\bar}{\overline}
\newcommand{\wt}{\widetilde}
\renewcommand{\bar}{\overline}
\newcommand{\F}{\mathbb{F}}
\newcommand{\dtv}{\mathrm{d}_{\mathrm{TV}}}
\newcommand{\dhj}{\mathsf{DHJ}}
\newcommand{\struct}{\mathsf{Struct}}
\newcommand{\bE}{\bar{E}}
\newcommand{\bbE}{\bar{E}'}
\renewcommand{\ge}{\geqslant}
\renewcommand{\le}{\leqslant}
\renewcommand{\geq}{\geqslant}
\begin{document}

\title{Reasonable Bounds for Combinatorial Lines of Length Three}

\author{Amey Bhangale\thanks{Department of Computer Science and Engineering, University of California, Riverside. Supported by the Hellman Fellowship award.}
	\and
	Subhash Khot\thanks{Department of Computer Science, Courant Institute of Mathematical Sciences, New York University. Supported by
		the NSF Award CCF-1422159, NSF CCF award 2130816, and the Simons Investigator Award.}
	\and
    Yang P. Liu\thanks{School of Mathematics, Institute for Advanced Study, Princeton, NJ. This material is based upon work supported by the National Science Foundation 
under Grant No. DMS-1926686}
    \and 
	Dor Minzer\thanks{Department of Mathematics, Massachusetts Institute of Technology. Supported by NSF CCF award 2227876 and NSF CAREER award 2239160.}}
\date{\vspace{-5ex}}
\clearpage\maketitle

\begin{abstract}
We prove that any subset $A \subseteq [3]^n$ with $3^{-n}|A| \ge (\log\log\log\log n)^{-c}$ contains a combinatorial line of length $3$, i.e., $x, y, z \in A$, not all equal, with $x_i=y_i=z_i$ or $(x_i,y_i,z_i)=(0,1,2)$ for all $i = 1, 2, \dots, n$. This improves on the previous best bound of $3^{-n}|A| \ge \Omega((\log^* n)^{-1/2})$ of [D.H.J. Polymath, Ann. of Math. 2012].
\end{abstract}

\pagenumbering{gobble}

\newpage

\setcounter{tocdepth}{2}
\tableofcontents

\normalsize
\pagebreak
\pagenumbering{arabic}

\section{Introduction}
The density Hales-Jewett theorem, proven by Furstenberg and Katznelson \cite{FK89,FK91}, asserts that for every positive integer $k$ and $\delta > 0$, that for sufficiently large $n$, a subset $S \subseteq \{0,1,\dots,k-1\}^n$ of size at least $\delta k^n$ (density at least $\delta$) must contain a \emph{combinatorial line} of length $k$. Here, we say that points $x^{(1)}, \dots, x^{(k)} \in S$ form a combinatorial line of length $k$ if not all $x^{(j)}$ are equal, and for each $i = 1, 2, \dots, n$, either $x^{(1)}_i = \dots = x^{(k)}_i$ or $(x^{(1)}_i, \dots, x^{(k)}_i) = (0, 1, \dots, k-1)$. Henceforth, we refer to this as the $\dhj[k]$ problem, and let $[k] := \{0,1,\dots,k-1\}$. This is the density version of the Hales-Jewett theorem \cite{HJ63}, which asserts for any positive integers $k$ and $r$, for all sufficiently large $n$ it holds that an $r$-coloring of $[k]^n$ contains a monochromatic combinatorial line. While the original proof of Furstenberg and Katznelson of the $\dhj[k]$ theorem was based on ergodic theory and thus gave ineffective bounds on $n$ in terms of $k$ and $\delta$, the Polymath project \cite{DHJ12} provided an elementary proof that gave effective bounds. For $k = 3$, which is the focus of this paper, \cite{DHJ12} proved that a subset $S \subseteq [3]^n$ with density $\delta$ contains a combinatorial line as long as $n \ge T(O(\delta^{-2}))$, where $T(m) = 2^{T(m-1)}$ and $T(0) = 1$, i.e., a tower of height $O(\delta^{-2})$. Put another way, a subset $S \subseteq [3]^n$ of density at least $\Omega((\log^* n)^{-1/2})$ contains a combiantorial line of length $3$.

The main result of this paper is an improvement of this bound to a tower of finite height.
\begin{theorem}
\label{thm:main}
There are constants $c, C > 0$ such that for all positive integers $n$ and subsets $|A| \subseteq [3]^n$ with $3^{-n}|A| \ge C(\log\log\log\log n)^{-c}$, there are $x, y, z \in A$, not all equal, forming a combinatorial line. In other words, for all $i = 1, \dots, n$, either $x_i = y_i = z_i$ or $(x_i,y_i,z_i) = (0,1,2)$.
\end{theorem}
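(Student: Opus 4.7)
The plan is a density increment iteration: starting from $A \subseteq [3]^n$ of density $\delta$, I would either locate a combinatorial line directly by a counting argument, or pass to a combinatorial subspace of $[3]^n$ on which $A$ has strictly larger density, and repeat. To achieve the four-iterated-log bound of the theorem, rather than Polymath's tower-type bound, the per-step density increment must be quantitatively strong: ideally multiplicative ($\delta \mapsto (1+c)\delta$ for an absolute $c > 0$) with at most polynomial loss in the ambient dimension $n$. Then $O(\log(1/\delta))$ iterations suffice, and the aggregated polynomial losses telescope into a constant number of exponentials, matching the four $\log$'s in the statement.

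To realize such a strong density increment, I would adapt the sifted-convolution / spectral-dichotomy framework underlying recent progress on three-term arithmetic progressions in $[N]$ to the combinatorial-line setting. Let $\Lambda(f,g,h) := \E_{(x,y,z)}[f(x)g(y)h(z)]$ denote the trilinear form averaging over uniformly random combinatorial lines in $[3]^n$, or, more robustly, over the equal-slices measure employed by Polymath. The core dichotomy states: either $\Lambda(\one_A,\one_A,\one_A) \ge (1-c)\delta^3$, in which case $A$ contains many combinatorial lines and a non-trivial one must exist, or some iterated convolution of $\one_A$ against a line kernel concentrates on a structured subset of $[3]^n$ where $A$ has boosted density.

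The main technical steps would then be: (i) a Bogolyubov-type lemma asserting that line-deficiency of $A$ forces convolutions of $\one_A$ against the line measure to correlate with the indicator of a structured set; (ii) an almost-periodicity / $L^p$-smoothing step upgrading this $L^1$-correlation into a genuine pointwise density boost of $A$ on the structured set; and (iii) a sifting lemma that locates a combinatorial subspace of $[3]^n$ of dimension only polynomially smaller than $n$ inside (or highly correlated with) the structured set. Iterating this dichotomy with careful parameter tracking would yield the claimed bound.

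The principal obstacle lies in step (iii). Arithmetic progressions in $[N]$ are organized by Fourier characters, so in the analogous framework for three-term progressions the structured sets are Bohr sets, and extracting a long progression inside a Bohr set is standard. Combinatorial subspaces of $[3]^n$ are far more rigid: Polymath's strategy for locating them uses equal-slices probabilistic arguments and partition-regular colorings, and the accumulated losses through these steps are ultimately what drive the tower bound. A new sifting step is required that cooperates with the spectral dichotomy while losing at most polynomially in dimension. I expect this to demand a hybrid of spread-measure / Shearer-type concentration arguments, a restriction lemma adapted to the wildcard structure of combinatorial lines, and very possibly an invocation of a lower-complexity Hales--Jewett-type statement (for instance, a product of Sperner arguments on a few coordinates at a time) as a black box inside the inner loop. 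Balancing these ingredients so that the multiplicative $\delta$-gain survives the dimension reduction is, I anticipate, the hardest single point of the argument.
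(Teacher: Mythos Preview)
Your proposal diverges substantially from the paper's argument, and the point you flag as ``the hardest single point'' is in fact the entire difficulty; the paper succeeds precisely by \emph{not} attempting it.

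First, a parameter sanity check: a genuinely multiplicative increment $\delta\mapsto(1+c)\delta$ with only polynomial dimension loss $n\mapsto n^{c'}$ terminates in $O(\log(1/\delta))$ rounds and requires only $n^{(c')^{O(\log(1/\delta))}}\gg 1$, i.e.\ $\delta\gg(\log n)^{-\Omega(1)}$. That is one logarithm, not four. So either your increment is weaker than multiplicative or your per-step dimension loss is worse than polynomial; the accounting in the proposal does not match the stated target.

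Second, and more fundamentally: the Kelley--Meka machinery you invoke rests on Fourier characters and Bohr sets, and there is no analogous character theory organising combinatorial lines in $[3]^n$. The paper makes this explicit: it replaces $U^2$/Fourier pseudorandomness by \emph{product pseudorandomness} (correlation with arbitrary $1$-bounded product functions under random restriction), and remarks that ``any form of Fourier pseudorandomness would not suffice''. Your steps (i)--(ii), phrased in spectral language, do not have an obvious carrier in this setting.

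Third, the paper avoids your step (iii) altogether. It does \emph{not} density-increment onto combinatorial subspaces. Following Shkredov's corners argument, it maintains $S\subseteq E_1\boxtimes E_2$ where $E_1\subseteq\{0,1\}^n$, $E_2\subseteq\{0,2\}^n$ and $E_1\boxtimes E_2$ is the ``disjoint product'' playing the role of a combinatorial rectangle. The increment is on the \emph{relative} density of $S$ inside $E_1\boxtimes E_2$, and is only additive-polynomial, $\alpha\mapsto\alpha+\Omega(\alpha^{15})$. The structure that drives it is an inverse theorem for $3$- and $4$-wise correlations over pairwise-connected distributions (imported from companion papers): large correlation forces one of the functions to fail product pseudorandomness. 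A separate \emph{uniformization} step then restores product pseudorandomness of $E_1,E_2$ at the cost of dropping $n$ to $n^{\exp(-\gamma^{-O(1)})}$ with $\gamma=\exp(-\exp(\alpha^{-O(1)}))$; this double exponential in the uniformization, composed with the $O(\alpha^{-15})$ iteration count, is exactly where the four logarithms come from.

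In short: your proposal identifies the right kind of obstacle but proposes to attack it head-on with tools that have no known analogue here. The paper's route is to change the ambient structure (disjoint products, not subspaces), change the pseudorandomness notion (product, not spectral), and import CSP inverse theorems to run a Shkredov-style double descent.
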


\subsection{Connections to Other Problems}
\label{subsec:other}

The density Hales-Jewett problem has several connections to problems in extremal and additive combinatorics.
The van der Waerden \cite{vdw27} theorem says that for any positive integer $k$, a finite coloring of the positive integers contains a monochromatic arithmetic progression of length $k$ (a $k$-AP). Szemer\'{e}di's theorem \cite{Sz75} is the density analogue of van der Waerden's theorem, and says that for sufficiently large $n$ in terms of $\delta$ and $k$, that any density $\delta$ subset of $\{1, \dots, n\}$ contains a $k$-AP. In fact, Szemer\'{e}di's theorem follows from the $\dhj[k]$ theorem in the following way. Encode a point $x \in [k]^n$ as $\Pi(x) := \sum_{i=1}^n x_i k^i$. Now note that a combinatorial line corresponds exactly to a $k$-AP.
Another related problem is the corners problem, which asks to find in a dense subset of $[n] \times [n]$, three points that form an axis-aligned isosceles triangle: $(x, y)$, $(x+d, y)$, $(x, y+d)$. This was solved by Ajtai and Szemer\'{e}di \cite{AS74}. This can also be deduced from the $\dhj[k]$ theorem. Even more generally, the multi-dimensional Szemer\'{e}di theorem states that one can find even more complicated such patterns in dense subsets of $[n]^d$. This was also initially proven by Furstenberg and Katznelson \cite{FK78} by ergodic methods, but now a combinatorial proof is known via reduction to the $\dhj[k]$ theorem. The fact that so many problems can be reduced to $\dhj[k]$ may partly explain the difficulty in obtaining improved bounds for it.

While Szemer\'{e}di's original proof did not achieve great quantitative bounds, recent works have introduced several new tools broadly based on Fourier analysis that have greatly improved our understanding of Szemer\'{e}di's theorem, while providing significantly improved quantitative bounds. In \cite{Gowers98,Gowers01}, Gowers introduced higher-order Fourier analysis and the Gowers $U^k$-norms, which have played a crucial role in establishing analytic proofs and improved bounds for special cases of the multidimensional Szemer\'{e}di theorem and polynomial Szemer\'{e}di theorem \cite{Shk05,Shk06,GTZ11,GTZ12,GT12,GT17,Peluse18,Peluse19,Peluse20,HLY20,PP22,Peluse24,LSS23,Leng24,LSS24,LSS24b}, as well as several applications to number theory and beyond \cite{Green05prime,GT08,GT10}.

In terms of lower bounds, the best known lower bound for the $\dhj[k]$ problem is density $\Omega(\exp(-(\log n)^{1/\lceil \log_2 k \rceil}))$ \cite{DHJ09}, using ideas from Behrend's lower bound construction for $3$-AP free sets \cite{B46}. For the $k = 3$ case of Szemer\'{e}di's theorem, in a remarkable recent result, Kelley and Meka \cite{KM23} achieved an upper bound that nearly matches Behrend's lower bound. The bound has been further refined by Bloom and Sisask \cite{BS23}.

\subsection{Connections to the Study of CSPs}
\label{intro:csp}

Motivated by applications to understanding the approximability of satisfiable constraint satisfaction problems (CSPs), Bhangale, Khot, and Minzer \cite{BKM1,BKM2,BKM3,BKM4,BKM5,BKM3ap}, studied the following very general problem. Consider finite alphabets $\A_1, \dots, \A_k$ and a distribution $\mu$ supported on a subset of $\A_1 \times \dots \times \A_k$. If $1$-bounded functions $f_i: \A_i^n \to \bbC$ (so, $|f_i(x)| \le 1$ for all $x \in \A_i^n$) for $i = 1, \dots, k$ satisfy
\begin{align}
\left|\E_{(x_1,\dots,x_k) \sim \mu^{\otimes n}}\left[f_1(x_1) \dots f_k(x_k) \right] \right| \ge \eps, \label{eq:corr}
\end{align}
then what structure can we deduce about the functions $f_i$? This arose in the study of CSPs because a CSP is formally just a subset of $\A_1 \times \dots \times \A_k$, which may correspond to the support of the distribution $\mu$ being considered.

This generality also captures several problems in additive combinatorics. To see the connection with $\dhj[k]$, one can consider the setting where $\A_1 = \dots = \A_k = [k]$ and $\mu$ is supported on $(x,\dots,x)$ for $x \in [k]$ and $(0,1,\dots,k-1)$, and $f_i$ are indicators of a set $S \subseteq [k]^n$. Then \eqref{eq:corr} exactly measures the density of combinatorial lines with respect to the measure $\mu$. Similarly, many other problems in additive combinatorics can be captured in the same way.

In \cite{BKM4}, the following structural result is proved. Let $k=3$ and $\mu$ be pairwise-connected (the projection of the support of $\mu$ to any two coordinates forms a connected graph, see \cref{def:pairwise}), and $\mu$ has no Abelian embeddings into $\Z$. In this case, there is a finite group $H$ with size depending only on $\mu$, such that for each $i = 1, 2, 3$, $f_i$ correlates to some Fourier character over $H$ times a low-degree function. Equivalently, after randomly setting some coordinates of $f_i$ (see \cref{def:rr}), $f_i$ correlates to a Fourier character over $H$ on the remaining coordinates. This was used to give reasonable bounds for restricted $3$-APs over $\F_p^n$, whose common difference lies in the set $\{0,1,2\}^n$ always.

In the companion paper \cite{csp6}, we extend the result of \cite{BKM4} by removing the assumption that $\mu$ has no Abelian embeddings into $\Z$, and prove that each function $f_i$, after random restriction, correlates to a product function (see \cref{thm:3csp}) as long as $\mu$ is pairwise-connected. Similarly, this can be applied to give reasonable bounds for restricted $3$-APs over $\F_p^n$, but now with common difference lying in the set $\{0,1\}^n$. These results do not immediately imply any bounds for the $\dhj[3]$ problem, because the distribution $\mu$ supported on $(0,0,0)$, $(1,1,1)$, $(2,2,2)$, $(0,1,2)$ is not pairwise-connected.

To prove \cref{thm:main}, we take inspiration from the Shkredov's approach to the corners problem \cite{Shk06}, as well as the combinatorial proof of the $\dhj[k]$ theorem \cite{DHJ12}. As we discuss later, interpreting the corners problem (say over $\F_2^n \times \F_2^n$) in the context of \eqref{eq:corr} also produces a distribution $\mu$ which is not pairwise-connected. However, Shkredov proves strong bounds for the corners problem by a density increment strategy. Additionally, the Polymath paper \cite{DHJ12} gives analogies for several ingredients in the corners proof in the context of $\dhj[3]$ (though their argument is based on the corners proof of Ajtai and Szemer\'{e}di and not of Shkredov). At a high level, we manage to combine the new structural results for pairwise-connected correlations from the companion papers of the authors \cite{csp6,csp7} (see \cref{thm:3csp,thm:4csp}) with Shkredov's corners proof and \cite{DHJ12} to give reasonable bounds for the $\dhj[3]$ problem.

\paragraph{$\dhj[k]$ for $k\geq 4$:} 
it is conceivable that an approach in
the spirit of the current paper could 
be used to establish reasonable bounds 
for $\dhj[k]$ for general $k$. Such 
approach would require, at the very least, analogs of the results in~\cite{csp6,csp7} for pairwise-connected $k$-ary CSPs, 
which we plan to study in future works. 

\subsection{Preliminaries}
\label{subsec:prelim}

\paragraph{General notation.} Let $[n] = \{1, \dots, n\}$. For a subset $I \subseteq [n]$ we write $\bar{I} = [n] \setminus I$. We let $\bbC$ denote complex numbers. We let $\supp(\mu)$ denote the support of a distribution $\mu$.
For a finite set $\A$ we write $x \sim \A^n$ to denote sampling $x$ uniformly randomly from $\A^n$.

\paragraph{Connectivity.} We define what it means for a distribution to be pairwise-connected. Let $\mu_{ij}$ be the restriction of the distribution $\mu$ to the $(i, j)$ coordinates.
\begin{definition}[Pairwise-connected]
\label{def:pairwise}
We say that a distribution $\mu$ on $\A_1 \times \dots \times \A_k$ is pairwise-connected if for all $1 \le i < j \le k$ the support of $\mu_{ij}$ forms a connected graph over vertex set $\A_i \cup \A_j$.
\end{definition}

\paragraph{Random restriction.} All our methods and results heavily rely on random restrictions. A random restriction takes a subset of the coordinates, and randomly fixes their values. Below, $I \sim_{1-\alpha} [n]$ means that $I$ is a random subset of $[n]$, such that each $i \in [n]$ is included in $I$ with probability $1-\alpha$, and $z \sim \nu^I$ means that $z \in \A^I$ is such that each $z_i$ is i.i.d.~and distributed according to $\nu$.
\begin{definition}[Random restriction]
\label{def:rr}
Let $\mu$ be a distribution over $\A^n$ and let $\mu = (1-\alpha)\nu + \alpha \mu'$ for distributions $\nu, \mu'$. Then for a function $f: \A^n \to \bbC$, $I \sim_{1-\alpha} [n]$, and $z \sim \nu^I$, we define the random restriction $f_{I\to z}: \A^{[n] \setminus I} \to \bbC$ as $f_{I \to z}(x) := f(x, z)$.
\end{definition}

\section{Proof Outline}
\label{sec:overview}

\subsection{Shkredov's Corners Proof}
\label{subsec:corner}

We start by recalling the key ideas behind Shkredov's bound on sets that do not contain a corner \cite{Shk05,Shk06}. For simplicity we consider the finite field version of the problem, where we have a set $S \subseteq \F_2^n \times \F_2^n$. A corner consists of three points $(x,y)$, $(x+d, y)$, $(x,y+d) \in S$ such that $d \neq 0$. The standard approach to such problems is the density increment method pioneered by Roth \cite{Roth53} to solve the $k = 3$ case of Szemer\'{e}di's theorem. At a high level, this method tries to prove that either the set $S$ has about the expected number of corners, or the set $S$ has larger density on some structured subset (for example, a subspace). Then one can pass to this structured object and repeat the argument.

One type of structure that arises in the corners problem is that of combinatorial rectangles: this was observed by Ajtai and Szemer\'{e}di \cite{AS74}.
Let $\mu(S)$ be the density of $S$. Then the number of corners in $S$ is given by $\Pr_{x,y,d \in \F_2^n}\left[(x,y), (x, y+d), (x+d, y) \in S \right]$. When can this value deviate far from $\mu(S)^3$, which is what it would be if the events $(x, y) \in S$, $(x, y+d) \in S$, $(x+d, y) \in S$ were truly independent? A natural example is when $S \subseteq E_1 \times E_2$ for some $E_1 \subseteq \F_2^n$, $E_2 \subseteq \F_2^n$. Let $\mu' := \mu(S)/\mu(E_1 \times E_2)$ be the relative density of $S$ within $E_1 \times E_2$.
Then heuristically, the probability that $(x, y), (x, y+d), (x+d, y) \in S$ is approximately $\mu(E_1)^2\mu(E_2)^2(\mu')^3$, which is much larger than $\mu(S)^3 = \mu(E_1)^3\mu(E_2)^3(\mu')^3$. This heuristic holds as long as:
\begin{enumerate}
    \item The probability that $x, x+d \in E_1$ and $y, y+d \in E_2$ is approximately $\mu(E_1)^2 \mu(E_2)^2$.
    \item The probability that $(x, y)$, $(x, y+d)$, $(x+d, y) \in S$ conditioned on item 1 is about $(\mu')^3$.
\end{enumerate}
To complete the argument, one must understand the precise conditions under which these items hold. It is not too difficult to argue that item 1 holds as long as $E_1$ and $E_2$ are $U^2$-pseudorandom. In other words, all their nontrivial Fourier coefficients are small. Also, a (nontrivial) sequence of Cauchy-Schwarz manipulations proves that item 2 holds if $S$ does not admit a density increment onto a combinatorial subrectangle $E_1' \times E_2' \subseteq E_1 \times E_2$ of nonnegligible size.

From here, Shkredov's proof proceeds via a ``double descent'' kind of argument. The goal is to maintain $S \subseteq E_1 \times E_2$ such that the relative density of $S$ within $E_1 \times E_2$ increases whenever there are no corners.
Given such $S \subseteq E_1 \times E_2$, one first tries to find a sub-instance where $E_1$ and $E_2$ are $U^2$-pseudorandom. This is referred to as the ``uniformizing'' step. At a high level, this is done by partitioning $E_1$ and $E_2$ among subspaces (into smaller combinatorial rectangles) on which they have large Fourier coefficients, until most pieces in the partition are $U^2$-pseudorandom. By an averaging argument, there exists a piece on which $S$ still has large relative density.

Now, Shkredov argues that either the number of corners is approximately equal to what the heuristic above says, or $S$ admits a relative density increment onto a combinatorial subrectangle $E_1' \times E_2'$. This is proven by several applications of the Cauchy-Schwarz inequality. Of course, now $E_1', E_2'$ are not necessarily $U^2$-pseudorandom anymore, so one has to uniformize again, and repeat the argument.

\subsection{Dictionary between Corners and $\dhj[3]$}
\label{subsec:dict}

The work \cite{DHJ12} gives a combinatorial proof of the density Hales-Jewett theorem by drawing analogies from the corners proof of Ajtai and Szemer\'{e}di \cite{AS74}. Here we describe how each aspect of the corners proof described in \cref{subsec:corner} can be adapted to the $\dhj[3]$ setting.

One immediate difference in the $\dhj[3]$ setting is that it is not clear under what measure we should be counting triples $(x, y, z) \in S$ that form a combinatorial line. \cite{DHJ12} introduces the \emph{equal slices measure} under which the counts of combinatorial lines is sensibly defined. In this work, we take a slightly different approach that is similar in spirit. Regardless, precisely defining the equal slices measure or our variant is not important for this outline -- it is only important that there is some sensible way to count combinatorial lines.

Let $S \subseteq [3]^n$ be the set on which we wish to find combinatorial lines. \cite{DHJ12} proves that if the count of combinatorial lines is far off from what is expected, then $S$ has increased relative density on the intersection of a $02$-insensitive and $01$-insensitive set (see also a simpler proof from~\cite{DKT14} that avoids the equal slice measure and works with the uniform distribution throughout). Here, a $02$-insensitive set is a subset $E_1 \subseteq [3]^n$ satisfying that if $x, y \in [3]^n$ agree on all $i \in [n]$ where $x_i = 1$ or $y_i = 1$, then $x \in E_1$ if and only if $y \in E_1$. In other words, only the locations of the $1$'s in $x$ determines whether $x \in E_1$. A $01$-insensitive set (which we call $E_2$) is defined similarly. We refer to the intersection of $E_1$ and $E_2$ as their \emph{disjoint product} (see \cref{def:disjprod}) and denote it as $E_1 \boxtimes E_2$. This is the first entry of the dictionary: $E_1 \boxtimes E_2$ corresponds to the combinatorial rectangles of the corners proof.

From here, \cite{DHJ12} argues that $E_1 \boxtimes E_2$ can be split into combinatorial subspaces (i.e., sets isomorphic to $[3]^{n'}$ for $n' \le n$), and thus one obtains a density increment of $S$ onto a combinatorial subspace, and the argument can be repeated. In \cite{DHJ12}, $n' \le \log n$ due to an application of the multidimensional Szemer\'{e}di theorem (for $2$ points), and thus the final density bound achieved on $S$ is $(\log^* n)^{-c}$. Similarly, our proof will also use combinatorial subspaces, but for larger $n' \ge n^c$.

\subsection{Pseudorandomness Notion}
\label{subsec:pseudo}
The main missing ingredient from fully translating Shkredov's proof to the $\dhj[3]$ setting is the correct notion of pseudorandomness. It turns out that the correct notion is what we call \emph{product-pseudorandomness} (see \cref{def:prodpseudo}). While $U^2$ pseudorandomness of a function $f$ says that $|\l f, \prod_{i=1}^n \chi_i\rr|$ is small for any Fourier characters $\chi_1, \dots, \chi_n$, product function pseudorandomness is a stronger notion which says that $|\l f, \prod_{i=1}^n P_i \rr|$ is small for any choice of $1$-bounded functions $P_1, \dots, P_n: [3] \to \bbC$, where $1$-bounded means that $|P_i(x)| \le 1$ for all $x \in [3]$.
In fact, in our definition we also require that random restrictions of $f$ down to even $n^{1/4}$ coordinates correlate to product functions $\prod_{i=1}^n P_i$ with only negligible probability.
This notion of pseudorandomness is motivated by the inverse theorems in \cite{csp6,csp7} (see \cref{thm:3csp,thm:4csp}) which say that if functions $f, g, h$ have large $3$-wise correlation over $\mu^{\otimes n}$ for a pairwise-connected distribution $\mu$, then each of $f$, $g$, $h$ is not product-pseudorandom.

It remains to discuss how to put all these pieces together. Similarly to the corners proof, our goal is to increment the relative density of $S$ within $E_1 \boxtimes E_2$ while there is no combinatorial line. 
We design a uniformization algorithm for our notion of product function pseudorandomness -- this essentially amounts to understanding how to density increment on sets that correlate to some product function. This is done in \cref{sec:uniform}, and is based on the density increment for restricted 3-progressions in \cite{BKM3ap,csp6}.
Finally we argue that if the number of combinatorial lines (under the proper measure) deviates from what is expected, then $S$ has a relative density increment onto a ``subrectangle'' $E_1' \boxtimes E_2' \subseteq E_1 \boxtimes E_2$. This is done in \cref{sec:equal,sec:ineq}.

It is worth discussing where product pseudorandomness enters the picture, more precisely. For example, we wish to argue that $E_1 \boxtimes E_2$ behaves like a combinatorial rectangle. One way to formalize this is that we want to have $\mu(E_1 \boxtimes E_2) \approx \mu(E_1)\mu(E_2)$, i.e., that the events $x \in E_1$ and $x \in E_2$ for $x \in [3]^n$ behave independently. In \cref{lemma:me1e2} we argue that product pseudorandomness of $E_1$ and $E_2$ suffices for this. In addition, throughout the proof we require more complex correlations of a similar type (e.g., \cref{lemma:eyz,lemma:4cspy,lemma:omega,lemma:smallmeasure}) which all leverage product pseudorandomness. We believe that any form of Fourier pseudorandomness would not suffice to prove these statements.

\subsection{Choosing Parameters}
\label{sec:param}

Throughout the paper we will use the following parameters and relationships between them.

The parameter $\alpha$ will be the relative density of the set $S$ within the disjoint product $E_1 \boxtimes E_2$.
The density of $E_1$ and $E_2$ within $[3]^n$ will be $\delta_1$ and $\delta_2$ respectively. These will decrease during density increments, but will always stay above $\exp(-\alpha^{-C})$ (say, for $C = 20$).

The parameter $\gamma$ will be a pseudorandomness parameter. We require that $\log(1/\gamma)^{-1} < (\alpha\delta_1\delta_2)^C$ for a large constant $C$. Thus, $\gamma \le \exp(-\exp(\alpha^{-C}))$.

During a uniformization step, $n$ will drop to about $n^{\exp(-\gamma^{-C})}$ for a constant $C$ (see \cref{thm:uniform}). This informs our choice of $\alpha$ ultimately, because we need
\[ \exp(-\gamma^{-C}) \le (\log n)^c, \enspace \text{ so } \alpha \ge \Omega((\log\log\log\log n)^{-c}), \] by our choice of $\gamma$.

Finally, we will set the parameter $\eta = \exp(-\sqrt{\log n})$, where $n$ is the original dimension. Throughout the whole procedure, $\eta$ will be much smaller than any of $\alpha, \delta_1, \delta_2$, and $\gamma$. The parameter $\eta$ arises sometimes when we are dealing with distributions $\nu$ on $[3]$ satisfying $|\nu(x) - 1/3| \le \eta/\sqrt{n}$. In other words, $\dtv(\mu^{\otimes n}, \nu^{\otimes n}) \le O(\eta)$, where $\mu$ is the uniform distribution on $[3]$.

\section{Correlations to Product Functions}
\label{sec:correlation}

\subsection{Product Pseudorandomness}
\label{subsec:prodpseudo}
We start by formally introducing the main pseudorandomness notion considered in this paper.
\begin{definition}[$(n',\gamma)$-product pseudorandomness]
\label{def:prodpseudo}
Let $\A$ be a finite set, $\mu$ be a distribution on $\A$, and $n$ a positive integer. For $n' \le n$ and $\gamma > 0$ we say that a $1$-bounded function $f: \A^n \to \bbC$ is $(n',\gamma)$-product pseudorandom if for any $\delta \in [n'/n, 1]$, the probability that a random restriction of $f$ down to $\bar{I} \sim_{\delta} [n]$ $\gamma$-correlates to a product function is less than $\gamma$. Precisely,
\[ \Pr_{I \sim_{1-\delta} [n], z \sim \mu^{\otimes I}}\left[\exists \{P_i: \A \to \bbC, \|P_i\|_\infty \le 1\}_{i \in \bar{I}} \enspace \text{ with } \enspace \Big|\E_{x \sim \mu^{\otimes \bar{I}}}\Big[f_{I\to z}(x) \prod_{i \in \bar{I}} P_i(x_i) \Big]\Big| \ge \gamma \right] < \gamma. \]
\end{definition}
Informally, another interpretation of product pseudorandomness is that $f$ does not correlate to a function of the form $LP$ where $L$ has \emph{degree} at most $\O(n/n')$, and $P$ is a product function. This can potentially be formalized by proving a restriction inverse theorem akin to \cite{BKM3,BKM4}, but it is more convenient to just work with \cref{def:prodpseudo} in the present work.

It is worth remarking that one can change the measure that a function $f$ is product pseudorandom with respect to by applying random restrictions. Informally, if $f$ is product pseudorandom with respect to $\mu$ and $\mu = \beta \nu + (1-\beta) \nu'$, then applying a random restriction $I \sim_{1-\beta} [n]$ and $z \sim (\nu')^{\otimes I}$, the function $f_{I \to z}$ will be product pseudorandom against $\nu$. A more formal and general statement appears in \cref{lemma:projpseudo}.

\subsection{Inverse Theorems for CSPs}
\label{subsec:csp}

In this section we discuss a significant generalization of the $U^2$ Gowers-norm inverse theorem that we require for our proof. The following theorem was proven in \cite[Theorems 1, 2]{csp6}.
\begin{theorem}
\label{thm:3csp}
Let $\mu$ be a pairwise-connected distribution over $\A \times \B \times \C$ for finite sets $\A, \B, \C$ on which the probability of each atom is at least $\alpha$. If $1$-bounded functions $f: \A^n \to \bbC$, $g: \B^n \to \bbC$, $h: \C^n \to \bbC$ satisfy that
\[ \left|\E_{(x,y,z) \sim \mu^{\otimes n}}[f(x)g(y)h(z)] \right| \ge \eps, \] then there is a constant $\gamma := \gamma(\alpha,\eps) > 0$ and distribution $\nu$ such that $\mu = (1-\gamma)\nu + \gamma U$, where $U$ is uniform over $\A$, such that:
\[ \Pr_{I \sim_{1-\gamma} [n], z \sim \nu^I}\left[\exists \{P_i: \A \to \bbC, \|P_i\|_\infty \le 1\}_{i \in \bar{I}} \enspace \text{ with } \enspace \Big|\E_{x \sim \A^{\bar{I}}}\Big[f_{I\to z}(x) \prod_{i \in \bar{I}} P_i(x_i) \Big]\Big| \ge \gamma \right] \ge \gamma. \]
Quantitatively, $\gamma(\eps) \ge \exp(-\eps^{-O_{\alpha}(1)})$.
\end{theorem}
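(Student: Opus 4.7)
The plan is to follow the strategy of the companion paper~\cite{csp6}, from which this theorem is quoted. At a high level, the proof proceeds in three stages: reduce the 3-wise correlation to a structural statement about $f$ alone via iterated Cauchy--Schwarz, extract correlation with a coordinatewise character of a finite abelian group of bounded size via a pairwise-connected inverse theorem, and finally convert character correlation into product-function correlation via random restriction.

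For the first stage, starting from $|\E_{(x,y,z) \sim \mu^{\otimes n}}[f(x)g(y)h(z)]| \ge \eps$, apply Cauchy--Schwarz iteratively --- first doubling $(y,z)$ to obtain a 4-wise correlation of $g \otimes \overline{g}$ against $h \otimes \overline{h}$, and then collapsing further to a bilinear correlation of $f \otimes \overline{f}$ against a derived tensor under some induced coupling on $\Sigma \times \Sigma$. The pairwise-connectedness of $\mu$ is what guarantees that the induced couplings obtained at every step remain non-degenerate, so that no information is lost and the associated Markov operator retains a spectral gap that depends only on $\alpha$.

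For the second stage, a Fourier-analytic inverse theorem on this pairwise-connected bilinear correlation yields that $f$ correlates with a coordinatewise character $\prod_i \chi_i(x_i)$, where each $\chi_i$ is a character of some finite abelian group $H = H(\alpha, \mu)$ of bounded size extracted from the algebraic structure of $\mu$. For the third stage, apply a random restriction $I \sim_{1-\gamma}[n]$ with $z \sim \nu^I$, where $\nu$ is chosen so that $\mu = (1-\gamma)\nu + \gamma U$ with $U$ uniform over $\Sigma$. Any coordinatewise character $\prod_{i \in \bar{I}} \chi_i(x_i)$ is automatically a product function on the unfixed coordinates, so a Markov-type averaging argument guarantees that with probability at least $\gamma$ the restricted $f_{I \to z}$ has $\gamma$-correlation with such a product function.

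The main obstacle is the second stage, where one must rigorously handle the algebraic structure of pairwise-connected distributions that may admit various Abelian embeddings, and track the emergence of the group $H$. This is the technical heart of~\cite{csp6}: running the relevant induction on the embedding complexity of $\mu$ squares the correlation parameter at each step and at each application of Cauchy--Schwarz, which accumulates to the stated quantitative bound $\gamma \ge \exp(-\eps^{-O_\alpha(1)})$.
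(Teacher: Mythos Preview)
This theorem is not proved in the present paper; it is quoted as a black box from~\cite[Theorems~1,~2]{csp6} (see the sentence immediately preceding the statement). There is therefore no proof here to compare your proposal against.

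As for whether your sketch accurately reflects~\cite{csp6}: your Stage~2 is closer to the earlier~\cite{BKM4} than to~\cite{csp6}. The present paper explicitly distinguishes the two: \cite{BKM4} obtains correlation with a Fourier character over a finite group $H$ of bounded size, but only under the additional hypothesis that $\mu$ has no Abelian embedding into~$\Z$; the point of~\cite{csp6} is precisely to \emph{remove} that hypothesis, and the conclusion is then correlation with a general $1$-bounded product function rather than a character. When $\mu$ does admit embeddings into~$\Z$, there is no finite group $H$ of size depending only on $\mu$ that captures the structure, so your ``extract a bounded group $H$ and a coordinatewise character'' step cannot be what the general argument does. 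The obstacle you flag in the final paragraph is genuine, but its resolution in~\cite{csp6} does not proceed via a finite group; the output is a product function directly.
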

Put another way, the functions $f$, $g$, $h$ are not product pseudorandom.
To see the connection to the $U^2$ inverse theorem, one can consider when $\A = \B = \C = H$ for some Abelian group $H$ and $\mu$ is uniform over $(x, x+d, x+2d)$ for $x, d \in H$. 

In this present work, we require an inverse theorem for certain $4$-ary distributions. This is done by reducing to the $3$-ary inverse theorem in \cref{thm:3csp}. The analogy is that the $U^2$ Gowers norm is over $4$-ary distributions $(x, x+a, x+b, x+a+b)$, but its inverse structure behaves the same as for $3$-APs. To formally state this theorem, we need to define what it means for a distribution to be connected.

\begin{definition}
\label{def:embed}
A subset $S \subseteq \A_1 \times \dots \times \A_k$ is said to be connected if for every $a = (a_1, a_2, \ldots, a_k)\in S$ and $b = (b_1, b_2, \ldots, b_k) \in S$, there exists a sequence $(a, c^{(1)}, \ldots, c^{(t)}, b)$ for some $t\geq 1$ such that $c^{(i)}\in S$ for all $i$ and every consecutive pair of tuples in the sequence differs in at most one coordinate. A distribution $\mu$ on $\A_1 \times \dots \times \A_k$ is said to be connected if the support of $\mu$ is connected.
\end{definition}
The following theorem is from \cite[Lemma 1.5]{csp7}.

\begin{theorem}
\label{thm:4csp}
Let $\mu$ be a distribution over $\A_1 \times \A_2 \times \A_3 \times \A_4$ such that the probability of each atom is at least $\alpha$ and the distribution $\mu_{123}$ is connected. There is a constant $\gamma := \gamma(\alpha,\eps) > 0$ such that if $1$-bounded functions $f_i: \A_i^n \to \bbC$ for $i = 1, 2, 3, 4$ satisfy 
\[ \left|\E_{(x_1,x_2,x_3,x_4) \sim \mu^{\otimes n}}[f_1(x_1)f_2(x_2)f_3(x_3)f_4(x_4)] \right| \ge \eps, \]
then a random restriction of $f_1$ correlates to a product function, i.e.,
\[ \Pr_{I \sim_{1-\gamma} [n], z \sim \mu_1^I} \left[\exists \{P_i: \A_1 \to \bbC, \|P_i\|_\infty \le 1\}_{i \in \bar{I}} \enspace \text{ with } \enspace \Big|\E_{x \sim \mu_1^{\bar{I}}}\Big[(f_1)_{I\to z}(x) \prod_{i \in \bar{I}} P_i(x_i) \Big]\Big| \ge \gamma \right] \ge \gamma. \]
Quantitatively, $\gamma \ge \exp(-\eps^{-O_{\alpha}(1)})$. 
\end{theorem}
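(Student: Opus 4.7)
The plan is to reduce \cref{thm:4csp} to the 3-ary inverse theorem (\cref{thm:3csp}) via a Cauchy--Schwarz doubling in the $x_4$ variable. Writing
\[
\E_{\mu^{\otimes n}}[f_1 f_2 f_3 f_4] = \E_{x_4}[f_4(x_4)\, G(x_4)]
\]
with $G(x_4) := \E_{(x_1,x_2,x_3) \mid x_4}[f_1(x_1) f_2(x_2) f_3(x_3)]$, the bound $\|f_4\|_2 \le 1$ and Cauchy--Schwarz give $\|G\|_2 \ge \eps$. Expanding the squared norm and integrating out $x_4$ yields
\[
\eps^2 \le \E_{\tilde\mu^{\otimes n}}\big[F_1(\tilde x_1)\, F_2(\tilde x_2)\, F_3(\tilde x_3)\big],
\]
where $\tilde\mu$ on $\A_1^2 \times \A_2^2 \times \A_3^2$ is the distribution obtained by sampling $d \sim \mu_4$ and then two independent copies $(x_1,x_2,x_3), (x_1',x_2',x_3') \sim \mu_{123 \mid x_4 = d}$, and $F_i(a,a') := f_i(a) \overline{f_i(a')}$ is $1$-bounded. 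A direct computation shows that the first-coordinate marginal of $\tilde\mu_1$ equals $\mu_1$ and that every atom of $\tilde\mu$ has probability at least $\alpha^{O(1)}$.

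The next step is to verify that $\tilde\mu$ is pairwise-connected on its support; this is where the hypothesis that $\mu_{123}$ is connected in the sense of \cref{def:embed} enters crucially. I would argue by path-lifting: an adjacency-path in $\supp(\mu_{123})$ (changing one coordinate at a time), combined with a fixed auxiliary tuple and the fact that every tuple in $\supp(\mu_{123})$ lies in $\pi_{123}(\supp(\mu \mid x_4 = d))$ for some $d$, should lift to a walk connecting any two edges in each bipartite support graph $\supp(\tilde\mu_{ij})$. Once pairwise-connectedness and the minimum-mass condition are in place, \cref{thm:3csp} applies to $(\tilde\mu, F_1, F_2, F_3)$ and concludes that $F_1$ is not $(n', \gamma')$-product pseudorandom with respect to $\tilde\mu_1$, for $\gamma' \ge \exp(-\eps^{-O_\alpha(1)})$.

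Finally, I would transfer the doubled non-pseudorandomness of $F_1$ back to $f_1$. Given a restriction $(I, (z, z'))$ for which $(F_1)_{I \to (z,z')}$ has $\gamma'$-correlation with some product function $\prod_{i \in \bar I} Q_i(x_i, x_i')$ on $(\A_1^2)^{\bar I}$, the key observation is that for each fixed $x' \in \A_1^{\bar I}$ the map $T_{x'}(x) := \prod_{i \in \bar I} Q_i(x_i, x_i')$ is a product function in $x$. Expanding
\[
\gamma' \le \big|\E_{x'}\big[\overline{f_{I \to z'}(x')}\, \langle f_{I \to z}, T_{x'}\rangle\big]\big|
\]
and applying Cauchy--Schwarz over $x'$ gives $(\gamma')^2 \le \E_{x'}|\langle f_{I \to z}, T_{x'}\rangle|^2$, so a positive fraction of $x'$ witnesses correlation of $f_{I \to z}$ to a product function; marginalizing over $z'$ then yields a $(\gamma')^{O(1)}$-fraction of $(I, z)$ on which $f_{I \to z}$ correlates to a product function, as required. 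The main obstacle is the pairwise-connectedness verification: since connectedness of $\mu_{123}$ is strictly weaker than each conditional $\mu_{123 \mid x_4 = d}$ being connected, the lifted paths must be chained across different $x_4$-slices, and in corner cases one may need to partition $\tilde\mu_{ij}$ into its (finitely many) connected components, extract by averaging a single component carrying an $\eps^{O(1)}$ fraction of the correlation, and apply \cref{thm:3csp} to that component.
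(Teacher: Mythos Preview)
The paper does not prove \cref{thm:4csp}; it imports the result from \cite[Lemma~1.5]{csp7} with only the remark that the proof proceeds ``by reducing to the 3-ary inverse theorem in \cref{thm:3csp}'' in analogy with the $U^2$ norm. Your Cauchy--Schwarz doubling in $x_4$ is precisely that reduction, so the global strategy is the intended one, and your transfer of product-correlation from $F_1$ back to $f_1$ is essentially correct (modulo a routine measure change---\cref{thm:3csp} outputs correlation over the uniform measure on $\A_1$ with restriction distributed as the first marginal of some $\tilde\nu$, whereas \cref{thm:4csp} wants both over $\mu_1$; this is repaired by one further restriction as in \cref{lemma:rrprod}, using $\mu_1(a)\ge\alpha$).

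The gap is in the connectedness step. Your fallback---partitioning $\supp(\tilde\mu_{ij})$ into components, averaging, and applying \cref{thm:3csp} to one component---does not work: pairwise-connectedness must hold for all three pairs simultaneously, conditioning $\tilde\mu^{\otimes n}$ on a component of a single $\tilde\mu_{ij}$ does not yield a product distribution $\hat\mu^{\otimes n}$ to which \cref{thm:3csp} applies, and there is no reason any one component carries an $\eps^{O(1)}$ share of the correlation. You must establish pairwise-connectedness of $\tilde\mu$ outright. The path-lifting idea is the right one; a clean organization is: (i) the diagonal edges $((a_1,a_1),(a_2,a_2))$ for $(a_1,a_2)\in\supp(\mu_{12})$ form a copy of the bipartite graph $\supp(\mu_{12})$ inside $\supp(\tilde\mu_{12})$, and projecting a one-coordinate path in $\supp(\mu_{123})$ shows this copy is connected; (ii) every edge of $\supp(\tilde\mu_{12})$ lies in some slice $H_d\times H_d$ with $H_d=\pi_{12}(\supp\mu_{123\mid 4=d})$, and a synchronized-walk argument for categorical products of bipartite graphs, together with the inter-slice gluing from (i), connects it to a diagonal edge. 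Step (ii) is where the connectedness hypothesis on $\mu_{123}$ actually does the work, and your one-sentence sketch does not establish it.
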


\section{Inductive Structure}
\label{sec:struct}

Let $\mu$ be a measure on $[3]^n$. Let $\pi_1: \{0, 1, 2\}^n \to \{0, 1\}^n$ be defined as $\pi_1(x)_i = 1$ if and only if $x_i = 1$. Similarly, $\pi_2: \{0, 1, 2\}^n \to \{0, 2\}^n$ is defined as $\pi_2(x)_i = 2$ if and only if $x_i = 2$.

The measure of $E_1$ is denoted as
\[ \mu(E_1) := \mu(\{x \in [3]^n : \pi_1(x) \in E_1\}). \] $\mu(E_2)$ is defined similarly. By abuse of notation, we let $\mu$ denote the distribution on $\{0, 1\}^n$ (respectively $\{0,2\}^n$) distributed as $\pi_1(x)$ (resp. $\pi_2(x)$) for $x$ distributed according to $\mu$ in $[3]^n$.

We now define the disjoint product of subsets $E_1 \subseteq \{0,1\}^n$ and $E_2 \subseteq \{0,2\}^n$ which serves as our analogy to combinatorial rectangles.
\begin{definition}[Disjoint product]
\label{def:disjprod}
Let $E_1 \subseteq \{0, 1\}^n$ and $E_2 \subseteq \{0, 2\}^n$. The disjoint product $E_1 \boxtimes E_2 \subseteq \{0,1,2\}^n$ is defined as:
\[ E_1 \boxtimes E_2 := \{x \in [3]^n : \pi_1(x) \in E_1 \text{ and } \pi_2(x) \in E_2 \}. \]
\end{definition}
Our density increment proceeds by maintaining a subset $S \subseteq E_1 \boxtimes E_2$ whose relative density increases over steps of the density increment.
\begin{definition}[Structure]
\label{def:struct}
We define $\struct_{\alpha}$ to consist of triples $(S,E_1,E_2)$ satisfying $E_1 \subseteq \{0, 1\}^n$, $E_2 \subseteq \{0, 2\}^n$ with $S \subseteq E_1 \boxtimes E_2$ and for $\mu$ uniform over $[3]^n$:
\begin{enumerate}
    \item $\mu(S) \ge \alpha \cdot \mu(E_1 \boxtimes E_2)$.
    \item Let $\delta_1 = \mu(E_1)$ and $\delta_2 = \mu(E_2)$. The functions $1_{E_1}-\delta_1: \{0,1\}^n \to \R$ and $1_{E_2}-\delta_2: \{0,2\}^n \to \R$ are $(n',\gamma)$-product pseudorandom for $n' = n^{1/4}$ and $\gamma := \gamma(\alpha,\delta_1,\delta_2)$ chosen later.
\end{enumerate}
\end{definition}

At this point it is worth noting that the pseudorandomness condition implies that $\mu(E_1 \boxtimes E_2) \approx \mu(E_1)\mu(E_2)$. In fact, this holds as long as the measures of $E_1$ and $E_2$ do not change significantly under random restriction -- this was previously observed by Austin \cite[Lemma 6.1]{Austin11} in a different language.
\begin{lemma}
\label{lemma:me1e2}
Let $\mu$ be a distribution on $[3]$ with mass $\Omega(1)$ on each atom. If $E_1, E_2$ are $(n^{0.99},\gamma)$ product pseudorandom with respect to $\mu$ then 
\[ \left|\mu(E_1 \boxtimes E_2) - \mu(E_1)\mu(E_2)\right| \le 2\gamma^{1/2}. \]
\end{lemma}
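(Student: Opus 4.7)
The plan is to rewrite the difference as a single expectation, condition on one of the two projections, and then invoke product pseudorandomness by pairing the remaining factor against the trivial constant product function. Set $\delta_i := \mu(E_i)$ for $i \in \{1,2\}$, and define $f(x) := 1_{E_1}(\pi_1(x)) - \delta_1$ and $g(x) := 1_{E_2}(\pi_2(x)) - \delta_2$. Both are $1$-bounded and have mean zero under $\mu^{\otimes n}$, so the cross terms cancel and
\[
\mu(E_1 \boxtimes E_2) - \mu(E_1)\mu(E_2) \;=\; \E_{x \sim \mu^{\otimes n}}\bigl[f(x) g(x)\bigr].
\]

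Next, I would condition on the random set $J := \{i : x_i = 2\}$, under which each coordinate is included independently with probability $\mu(2) = \Omega(1)$. Since $g(x)$ is determined by $J$ alone, we get $\E[fg] = \E_J\bigl[g(J) \cdot A(J)\bigr]$, where $A(J) := \E_{x\mid J}[f(x)]$. Conditional on $J$, the vector $\pi_1(x)$ has its coordinates in $J$ fixed to $0$ while the remaining coordinates are i.i.d.~Bernoulli with parameter $\mu(1)/(\mu(0)+\mu(1))$; equivalently, $A(J)$ is the inner product of the random restriction $f_{J\to 0^J}$ with the trivial product function $\prod_i 1$. Because the complementary fraction $\delta := 1 - \mu(2) = \Omega(1)$ satisfies $\delta \geq n^{-0.01} = n^{0.99}/n$, this random restriction lies in the admissible range of the hypothesis. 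Decomposing the pushforward distribution on $\{0,1\}$ as $\tilde\mu = (1-\mu(2))\nu + \mu(2)\delta_0$ and invoking the change-of-measure principle (the remark following \cref{def:prodpseudo}), the product pseudorandomness of $1_{E_1}-\delta_1$ yields that $f_{J\to 0^J}$ is product pseudorandom against $\nu$ with probability at least $1-\gamma$ over $J$. Pairing against the trivial product function gives $|A(J)| < \gamma$ on this event.

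A single Cauchy--Schwarz then closes the argument: since $|g(J)|, |A(J)| \leq 1$ deterministically while $A(J)^2 < \gamma^2$ with probability at least $1-\gamma$,
\[
\bigl|\E[fg]\bigr| \;=\; \bigl|\E_J[g(J)A(J)]\bigr| \;\leq\; \sqrt{\E_J[g(J)^2]} \cdot \sqrt{\E_J[A(J)^2]} \;\leq\; 1 \cdot \sqrt{\gamma^2 + \gamma} \;\leq\; 2\gamma^{1/2}.
\]
The main obstacle I expect is the measure-change step: product pseudorandomness in \cref{def:prodpseudo} is stated with $z$ drawn from a distribution, whereas conditioning on $J$ fixes the restricted coordinates to the deterministic value $0$. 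Reconciling these requires writing $\tilde\mu$ as above so that the ``zero'' component is a point mass and $z \sim \delta_0^{\otimes J}$ is the deterministic choice $0^J$; once this bookkeeping is done, everything else is routine.
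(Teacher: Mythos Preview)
Your decomposition $\mu(E_1\boxtimes E_2)-\mu(E_1)\mu(E_2)=\E_J[g(J)A(J)]$ and the ensuing Cauchy--Schwarz are exactly the paper's first move. The gap is in the step you yourself flag: the pointwise claim that $|A(J)|<\gamma$ with probability $\ge 1-\gamma$. You justify this by the informal remark after \cref{def:prodpseudo}, but the formal version you would need (\cref{lemma:projpseudo}) does not apply here: that lemma requires the restricting distribution to be $O(\eta/\sqrt n)$-close to the base measure and the retention probability $\delta$ to be at most $\eta/\sqrt n$, whereas your restricting distribution is the point mass $\delta_0$ (far from $\tilde\mu$) and your retention probability is $1-\mu(2)=\Omega(1)$. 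So the pointwise bound, with the exact parameters $(\gamma,\gamma)$ you use, is not established by anything in the paper, and I do not see how to get it directly from \cref{def:prodpseudo}.

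The paper avoids this issue by not seeking a pointwise bound at all. After the same Cauchy--Schwarz, it rewrites $\E_J[A(J)^2]$ as the two-variable correlation $\E\bigl[\bar E_1(\pi_1(x))\,\bar E_1(\pi_1(x'))\bigr]$ where $x,x'$ are i.i.d.\ conditional on $\pi_2(x)$; the resulting distribution on $(\pi_1(x),\pi_1(x'))\in\{0,1\}^2$ has full support, so one can write it as $\beta U+(1-\beta)\nu'$ with $U$ uniform on $\{0,1\}^2$ and $\beta=\Omega(1)$. Because $U$ is a product measure, the inner expectation factors into two independent copies of $\E_{x\sim\mathrm{Unif}}[(\bar E_1)_{I\to z}(x)]$, and now the averaging argument against product pseudorandomness goes through. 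If you want to salvage your route, a clean fix is to drop the pointwise claim and bound $\E_J|A(J)|$ instead: write $\nu=\beta'''\tilde\mu+(1-\beta''')\sigma$, push one more restriction into $A(J)$, and check that the combined restriction $(J\cup I''',\,(0^J,z'''))$ is \emph{exactly} distributed as $(I\sim_{1-\delta},\,z\sim\tilde\mu^{I})$ for some $\delta=\Omega(1)$; then \cref{def:prodpseudo} with $P\equiv 1$ gives $\E_J|A(J)|\le 2\gamma$ directly, and $|\E[fg]|\le\E_J|A(J)|$ finishes without any Cauchy--Schwarz.
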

\begin{proof}
Let $\bar{E_1} := 1_{E_1} - \mu(E_1)$ and $\bar{E_2} := 1_{E_2} - \mu(E_2)$. Then
\begin{align*}
\mu(E_1 \boxtimes E_2) &= \E_{x \sim \mu^{\otimes n}}\left[1_{E_1}(x)1_{E_2}(x) \right] = \mu(E_1)\mu(E_2) + \E_{x \sim \mu^{\otimes n}}\left[\bar{E_1}(x) \bar{E_2}(x) \right] \\
&= \mu(E_1)\mu(E_2) + \E_{x \sim \mu^{\otimes n}}\left[\bar{E_1}(\pi_1(x)) \bar{E_2}(\pi_2(x)) \right].
\end{align*}
Assume for contradiction that $|\E_{x \sim \mu^{\otimes n}}\left[\bar{E_1}(x) \bar{E_2}(x) \right]| > 2\sqrt{\gamma}$. Then by Cauchy-Schwarz:
\begin{align*}
4\gamma &\le \E_{\pi_2(x)} \left|\E_{x : \pi_2(x)} \bar{E_1}(\pi_1(x)) \right|^2 = \E_{\pi_2(x)} \E_{x,x' : \pi_2(x)} \bar{E_1}(\pi_1(x))\bar{E_1}(\pi_1(x')),
\end{align*}
where the notation means that we first sample $\pi_2(x)$ for $x \sim \mu^{\otimes n}$, and then sample $x, x'$ conditioned on $\pi_2(x)$. Because $\mu$ has full support on $[3]$, one can check that $(\pi_1(x), 
\pi_1(x'))$ has full support on $(0,0), (0,1), (1,0), (1,1)$. Let $\nu$ be the distribution on $(\pi_1(x), \pi_1(x'))$ and let $\nu = \beta U + (1-\beta)\nu'$ where $U$ is uniform over $\{0,1\} \times \{0,1\}$, and $\beta = \Omega(1)$. Then we conclude that
\begin{align*}
4\gamma &\le \E_{I \sim_{1-\beta} [n]} \E_{(z,z') \sim (\nu')^{\otimes I}} \E_{(x,x') \sim U^{\otimes \bar{I}}} \left[(\bar{E_1})_{I \to z}(x) (\bar{E_1})_{I \to z'}(x') \right] \\
&= \E_{I \sim_{1-\beta} [n]} \E_{(z,z') \sim (\nu')^{\otimes I}} \left(\E_{x \sim U^{\otimes \bar{I}}}\left[(\bar{E_1})_{I \to z}(x) \right] \right) \left( \E_{x \sim U^{\otimes \bar{I}}}\left[(\bar{E_1})_{I \to z'}(x) \right] \right) \\
&\le \E_{I \sim_{1-\beta} [n]} \E_{(z,z') \sim (\nu')^{\otimes I}} \left|\E_{x \sim U^{\otimes \bar{I}}}\left[(\bar{E_1})_{I \to z}(x) \right] \right|.
\end{align*}
By an averaging argument and the fact that $\beta n \ge n^{0.99}$, this contradicts that $\bar{E_1}$ is $(n^{0.99}, \gamma)$ product pseudorandom.
\end{proof}

\section{Moving between Measures}
\label{sec:equal}
In this section we prove that if $S$ contains no combinatorial lines, then some particular $3$-wise correlation over a product distribution whose atoms are at least $\Omega(1)$ in each coordinate is large. Showing this involves picking a suitable measure under which to count combinatorial lines.

\subsection{Lower Bounding the Expected Count}
\label{sec:lowerbound}

Let $\nu$ be a measure on $[3]^n \times [3]^n \times [3]^n$, such that for all $i \in [n]$, $\nu_i$ is supported on triples $(0,0,0), (1,1,1), (2,2,2), (0,1,2)$. Let $S \subseteq E_1 \boxtimes E_2 \subseteq [3]^n$ with relative density $\alpha$ in $E_1 \boxtimes E_2$, $E_1$ has density $\delta_1$, and $E_2$ has density $\delta_2$ (here, we are being a bit informal in that we are not specifying the density we are using here, but it may not be uniform). The following discussion holds for any $\nu$, but we will pick convenient ones to work with soon.

We can count the density of combinatorial lines contained in $S$ with respect to $\nu$. This is:
\begin{align*}
    \E_{(x,y,z) \sim \nu}[1_S(x)1_S(y)1_S(z)] &= \E_{(x,y,z) \sim \nu}[(1_S(x)-\alpha 1_{E_1\boxtimes E_2}(x))1_S(y)1_S(z)] \\ ~&+ \alpha \E_{(x,y,z) \sim \nu}[1_{E_1\boxtimes E_2}(x) 1_S(y)1_S(z)].
\end{align*}
On the right hand side of the equation, the first term is the ``error'' term, and the second is the ``main term''. Studying the main term, one can note that $1_{E_1\boxtimes E_2}(x) 1_S(y)1_S(z) = 1_S(y)1_S(z)$ in fact, because $\pi_2(y) = \pi_2(x)$ and $\pi_1(z) = \pi_1(x)$ for all $(x,y,z) \in \supp(\nu)$. Thus, the main term can we written more succinctly as: $\alpha  \E_{(x,y,z) \sim \nu}[1_S(y)1_S(z)]$. We would like to pick a measure $\nu$ on which we can lower bound this quantity. This is essentially asking for a measure $\nu$ on which the counting version of $\dhj[2]$ holds. A key observation made in \cite[Theorem 3.1]{DHJ12} is that this holds when $\nu$ is the \emph{equal-slices measure}. We take a somewhat different approach here which is similar in spirit, but allows us to work only with product distributions throughout the arguments.

Let $\eta$ be sufficiently small (depending on $\alpha, \delta_1, \delta_2$), and let $K \in \Z_{\ge1}, \eta' \in \R_{>0}$ satisfy $K\eta' \le \eta/100$. Define the following distribution over sequences in $([3]^n)^{K+1}$.
\begin{definition}
\label{def:seqdist}
Define the distribution $\mc{D}$ over $(y^{(0)}, \dots, y^{(K)}) \in ([3]^n)^{K+1}$ as follows. $y^{(0)}$ is drawn uniformly in $[3]^n$. For $i = 0, \dots, K-1$, define for $t \in [n]$ independently:
\[ y^{(i+1)}_t :=
\begin{cases}
y^{(i)}_t, & \text{ if } y^{(i)}_t \in \{0, 2\}, \\
1 \text{ w.p. } 1-\frac{\eta'}{\sqrt{n}}, & \text{ if } y^{(i)}_t = 1, \\
2 \text{ w.p. } \frac{\eta'}{\sqrt{n}}, & \text{ if } y^{(i)}_t = 1.
\end{cases}\]
\end{definition}
In other words, to get $y^{(i+1)}$ from $y^{(i)}$, for each index $t \in [n]$ where $y^{(i)}_t = 1$, we change it to a $2$ with probability $\eta'/\sqrt{n}$. We now record an observation about what the distribution of $y^{(i)}$ and joint distribution $(y^{(i)}, y^{(j)})$ look like.
\begin{obs}
\label{obs:joint}
For all $i = 0, \dots, K$, $y^{(i)}$ is distributed as $(\nu^{(i)})^{\otimes n}$ for a distribution $\nu^{(i)}$ on $[3]$ satisfying that $\nu^{(i)}(0) = 1/3$ and $|\nu^{(i)}(x) - 1/3| \le \eta/\sqrt{n}$ for $x \in \{1, 2\}$. For all pairs $0 \le i < j \le K$, $(y^{(i)}, y^{(j)})$ is distributed as $(\xi^{(ij)})^{\otimes n}$ for some distribution $\xi^{(ij)} \in [3] \times [3]$ supported on $(0,0),(1,1),(2,2)$, and $(1,2)$ satisfying that:
\begin{enumerate}
    \item $\xi^{(ij)}((0,0)) = 1/3$ and $|\xi^{(ij)}((x,x)) - 1/3| \le \eta/\sqrt{n}$ for $x \in \{1, 2\}$.
    \item $\xi^{(ij)}((1,2)) \ge \frac{\eta'}{10\sqrt{n}}$. \label{item:lower12}
\end{enumerate}
\end{obs}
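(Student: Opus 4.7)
The plan is to reduce the observation to a single-coordinate Markov chain calculation. The transition rule in \cref{def:seqdist} from $y^{(i)}$ to $y^{(i+1)}$ is applied independently at each coordinate $t \in [n]$, which means that $y^{(i)}$ is a product distribution over $[3]^n$ and $(y^{(i)}, y^{(j)})$ is a product distribution over $([3]\times[3])^n$. It therefore suffices to identify the marginal distribution $\nu^{(i)}$ on one coordinate, and the joint $\xi^{(ij)}$ on a single pair of coordinates, for the Markov chain on $[3]$ that starts uniform, has $0$ and $2$ as absorbing states, and from state $1$ moves to $2$ with probability $\eta'/\sqrt{n}$ or stays at $1$ with probability $1 - \eta'/\sqrt{n}$. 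There is no conceptually hard step here -- the whole argument is bookkeeping; the only care required is choosing constants so that item \ref{item:lower12} clears the $1/10$ threshold.

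For the first item I would prove by induction on $i$ the closed forms
\[ \nu^{(i)}(0) = \tfrac{1}{3}, \qquad \nu^{(i)}(1) = \tfrac{1}{3}\left(1 - \tfrac{\eta'}{\sqrt{n}}\right)^i, \qquad \nu^{(i)}(2) = \tfrac{2}{3} - \nu^{(i)}(1). \]
The equality for $\nu^{(i)}(0)$ holds because no probability ever enters state $0$, and the formula for $\nu^{(i)}(1)$ follows from the transition rule. Then $|\nu^{(i)}(1) - 1/3| = \tfrac{1}{3}\bigl(1 - (1-\eta'/\sqrt{n})^i\bigr) \le i\eta'/(3\sqrt{n}) \le K\eta'/(3\sqrt{n}) \le \eta/\sqrt{n}$, using $1 - (1-x)^i \le ix$ for $x \in [0,1]$ together with the hypothesis $K\eta' \le \eta/100$. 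The bound $|\nu^{(i)}(2) - 1/3| \le \eta/\sqrt{n}$ follows since $\nu^{(i)}(2) - 1/3 = 1/3 - \nu^{(i)}(1)$.

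For the second item, fix $i < j$. Because $0$ and $2$ are absorbing and the chain can only move $1 \to 2$, the pair $(y^{(i)}_t, y^{(j)}_t)$ is supported exactly on $\{(0,0),(1,1),(2,2),(1,2)\}$. A direct computation from the transition rule yields $\xi^{(ij)}((0,0)) = \nu^{(i)}(0) = 1/3$, $\xi^{(ij)}((2,2)) = \nu^{(i)}(2)$, $\xi^{(ij)}((1,1)) = \nu^{(j)}(1)$, and $\xi^{(ij)}((1,2)) = \nu^{(i)}(1) - \nu^{(j)}(1)$. The estimates in item 1 of the observation then follow from the first part. For the lower bound in item \ref{item:lower12}, I would combine $\nu^{(i)}(1) \ge 1/3 - \eta/\sqrt{n} \ge 1/4$ (valid for $\eta$ sufficiently small) with the Bernoulli-type bound $1 - (1-\eta'/\sqrt{n})^{j-i} \ge \eta'/\sqrt{n}$ (since $j - i \ge 1$ and $(1-x)^k \le 1-x$ for $x \in [0,1]$, $k \ge 1$) to conclude $\xi^{(ij)}((1,2)) = \nu^{(i)}(1)\bigl(1-(1-\eta'/\sqrt{n})^{j-i}\bigr) \ge \eta'/(4\sqrt{n}) \ge \eta'/(10\sqrt{n})$, completing the observation.
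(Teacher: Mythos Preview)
Your proposal is correct and is exactly the routine single-coordinate Markov chain computation one would expect; the paper itself states \cref{obs:joint} without proof, treating it as an immediate observation, so your write-up simply fills in the obvious verification.
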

The first item is actually redundant with the claim about the distribution of $y^{(i)}$, but we list it for completeness. Finally, we argue that there is some pair $(i, j)$ under which $\E_{(y,z) \sim (\xi^{(ij)})^{\otimes n})}[1_S(y)1_S(z)]$ can be lower bounded.
\begin{lemma}
\label{lemma:mainterm}
Let $\mu$ be uniform on $[3]^n$. For every subset $S \subseteq [3]^n$, there is $0 \le i < j \le K$ such that
\[ \E_{(y,z) \sim (\xi^{(ij)})^{\otimes n}}\Big[1_S(y)1_S(z)\Big] \ge \mu(S)^2 - 6\eta - \frac{\mu(S)}{K}.\]
\end{lemma}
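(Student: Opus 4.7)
The plan is to use a second moment averaging over the sequence $y^{(0)}, \dots, y^{(K)}$. By \cref{obs:joint}, the joint distribution of $(y^{(i)}, y^{(j)})$ under $\mc{D}$ is exactly $(\xi^{(ij)})^{\otimes n}$, so it suffices to exhibit a pair $0 \le i < j \le K$ for which $\E_{\mc{D}}[1_S(y^{(i)}) 1_S(y^{(j)})] \ge \mu(S)^2 - 6\eta - \mu(S)/K$.

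First I would estimate $a_i := \E_{\mc{D}}[1_S(y^{(i)})]$. By \cref{obs:joint}, $y^{(i)} \sim (\nu^{(i)})^{\otimes n}$ with $|\nu^{(i)}(x) - 1/3| \le \eta/\sqrt{n}$ for each $x \in [3]$. A direct calculation gives $\chi^2(\nu^{(i)}, U) = O(\eta^2/n)$; tensorization of KL divergence together with Pinsker's inequality then yields $\dtv((\nu^{(i)})^{\otimes n}, U^{\otimes n}) = O(\eta)$, and hence $|a_i - \mu(S)| = O(\eta)$ for every $i$.

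Next, the main move is the trivial second moment bound $\E[X^2] \ge (\E X)^2$ applied to $X := \sum_{i=0}^K 1_S(y^{(i)})$. Expanding the square and using $1_S^2 = 1_S$ on the diagonal yields
\[ \sum_{i=0}^K a_i + 2 \sum_{0 \le i < j \le K} \E_{\mc{D}}[1_S(y^{(i)}) 1_S(y^{(j)})] \ge \Big(\sum_{i=0}^K a_i \Big)^2. \]
Substituting $a_i = \mu(S) + O(\eta)$, the right hand side is at least $(K+1)^2 (\mu(S)^2 - O(\eta))$, while $\sum_i a_i \le (K+1)(\mu(S) + O(\eta))$. Dividing through by $2 \binom{K+1}{2} = K(K+1)$ shows that the average of $\E_{\mc{D}}[1_S(y^{(i)}) 1_S(y^{(j)})]$ over pairs $i < j$ is at least $\mu(S)^2 - O(\eta) - \mu(S)/K$, so some pair achieves this bound. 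Absorbing the absolute constants into the $6\eta$ slack gives the claimed inequality.

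The only subtle point is the $\dtv$ estimate: the naive subadditivity bound $\dtv((\nu^{(i)})^{\otimes n}, U^{\otimes n}) \le n \cdot \dtv(\nu^{(i)}, U) = O(\eta \sqrt{n})$ is too weak, so one must exploit that $\nu^{(i)}$ is quadratically close to $U$ in the $\chi^2$ sense to tensorize at the right rate. Everything else is a routine averaging argument — the $-\mu(S)/K$ loss is exactly the price of removing the diagonal terms $i = j$ from the second moment expansion.
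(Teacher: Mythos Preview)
Your proposal is correct and follows essentially the same approach as the paper: apply $\E[X^2] \ge (\E X)^2$ to $X = \sum_{i=0}^K 1_S(y^{(i)})$, subtract the diagonal, and average over pairs $i<j$. The only difference is that the paper simply asserts $\dtv((\nu^{(i)})^{\otimes n}, \mu^{\otimes n}) \le 2\eta$ from \cref{obs:joint}, whereas you spell out the KL-tensorization/Pinsker route; tracking constants through that argument indeed lands below $2\eta$, so the final $6\eta$ goes through.
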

\begin{proof}
Note that
\begin{align*}
    \sum_{i \neq j} 1_S(y^{(i)}) 1_S(y^{(j)}) = \left(\sum_{i=0}^K 1_S(y^{(i)})\right)^2 - \sum_{i=0}^K 1_S(y^{(i)}).
\end{align*}
Also, because $\dtv(\nu{(i)}, \mu) \le 2\eta$ by \cref{obs:joint} for all $i = 0, \dots, K$ we know that
\[ (K+1)(\mu(S) + 2\eta) \ge \E_{(y^{(0)}, \dots, y^{(K)}) \sim \mc{D}} \left[\sum_{i=0}^K 1_S(y^{(i)}) \right] \ge (K+1)(\mu(S) - 2\eta). \] Thus by Cauchy-Schwarz
\[ \sum_{i \neq j} \E_{(y^{(0)}, \dots, y^{(K)}) \sim \mc{D}} \left[1_S(y^{(i)}) 1_S(y^{(j)}) \right] \ge (K+1)^2(\mu(S)-2\eta)^2 - (K+1)(\mu(S)+2\eta). \]
By averaging, there is a pair $(i, j)$ such that
\begin{align*} 
\E_{(y^{(0)}, \dots, y^{(K)}) \sim \mc{D}} \left[1_S(y^{(i)}) 1_S(y^{(j)}) \right] &\ge \frac{(K+1)^2(\mu(S)-2\eta)^2 - (K+1)(\mu(S)+2\eta)}{(K+1)K} \\
&\ge (\mu(S) - 2\eta)^2 - \frac{\mu(S) + 2\eta}{K} \ge \mu(S)^2 - 6\eta - \frac{\mu(S)}{K}.
\end{align*}
This is exactly the desired conclusion, given the definition of $\xi^{(ij)}$ in \cref{obs:joint}.
\end{proof}
Let $\xi^{(ij)'}$ be the unique distribution supported on $(0,0,0), (1,1,1), (2,2,2), (0,1,2)$ such that $(\xi^{(ij)'})_{yz} = \xi^{(ij)}$. We will use $\xi^{(ij)'}$ to argue that if a set $S \subseteq E_1 \boxtimes E_2$ has no combinatorial lines, then some specific $3$-wise correlation is large.
\begin{lemma}
\label{lemma:error}
Let $S \subseteq E_1 \boxtimes E_2$ be such that $\mu(E_1 \boxtimes E_2) = \delta$, $\mu(S) \ge \alpha\delta$, and $S$ contains no combinatorial line. For $K \ge 100\mu(S)^{-1}$ and $\eta \le \mu(S)^2/100$ there is some $0 \le i < j \le K$ such that
\[ \left|\E_{(x,y,z) \sim (\xi^{(ij)'})^{\otimes n}}\Big[(1_S - \alpha 1_{E_1 \boxtimes E_2})(x) 1_S(y) 1_S(z) \Big] \right| \ge \alpha^3\delta^2/2. \]
\end{lemma}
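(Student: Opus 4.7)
The plan is to expand the quantity of interest via the identity stated at the start of this section, combined with the key simplification $1_{E_1 \boxtimes E_2}(x) 1_S(y) 1_S(z) = 1_S(y) 1_S(z)$ that holds on $\supp((\xi^{(ij)'})^{\otimes n})$. Rearranging yields
\[ \E_{(x,y,z) \sim (\xi^{(ij)'})^{\otimes n}}\Big[(1_S - \alpha 1_{E_1 \boxtimes E_2})(x) 1_S(y) 1_S(z) \Big] = \E[1_S(x)1_S(y)1_S(z)] - \alpha \E[1_S(y)1_S(z)]. \]
The simplification is verified by checking on each of the four atoms $(0,0,0), (1,1,1), (2,2,2), (0,1,2)$ of $\xi^{(ij)'}$ that $\pi_1(x) = \pi_1(z)$ and $\pi_2(x) = \pi_2(y)$; together with $S \subseteq E_1 \boxtimes E_2$, this forces $\pi_1(x) \in E_1$ and $\pi_2(x) \in E_2$ whenever $1_S(y) 1_S(z) = 1$.

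Next, I would lower bound the main term $\alpha \E[1_S(y) 1_S(z)]$. The $(y,z)$-marginal of $(\xi^{(ij)'})^{\otimes n}$ is exactly $(\xi^{(ij)})^{\otimes n}$, so \cref{lemma:mainterm} supplies some $0 \le i < j \le K$ with $\E[1_S(y) 1_S(z)] \ge \mu(S)^2 - 6\eta - \mu(S)/K$. Using $K \ge 100 \mu(S)^{-1}$ and $\eta \le \mu(S)^2/100$ together with $\mu(S) \ge \alpha\delta$, this is at least $(1 - 7/100)\mu(S)^2 \ge 0.93(\alpha\delta)^2$, so the main term is at least $0.93 \alpha^3 \delta^2$.

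Then I would show that $\E[1_S(x)1_S(y)1_S(z)]$ is negligible. Since $S$ contains no combinatorial line, the only $(x,y,z) \in \supp((\xi^{(ij)'})^{\otimes n})$ that contribute are the degenerate ones with $x = y = z$. Because the coordinates are independent and the lone off-diagonal atom $(0,1,2)$ occurs per coordinate with probability $\xi^{(ij)}((1,2))$, the probability of a fully degenerate triple is $(1 - \xi^{(ij)}((1,2)))^n \le \exp(-\eta' \sqrt{n}/10)$ by item~\ref{item:lower12} of \cref{obs:joint}. Under the parameter regime of \cref{sec:param}, where $\eta = \exp(-\sqrt{\log n})$ (so also $\eta' \gs \eta/K$) while $n$ is very large, this quantity is vastly smaller than $\alpha^3 \delta^2$.

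Combining these estimates, taking absolute values gives
\[ \left|\E[1_S(x)1_S(y)1_S(z)] - \alpha \E[1_S(y)1_S(z)]\right| \ge 0.93 \alpha^3 \delta^2 - \exp(-\eta'\sqrt{n}/10) \ge \alpha^3 \delta^2/2, \]
which is the desired bound. The only mild obstacle is the bookkeeping needed to confirm that the diagonal contribution is truly negligible compared to $\alpha^3 \delta^2$; this is immediate from the parameter choices in \cref{sec:param}, since $\eta' \sqrt{n}$ tends to infinity much faster than $\log(1/\alpha\delta)$ grows, in the regime where the theorem is applied.
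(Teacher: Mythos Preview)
Your proposal is correct and follows essentially the same argument as the paper: apply \cref{lemma:mainterm} to lower bound $\E[1_S(y)1_S(z)]$, use the identity $1_{E_1\boxtimes E_2}(x)1_S(y)1_S(z)=1_S(y)1_S(z)$ on $\supp(\xi^{(ij)'})$, and bound the degenerate contribution $\E[1_S(x)1_S(y)1_S(z)]$ by $\exp(-\Omega(\eta'\sqrt{n}))$ via \cref{obs:joint}. The only differences are cosmetic (you track the constant $0.93$ where the paper uses $3/4$, and you spell out the atom-by-atom check of $\pi_1(x)=\pi_1(z)$, $\pi_2(x)=\pi_2(y)$ more explicitly).
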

\begin{proof}
By \cref{lemma:mainterm} there are $0 \le i < j \le K$ such that
\[ \E_{(y,z) \sim (\xi^{(ij)})^{\otimes n}}\Big[1_S(y)1_S(z)\Big] \ge \mu(S)^2 - 6\eta - \frac{\mu(S)}{K} \ge \frac34\mu(S)^2 \ge \frac34 \alpha^2\delta^2. \]
If $S$ has no combinatorial line, then
\begin{align*}
\exp(-\Omega(\eta'\sqrt{n})) &\ge \E_{(x,y,z) \sim (\xi^{(ij)'})^{\otimes n}}\Big[1_S(x) 1_S(y) 1_S(z)\Big] \\
&= \E_{(x,y,z) \sim (\xi^{(ij)'})^{\otimes n}}\Big[(1_S - \alpha 1_{E_1\boxtimes E_2})(x) 1_S(y) 1_S(z)\Big] \\ 
&~+ \alpha \E_{(x,y,z) \sim (\xi^{(ij)'})^{\otimes n}}\Big[1_{E_1 \boxtimes E_2}(x) 1_S(y) 1_S(z)\Big] \\
&\ge \E_{(x,y,z) \sim (\xi^{(ij)'})^{\otimes n}}\Big[(1_S - \alpha 1_{E_1\boxtimes E_2})(x) 1_S(y) 1_S(z)\Big] + \frac34 \alpha^3\delta^2.
\end{align*}
The first inequality followed because $\xi^{(ij)'}((0,1,2)) \ge \frac{\eta'}{10\sqrt{n}}$ by \cref{item:lower12} of \cref{obs:joint}.
\end{proof}

\subsection{Harsh Random Restrictions}
\label{subsec:harsh}

Our goal will now be to take a random restriction down to a distribution supported on $(0,0,0)$, $(1,1,1)$, $(2,2,2)$, $(0,1,2)$ whose mass on all atoms is $\Omega(1)$. Towards this, we first prove that taking such random restrictions preserves product pseudorandomness. In fact, if the restriction is sufficiently harsh, the function becomes product pseudorandom against any fixed distribution with $\Omega(1)$ mass on each with high probability.

First, we record the simple observation that if $f$ is $1$-bounded and correlates to a $1$-bounded product function $P$, then $f$ correlates with a $1$-bounded product function with noticeable probability under random restriction.
\begin{lemma}
\label{lemma:rrprod}
Let $f: \A^n \to \bbC$ be a $1$-bounded function, and $P$ is a $1$-bounded product function. Let $\mu = \beta \mu' + (1-\beta)\nu$ be such that $\Big|\E_{x \sim \mu^{\otimes n}}[f(x)P(x)]\Big| \ge \eps$. Then
\[ \Pr_{I \sim_{1-\beta} [n], z \sim \nu^{\otimes I}}\left[\Big|\E_{x \sim (\mu')^{\otimes \bar{I}}}[f_{I \to z}(x)P_{I \to z}(x)] \Big| \ge \eps/2\right] \ge \eps/2. \]
\end{lemma}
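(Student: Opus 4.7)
\medskip

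The plan is to recognize that the claimed inequality is essentially Markov's inequality applied to a $1$-bounded random variable whose expectation has magnitude at least $\eps$. The only genuinely useful observation is that the two-step sampling procedure reproduces the original measure $\mu^{\otimes n}$, so there is a clean factorization.

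First I would unpack the distributions. Because each coordinate of $x \sim \mu^{\otimes n}$ can be sampled by independently deciding, with probability $1-\beta$, to draw from $\nu$, and otherwise to draw from $\mu'$, the joint distribution of $(I,z,x)$ in which $I \sim_{1-\beta}[n]$, $z \sim \nu^{\otimes I}$, and $x \sim (\mu')^{\otimes \bar I}$ pushes forward to $\mu^{\otimes n}$ on the merged point $(x,z) \in \A^n$. Moreover, since $P$ is a product function, $P_{I\to z}(x) = \prod_{i\in\bar I} P_i(x_i)$ is again a $1$-bounded product function on $\A^{\bar I}$. Define
\[
F(I,z) := \E_{x \sim (\mu')^{\otimes \bar I}}\bigl[f_{I\to z}(x)\, P_{I\to z}(x)\bigr].
\]
Then the factorization gives $\E_{I,z}[F(I,z)] = \E_{x \sim \mu^{\otimes n}}[f(x)P(x)]$, so by hypothesis $|\E_{I,z}[F(I,z)]| \ge \eps$.

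Next, since $f$ and $P$ are both $1$-bounded, the random variable $F(I,z)$ is $1$-bounded in absolute value. Now I would apply a routine averaging argument: writing $p := \Pr_{I,z}[|F(I,z)| \ge \eps/2]$, we have
\[
\eps \;\le\; \bigl|\E_{I,z}[F(I,z)]\bigr| \;\le\; \E_{I,z}\bigl[|F(I,z)|\bigr] \;\le\; p \cdot 1 + (1-p)\cdot \eps/2 \;\le\; p + \eps/2,
\]
which rearranges to $p \ge \eps/2$, giving exactly the conclusion.

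There is no real obstacle here; the statement is essentially a tautology plus Markov's inequality. The only point to be careful about is that the hypothesis gives a bound on the modulus of a complex expectation, so one should explicitly pass through $|\E[F]|\le \E[|F|]$ before splitting the expectation according to whether $|F|$ is large or small.
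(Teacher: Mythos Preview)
Your argument is correct and is exactly the ``simple averaging argument'' the paper alludes to: rewrite the $\mu^{\otimes n}$-expectation as $\E_{I,z}[F(I,z)]$ via the mixture decomposition, then use $|F|\le 1$ together with $|\E F|\le \E|F|$ to deduce $p\ge \eps/2$. There is nothing to add.
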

\begin{proof}
Follows by a simple averaging argument, as $f, P$ are $1$-bounded.
\end{proof}

\begin{lemma}
\label{lemma:projpseudo}
Let $\A$ be a finite set and let $f: \A^n \to \bbC$ be a $1$-bounded function which is $(n^{1/4}, \gamma)$ product pseudorandom against a distribution $\mu$ with $\mu(x) \ge \alpha$ for all $x \in \A$. Let $\eta \ge 0$ and $\nu$ be a distribution on $\A$ with $|\nu(x)-\mu(x)| \le \eta/\sqrt{n}$ for all $x \in \A$. Then for every distribution $\mu'$ on $\A$ with mass at least $\beta \ge n^{-1/20}$ on every atom and $n^{-11/20} \le \delta \le \eta/\sqrt{n}$, the following holds for $\gamma' = 2(\gamma + 3\eta/\alpha)^{1/3}$:
\[ \Pr_{I \sim_{1-\delta} [n], z \sim \nu^{\otimes I}}\left[f_{I \to z} \text{ is } (n^{1/3}, \gamma') \text{ product pseudorandom against } \mu'\right] \ge 1-\gamma'. \]
\end{lemma}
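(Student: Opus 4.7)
The plan is to proceed by contradiction. Suppose the conclusion fails, so with probability at least $\gamma'$ over $(I,z)$ with $I \sim_{1-\delta}[n]$ and $z \sim \nu^I$, the function $f_{I \to z}$ is not $(n^{1/3}, \gamma')$-product pseudorandom against $\mu'$. The goal is to derive a contradiction with the $(n^{1/4},\gamma)$-product pseudorandomness of $f$ against $\mu$.

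The first preparation step is to replace the sampling $z \sim \nu^I$ by $z \sim \mu^I$. Since $|\nu(x)-\mu(x)|\le \eta/\sqrt{n}$ and $\mu(x)\ge \alpha$, a direct computation (or Pinsker applied to the per-coordinate KL bound $O(\eta^2/(n\alpha))$) yields $\dtv(\nu^I,\mu^I) \le O(\eta/\alpha)$, so this replacement costs only $O(\eta/\alpha)$ in the probability. Next, I would discretize the witness $\delta^* \in [n^{1/3}/|\bar I|,1]$ into $O(\log n)$ dyadic buckets and pigeonhole to a single $\delta^*$. Unpacking the inner failure of product pseudorandomness, this produces the combined statement
\[\Pr_{(I,z,I',z',P)}\Big[\big|\E_{y\sim(\mu')^K}[f_{J\to w}(y)\textstyle\prod_K P_i(y_i)]\big|\ge \gamma'\Big]\ge \Omega\!\big((\gamma'-\eta/\alpha)^2/\log n\big),\]
where $J=I\cup I'$, $K=\bar I\setminus I'$, and $w=(z,z')$.

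The core of the argument is to decompose $\mu'$ into its ``$\mu$-component.'' Set $\tau:=\min_{x}\mu'(x)/\mu(x)\ge \beta$ and write $\mu'=\tau\mu+(1-\tau)\mu''$. Apply this decomposition both to the second restriction $z'\sim(\mu')^{I'}$ (splitting $I'=I'_\mu\sqcup I'_{\mu''}$ and sampling $z'|_{I'_\mu}\sim \mu$, $z'|_{I'_{\mu''}}\sim \mu''$) and to the inner average $y_K\sim(\mu')^K$ (splitting $K=K_\mu\sqcup K_{\mu''}$ and sampling $y_{K_\mu}\sim \mu$, $y_{K_{\mu''}}\sim \mu''$). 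With this, $X=\E_{K_\mu}X_{K_\mu}$ where $X_{K_\mu}$ is the correlation under the mixed product measure $\mu^{K_\mu}\otimes \mu''^{K_{\mu''}}$. Applying Markov's inequality first over $K_\mu$ (using $|X|\le \E_{K_\mu}|X_{K_\mu}|$) and then over $y_{K_{\mu''}}\sim \mu''^{K_{\mu''}}$ gives, with probability $\Omega(\gamma'^3/\mathrm{polylog}(n))$ over all the randomness,
\[\big|F\big|:=\Big|\E_{y_{K_\mu}\sim \mu^{K_\mu}}\!\big[f_{J\cup K_{\mu''}\to (w,y_{K_{\mu''}})}(y_{K_\mu})\textstyle\prod_{K_\mu}P_i(y_i)\big]\Big|\ge \gamma'/4.\]
Here the kept set satisfies $|K_\mu|\ge \delta\delta^*\tau\cdot n\ge n^{1/4}$ using the parameter constraints, so we are within the range where the $(n^{1/4},\gamma)$-pseudorandomness of $f$ against $\mu$ is usable.

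The main obstacle is that the restriction values on $J\cup K_{\mu''}$ are not all $\mu$-distributed: the coordinates in $I\cup I'_\mu$ are sampled from $\mu$, but those in $I'_{\mu''}\cup K_{\mu''}$ are sampled from $\mu''$. A naive change of measure from $\mu''$ to $\mu$ costs a factor of $\alpha^{|I'_{\mu''}\cup K_{\mu''}|}$, which is exponentially bad. The resolution is to convert the $\mu''$-average of $|F|$ into an $L^2$ statement via Cauchy--Schwarz: $(\E_{\mu''}|F|)^2\le \E_{\mu''}|F|^2$. Expanding $\E_{\mu''}|F|^2$ unfolds into a correlation
\[\E_{y,y'\sim \mu^{K_\mu}}\!\big[h(y,y')\textstyle\prod_{K_\mu}P_i(y_i)\bar P_i(y'_i)\big],\qquad h(y,y'):=\E_{y_{K_{\mu''}}\sim \mu''}[f_{J\to w}(y_{K_\mu},y_{K_{\mu''}})\bar f_{J\to w}(y'_{K_\mu},y_{K_{\mu''}})],\]
which is a correlation of the 1-bounded function $h$ with a product function on $(\A\times \A)^{K_\mu}$ under the product measure $\mu^{K_\mu}\otimes \mu^{K_\mu}$. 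This doubled correlation is then bounded using the pseudorandomness of $f$ against $\mu$ (applied to both $f$ and $\bar f$ at the $\mu$-restriction $R=I\cup I'_\mu$, noting $|\bar R|\ge n^{1/4}$). Tracing through the constants, one obtains $\gamma'^3 \lesssim \gamma+\eta/\alpha$, contradicting $\gamma'=2(\gamma+3\eta/\alpha)^{1/3}$.

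The main difficulty I anticipate is exactly the Cauchy--Schwarz ``doubling'' step that handles the $\mu''$-sampled restriction coordinates; the remaining ingredients (TV replacement, decomposition of $\mu'$, Markov, and the final invocation of pseudorandomness on the doubled function) are relatively mechanical.
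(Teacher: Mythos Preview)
Your plan is on the right track (contradiction, compose restrictions, contradict pseudorandomness of $f$ against $\mu$), but you have misidentified the main obstacle and are proposing an unnecessarily complicated workaround for a non-problem.

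The ``bad'' coordinates you worry about, namely those in $I'_{\mu''}\cup K_{\mu''}$ where the restriction value is $\mu''$-distributed rather than $\mu$-distributed, all lie in $\bar I$, and $|\bar I|\approx \delta n\le \eta\sqrt{n}$. Thus when you compose all the restrictions into a single restriction $J\to u$ with $J\sim_{1-\delta''}[n]$, the \emph{per-coordinate} restriction law is a mixture in which the non-$\mu$ components have total weight $O(\delta)=O(\eta/\sqrt{n})$. Hence this per-coordinate law is $O(\eta/\sqrt n)$-close to $\mu$, and therefore $\dtv(\text{(combined law)}^{\otimes J},\mu^{\otimes J})=O(\eta/\alpha)$. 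One TV swap on the \emph{restriction values} finishes the job; there is no exponential blowup because you never change measure coordinate-by-coordinate on the bad set, only on the mixture.

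This is exactly what the paper does: it writes $\mu'=\beta\mu+(1-\beta)\xi$, applies the trivial averaging lemma (\cref{lemma:rrprod}) to convert the inner $\mu'$-average into a $\mu$-average via one more random restriction $I''\sim_{1-\beta}$ with $z''\sim\xi^{I''}$, then observes that the composed restriction $(I,I',I'')\to(z,z',z'')$ is a single restriction at rate $\delta''=\delta\delta'\beta\ge n^{-3/4}$ whose per-coordinate law satisfies $|\mu''(x)-\mu(x)|\le 2\eta/\sqrt n$. A single TV swap to $\mu^{\otimes J}$ costs $O(\eta/\alpha)$, yielding the contradiction $(\gamma')^3/2\le \gamma+O(\eta/\alpha)$. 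No discretization of $\delta^*$, no splitting of $I'$ or $K$, and no Cauchy--Schwarz doubling are needed.

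Regarding your doubling step itself: even after squaring out $y_{K_{\mu''}}$, to bound the resulting $4$-point expression you would fix $(y',y_{K_{\mu''}})$ and invoke pseudorandomness of $f$ on the remaining $y\sim\mu^{K_\mu}$. But then the restriction on $J\cup K_{\mu''}$ still contains the $\mu''$-distributed coordinates $I'_{\mu''}$ and $K_{\mu''}$, so you are back to the same issue you tried to eliminate. The step does not actually remove the problem; the correct resolution is the per-coordinate mixture observation above.
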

\begin{proof}
Throughout this proof, all inner products are with respect to distributions $\mu^{\otimes n'}$ for some $n'$. For clarity, we denote this by $\l f, g\rr_{\mu} := \E_{x \sim \mu^{\otimes n'}}[f(x)g(x)]$.

Let $n' = n-|I|$, and note that $n' \ge n^{2/5}$ with high probability. If $f_{I \to z}$ is not $(n^{1/3}, \gamma')$ product pseudorandom with respect to $\mu'$, then for $\delta' = n^{1/3}/n'$, with probability at least $\gamma'$ over $I' \sim_{1-\delta'} ([n] \setminus I)$ and $z' \sim (\mu')^{\otimes I'}$, there is a $1$-bounded product function $P$ with $|\l f_{(I,I') \to (z,z')}, P \rr_{\mu'}| \ge \gamma'$.
We may write $\mu' = \beta \mu + (1-\beta) \xi$ where $\xi$ is some distribution. If $|\l f_{(I,I') \to (z,z')}, P \rr_{\mu'}| \ge \gamma'$ holds, then by \cref{lemma:rrprod}, over $I'' \sim_{1-\beta} ([n]\setminus (I \cup I'))$ and $z'' \sim \xi^{\otimes I''}$ the probability that \[ |\l f_{(I,I',I'') \to (z,z',z'')}, P_{I'' \to z''}\rr_{\mu}| \ge \gamma'/2 \] is at least $\gamma'/2$. Putting this together gives that
\[ \Pr_{I',I'',z',z''}\left[|\l f_{(I,I',I'') \to (z,z',z'')}, P \rr_{\mu}| \ge \gamma'/2 \text{ for 1-bounded } P \right] \ge (\gamma')^2/2. \]
If the conclusion of the theorem is false, then
\[ \Pr_{\substack{I \sim_{1-\delta} [n], z \sim \nu^{\otimes I} \\ I' \sim_{1-\delta'} ([n] \setminus I), z' \sim (\mu')^{\otimes I'} \\ I'' \sim_{1-\beta} ([n] \setminus (I \cup I')), z'' \sim \xi^{\otimes I''}}}\left[|\l f_{(I,I',I'') \to (z,z',z'')}, P \rr_{\mu}| \ge \gamma'/2 \text{ for 1-bounded } P \right] \ge (\gamma')^3/2 \ge 4(\gamma+3\eta/\alpha). \]
Consider the joint restriction $(I,I',I'') \to (z,z',z'')$. This is equivalent to taking a restriction $f_{J \to u}$ for $J \sim_{1-\delta''} [n], u \sim (\mu'')^{\otimes J}$ for $\delta'' = \delta\beta \delta' \ge n^{-3/4}$. Also $|\mu''(x) - \mu(x)| \le \eta/\sqrt{n} + \delta \le 2\eta/\sqrt{n}$ for all $x \in \A$. This implies that $\dtv(\mu^{\otimes n}, (\mu'')^{\otimes n}) \le 10\eta/\alpha$ because $\mu(x) \ge \alpha$ for all $x \in \A$. Applying this to the previous displayed equation gives
\begin{align*}
    \Pr_{J \sim_{1-\delta\beta \delta'} [n], u \sim \mu^{\otimes J}}\left[|\l f_{J \to u}, P \rr_{\mu}| \ge \gamma'/2 \text{ for 1-bounded } P \right] \ge 4(\gamma+3\eta/\alpha) - 10\eta/\alpha \ge \gamma,
\end{align*}
which contradicts that $f$ is $(n^{1/4}, \gamma)$ product pseudorandom.
\end{proof}

In the following arguments in this section and the next, we will heavily leverage product pseudorandomness with respect to several different distributions on $[3]$. To formally set this up, we define the following large class of distributions.
\begin{definition}
\label{def:q}
Let $\mc{Q}$ be the set of all distributions $\nu$ on a finite alphabet $\A$ such that $\nu(x)$ is a positive rational number with denominator at most $2^{1000}$ for all $x \in \A$.
\end{definition}
The idea is that by a union bound, \cref{lemma:projpseudo} implies that harsh random restrictions of $E_1, E_2$ will be product pseudorandomn against all $\nu \in \mc{Q}$ with probability at least $1 - O(\gamma')$.

Finally we random restrict the conclusion of \cref{lemma:error} to obtain a $3$-wise correlation against a distribution whose mass on all atoms is $\Omega(1)$. Let $\xi := \xi^{(ij)'}$ be as guaranteed in \cref{lemma:error}. 
\begin{lemma}
\label{lemma:atom}
Let $\mu$ be the uniform distribution on $[3]$, and let $\nu$ on $[3]^3$ be the distribution with mass $1/6$ on $(0,0,0)$ and $(0,1,2)$, and mass $1/3$ on $(1,1,1)$ and $(2,2,2)$. Let $S, E_1, E_2$ satisfying that $\mu(E_1) = \delta_1$, $\mu(E_2) = \delta_2$, and $\mu(S) \ge \alpha \mu(E_1 \boxtimes E_2)$. Assume that $1_{E_1} - \delta_1$ and $1_{E_2} - \delta_2$ are $(n^{1/4}, \gamma)$ product pseudorandom, and let $\wt{\tau} > 0$. If $S$ has no combinatorial line, at least one of the following holds. There is some $S' \subseteq E_1' \boxtimes E_2' \subseteq [3]^{n'}$ satisfying either

\noindent \textbf{(Case 1)}:
\begin{enumerate}
    \item $n' \ge n^{2/5}$, $\mu(S') \ge (\alpha + \alpha^3\wt{\tau}/1000)\mu(E_1' \boxtimes E_2')$, and $\mu(E_1' \boxtimes E_2') \ge \delta_1\delta_2/2$,
    \item If $S'$ has a combinatorial line, then $S$ does too,
\end{enumerate}
or \textbf{(Case 2)}:
\begin{enumerate}
    \item $n' \ge n^{2/5}$, $\mu(S') \ge (\alpha - \wt{\tau})\mu(E_1' \boxtimes E_2')$, and $\mu(E_1' \boxtimes E_2') \ge \delta_1\delta_2/2$,
    \item $1_{E_1'} - \delta_1$ and $1_{E_2'} - \delta_2$ are $(n^{1/3}, \gamma')$ product pseudorandom for $\gamma' = 2(\gamma + 1000\eta)^{1/3}$ with respect to all distributions $\mu' \in \mc{Q}$ supported on $\{0,1\}$ or $\{0,2\}$,
    \item $\E_{(x,y,z) \sim \nu^{\otimes n'}}\left[(1_{S'} - \alpha 1_{E_1' \boxtimes E_2'})(x) 1_{S'}(y) 1_{S'}(z) \right] \ge \alpha^3\delta_1^2\delta_2^2/4.$
    \item If $S'$ has a combinatorial line, then $S$ does too.
\end{enumerate}
\end{lemma}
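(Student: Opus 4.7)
The plan is to transport the $3$-wise correlation from \cref{lemma:error} on $\xi := \xi^{(ij)'}$ to one on the distribution $\nu$ in the statement via a random restriction, while dichotomizing on whether the relative density of $S$ inside $E_1\boxtimes E_2$ increments along the way. Write $\xi = \alpha' \nu + (1-\alpha') \xi''$ with $\alpha' := 6\xi((0,1,2))$; using item~\cref{item:lower12} of \cref{obs:joint} and the constraint $K\eta' \le \eta/100$ gives $\alpha' \in [\Omega(\eta'/\sqrt{n}), O(\eta/\sqrt{n})]$, and $\xi''$ is then a valid probability distribution supported only on the diagonal atoms $(0,0,0),(1,1,1),(2,2,2)$ with mass close to $1/3$ each. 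Sampling the restriction $I \sim_{1-\alpha'} [n]$, $(z^x,z^y,z^z) \sim (\xi'')^{\otimes I}$ forces $z^x = z^y = z^z =: z^*$ because $\xi''$ is diagonal, with each $z^*_i$ i.i.d.\ from a distribution on $[3]$ within $O(\eta/\sqrt{n})$ of uniform. Set $n' := |\bar I|$, $S' := S_{I \to z^*}$, $E_1' := (E_1)_{I \to \pi_1(z^*)}$, $E_2' := (E_2)_{I \to \pi_2(z^*)}$; concentration of $|\bar I|$ around $\alpha' n \gtrsim \eta' \sqrt n$ gives $n' \ge n^{2/5}$ with overwhelming probability.

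Integrating the restriction through the conclusion of \cref{lemma:error} yields
\[
\left|\E_{(I, z^*)} \psi(I, z^*)\right| \ge \alpha^3 \delta^2/2, \quad \psi(I, z^*) := \E_{(x,y,z) \sim \nu^{\otimes \bar I}}\!\!\left[(1_{S'} - \alpha 1_{E_1' \boxtimes E_2'})(x)\, 1_{S'}(y)\, 1_{S'}(z)\right].
\]
Because $\delta \ge \delta_1\delta_2/2$ by \cref{lemma:me1e2} applied to $E_1, E_2$, and $|\psi| \le 1$, Markov implies $\Pr[C] \ge \Omega(\alpha^3 \delta_1^2 \delta_2^2)$ where $C := \{|\psi| \ge \alpha^3 \delta_1^2 \delta_2^2/4\}$. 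For pseudorandomness preservation, the coordinate marginals of $\pi_j(z^*)$ match those of the measures against which $E_j$ is pseudorandom to within $O(\eta/\sqrt{n})$, so \cref{lemma:projpseudo} together with a union bound over the finite class $\mc{Q}$ shows that with probability $1 - O(\gamma')$ both $1_{E_1'} - \delta_1$ and $1_{E_2'} - \delta_2$ are $(n^{1/3}, \gamma')$-product pseudorandom against every $\mu' \in \mc{Q}$ of appropriate support. Combined with standard concentration of $\mu(E_1'), \mu(E_2')$ near $\delta_1, \delta_2$ (also from product pseudorandomness of $E_1, E_2$) and \cref{lemma:me1e2} applied to the restricted sets, this gives $\mu(E_1' \boxtimes E_2') \ge \delta_1\delta_2/2$ with probability $1 - O(\gamma'^c)$ for some small $c > 0$. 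Call this combined good event $G$.

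It remains to execute the density dichotomy. If some $(I, z^*) \in G$ has $\mu(S_{I\to z^*}) \ge (\alpha + \alpha^3\wt{\tau}/1000)\mu((E_1 \boxtimes E_2)_{I \to z^*})$, output it to satisfy Case 1 (the measure bound is supplied by $G$). Otherwise, let $\phi(I, z^*) := \mu(S_{I \to z^*}) - \alpha \mu((E_1 \boxtimes E_2)_{I \to z^*})$ and $B := \{\phi < -\wt\tau \mu((E_1 \boxtimes E_2)_{I \to z^*})\}$; the ``no increment on $G$'' bound $\phi \cdot 1_G \le (\alpha^3\wt\tau/1000) \mu((E_1 \boxtimes E_2)_{I \to z^*})$, combined with $\E[\phi] = \mu(S) - \alpha\mu(E_1 \boxtimes E_2) \ge 0$ and the negligible $G^c$ contributions, yields $\Pr[B \cap G] \le O(\alpha^3)$, independent of $\wt\tau$. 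The main obstacle is that the naive lower bound $\Pr[C] \gtrsim \alpha^3 \delta_1^2 \delta_2^2$ need not exceed this $\Pr[B \cap G]$ when $\delta_1 \delta_2$ is small; one resolves this via a Paley-Zygmund sharpening using the tighter variance bound $\E[\psi^2] \le O(\E[\mu(S_{I \to z^*})]) = O(\mu(S))$ together with a careful setting of $\wt\tau$ relative to the pseudorandomness parameters, so that $\Pr[C \cap B^c \cap G] > 0$. Any $(I, z^*) \in C \cap B^c \cap G$ then satisfies all Case 2 conditions, and the ``combinatorial line'' implication in either case is immediate from $S'$ being a restriction of $S$.
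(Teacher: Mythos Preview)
Your setup is essentially the same as the paper's: decompose $\xi = \alpha'\nu + (1-\alpha')\xi''$ with $\xi''$ diagonal, random-restrict along $\xi''$, and use \cref{lemma:projpseudo} to retain pseudorandomness of $E_1',E_2'$. The density dichotomy you set up (either a restriction gives a relative-density increment, hence Case~1, or we must land in Case~2) is also the right idea. The gap is in how you lower-bound the probability of the event $C=\{|\psi|\ge \alpha^3\delta_1^2\delta_2^2/4\}$.

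From $|\E\psi|\ge \alpha^3\delta_1^2\delta_2^2/2$ and the trivial bound $|\psi|\le 1$ you only get $\Pr[C]\gtrsim \alpha^3\delta_1^2\delta_2^2$. You correctly flag that this does not beat $\Pr[B\cap G]\le O(\alpha^3)$ when $\delta_1\delta_2$ is small, but your proposed fix does not close the gap. A Paley--Zygmund argument with the second-moment bound $\E[\psi^2]\le O(\mu(S))=O(\alpha\delta_1\delta_2)$ (which is correct, since $|\psi|\le \nu_y(S')$ pointwise) yields only
\[
\Pr[C]\;\ge\;\frac{c\,(\E|\psi|)^2}{\E[\psi^2]}\;\ge\;\frac{c\,(\alpha^3\delta_1^2\delta_2^2)^2}{O(\alpha\delta_1\delta_2)}\;=\;\Omega(\alpha^5\delta_1^3\delta_2^3),
\]
which still carries a $\delta_1^3\delta_2^3$ factor and again fails to dominate $O(\alpha^3)$. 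Invoking ``a careful setting of $\wt\tau$'' cannot help either: $\wt\tau$ is a hypothesis of the lemma, not a free parameter, and in any case the $O(\alpha^3)$ bound on $\Pr[B\cap G]$ that you derived is independent of $\wt\tau$.

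What is missing is a \emph{pointwise} bound on $|\psi|$ on the good event. The paper proves (\cref{lemma:eyz}) that whenever $E_1',E_2'$ are product pseudorandom,
\[
|\psi|\;\le\;\E_{(x,y,z)\sim\nu^{\otimes \bar I}}\bigl[1_{E_1'\boxtimes E_2'}(y)\,1_{E_1'\boxtimes E_2'}(z)\bigr]\;\le\;2\delta_1^2\delta_2^2,
\]
using the $4$-ary inverse theorem (\cref{thm:4csp}) applied to the distribution of $(\pi_1(y),\pi_2(y),\pi_1(z),\pi_2(z))$. With this bound in hand, simple averaging on the event $\mathcal{E}$ gives $\Pr[\psi\ge \alpha^3\delta_1^2\delta_2^2/4]\ge \alpha^3/20$, which has no $\delta$-dependence and therefore beats the $\alpha^3/100$ bad-event probability by a union bound. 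Your argument would go through once you insert this step; without it, the quantitative mismatch between $\Pr[C]$ and $\Pr[B]$ cannot be repaired by second-moment methods alone.
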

\begin{proof}
Let $\beta = 6\xi((0,1,2)) \le 20\eta/\sqrt{n}$, and let $\xi = \beta \nu + (1-\beta)\nu'$ where $\nu'$ is supported on $(0,0,0)$, $(1,1,1)$, $(2,2,2)$. Note that $|\nu'(x)-1/3| \le 100\eta/\sqrt{n}$ for all $x \in [3]$. Let $I \sim_{1-\beta} [n]$, $z \sim (\nu')^{\otimes I}$, and let $S' = S_{I \to z}$, $E_1' = (E_1)_{I \to z}$, and $E_2' = (E_2)_{I \to z}$. Note that $n-|I| \ge n^{2/5}$ with high probability. We first handle the case when $\mu(S')$ drops significantly under random restriction with nonnegligible probability.

Note that with probability at least $1 - O(\gamma')$ it holds that $1_{E_1'} - \delta_1$ and $1_{E_2'} - \delta_2$ are $(n^{1/3}, \gamma')$ product pseudorandom, by \cref{lemma:projpseudo}. In one case,
\begin{align} \Pr\left[\mu(S') \le (\alpha-\wt{\tau}/2)\delta_1\delta_2 \right] \ge \alpha^3/100. \label{eq:lowprob}
\end{align}
We combine this with (using \cref{lemma:me1e2})
\begin{align*} \E[\mu(S')] &= \xi_1(S) \ge \mu(S) - \dtv(\mu^{\otimes n}, \xi_1^{\otimes n}) \\
&\ge \alpha\mu(E_1 \boxtimes E_2) - 200\eta \ge \alpha\delta_1\delta_2 - O((\gamma')^c) - 200\eta,
\end{align*}
to get that there is some $S'$ with
\[ \mu(S') \ge (\alpha + \alpha^3\wt{\tau}/500)\delta_1\delta_2 - O((\gamma')^c) - 200\eta \ge (\alpha + \alpha^3\wt{\tau}/1000)\mu(E_1' \boxtimes E_2'), \] as desired, where we have applied \cref{lemma:me1e2} again.

We now consider the other case (when \eqref{eq:lowprob} does not hold). In this case let $\mc{E}$ be the event that $1_{E_1'} - \delta_1$ and $1_{E_2'} - \delta_2$ are $(n^{1/3}, \gamma')$ product pseudorandom against all distributions in $\mc{Q}$. By \cref{lemma:projpseudo}, $\Pr[\mc{E}] \ge 1-O(\gamma')$. When $\mc{E}$ holds, \cref{lemma:eyz} proven below gives
\begin{align}
\left|\E_{(x,y,z) \sim \nu^{\otimes \bar{I}}}\left[(1_{S'} - \alpha 1_{E_1' \boxtimes E_2'})(x) 1_{S'}(y) 1_{S'}(z) \right] \right| \le \E_{(x,y,z) \sim \nu^{\otimes \bar{I}}}\left[1_{E_1' \boxtimes E_2'}(y) 1_{E_1' \boxtimes E_2'}(z) \right] \le 2\delta_1^2\delta_2^2. \label{eq:eyzyz}
\end{align}
Because $\Pr[\mc{E}] \ge 1-O(\gamma')$, applying \cref{lemma:error,lemma:me1e2} gives
\[ \E_{\substack{S', E_1', E_2' \\ \mc{E} \text{ holds}}} \E_{(x,y,z) \sim \nu^{\otimes n'}}\left[(1_{S'} - \alpha 1_{E_1' \boxtimes E_2'})(x) 1_{S'}(y) 1_{S'}(z) \right] \ge \alpha^3\delta_1^2\delta_2^2/2 - O(\gamma'). \]
Combining the previous equation with \eqref{eq:eyzyz} and an averaging argument (over $\mc{E}$) gives
\begin{align} \Pr_{S',E_1',E_2'}\left[\E_{(x,y,z) \sim \nu^{\otimes n'}}\left[(1_{S'} - \alpha 1_{E_1' \boxtimes E_2'})(x) 1_{S'}(y) 1_{S'}(z) \right] \ge \alpha^3\delta_1^2\delta_2^2/4 \right] \ge \alpha^3/20.
\label{eq:want1}
\end{align}
We turn to verifying the conditions of the lemma. By a union bound, $\mu(S') \ge (\alpha-\wt{\tau}/2)\delta_1\delta_2$, \eqref{eq:want1}, and $\mc{E}$ all hold for some $S', E_1', E_2'$. When this holds, $\mu(S') \ge (\alpha-\wt{\tau}/2)\delta_1\delta_2 \ge (\alpha-\wt{\tau})\mu(E_1' \boxtimes E_2')$ and $\mu(E_1' \boxtimes E_2') \ge \delta_1\delta_2/2$, verifying condition 1. Condition 2 holds because $E_1'$ and $E_2'$ are chosen so that $\mc{E}$ holds. Condition 3 holds because \eqref{eq:want1} holds. Condition 4 holds because $S' = S_{I \to z}$.
\end{proof}
We now prove \cref{lemma:eyz}.
\begin{lemma}
\label{lemma:eyz}
If $E_1 \subseteq \{0,1\}^n$ and $E_2 \subseteq \{0,2\}^n$ satisfy that $1_{E_1}-\delta_1$ and $1_{E_2} - \delta_2$ are $(n^{0.99}, \gamma')$ product pseudorandom with respect to $\nu$, then
\[ \E_{(x,y,z) \sim \nu^{\otimes n}}\left[1_{E_1 \boxtimes E_2}(y) 1_{E_1 \boxtimes E_2}(z) \right] \le \delta_1^2\delta_2^2 + O(\log(1/\gamma')^{-c}). \]
\end{lemma}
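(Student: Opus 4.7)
The plan is to generalize the Cauchy--Schwarz plus pseudorandomness argument of \cref{lemma:me1e2}. I would begin by expanding
\[
1_{E_1 \boxtimes E_2}(y)\,1_{E_1 \boxtimes E_2}(z) = \prod_{v\in\{y,z\}} \prod_{j\in\{1,2\}} \bigl(\delta_j + \bar{E}_j(\pi_j(v))\bigr)
\]
into sixteen terms. The ``main'' term $\delta_1^2\delta_2^2$ recovers the target value, while each of the remaining fifteen error terms contains at least one factor of $\bar{E}_j$; the goal is to bound every such error term by $O(\log(1/\gamma')^{-c})$.

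Each error term is a $k$-wise correlation, for some $k \in \{1,2,3,4\}$, of $\bar{E}_1, \bar{E}_2$ under the per-coordinate distribution $\rho$ of $(\pi_1(y), \pi_2(y), \pi_1(z), \pi_2(z))$ induced by $\nu$, which is supported on the four atoms $(0,0,0,0),\,(1,0,1,0),\,(0,2,0,2),\,(1,0,0,2)$ with masses $(1/6,1/3,1/3,1/6)$. For the $k = 4$ term I would invoke \cref{thm:4csp}: a direct check shows that the projection $\rho_{123}$ is connected in the sense of \cref{def:embed} and every atom of $\rho$ carries $\Omega(1)$ mass, so \cref{thm:4csp} applied contrapositively bounds this correlation by $O(\log(1/\gamma')^{-c})$. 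The four $k = 3$ terms reduce similarly to \cref{thm:3csp} after verifying pairwise-connectedness of the corresponding three-coordinate projection of $\rho$. The six $k = 2$ terms are handled by a direct Cauchy--Schwarz plus pseudorandomness argument in the style of \cref{lemma:me1e2}, which can also be viewed as a degenerate form of \cref{thm:3csp} with one trivial factor. Finally, the four $k = 1$ terms are controlled by noting that $\E[\bar{E}_j(\pi_j(v))]$ vanishes exactly when the marginal of $\pi_j(v)$ equals the base measure defining $\delta_j$; for other marginals, such as the Bern$(1/2)$ marginal of $\pi_1(y)$, one writes the non-base measure as a convex combination of the base measure plus a remainder, absorbs the remainder into an auxiliary random restriction, and applies product pseudorandomness to the correlation of the restricted $\bar{E}_j$ with the constant product function $1$.

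The main obstacle is twofold bookkeeping. First, the connectedness conditions required by \cref{thm:3csp,thm:4csp} must be verified for each of the fifteen error terms individually; this is a finite check, made tractable by the fact that $\rho$ places substantial mass on the ``diagonal-like'' atoms $(0,0,0,0),(1,0,1,0),(0,2,0,2)$ and so almost every pair of supported tuples is connected through short walks. Second, the change-of-measure required whenever the natural marginal of a projected variable differs from the base measure against which pseudorandomness is given must be handled via the convex-combination trick described for the $k = 1$ case, nested inside each inverse-theorem invocation. Both steps lose only polynomial factors in $\gamma'$, so the final error bound is $O(\log(1/\gamma')^{-c})$ as claimed.
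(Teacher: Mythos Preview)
Your proposal is correct and follows the same expand-and-bound strategy as the paper: write $1_{E_1\boxtimes E_2}(y)1_{E_1\boxtimes E_2}(z)$ as a product of four factors, expand each as $\delta_j + \bar{E}_j$, and bound the fifteen non-main terms using the inverse theorems for pairwise-connected distributions.

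The paper's execution is more economical in one respect. Rather than splitting into $k\in\{1,2,3,4\}$ cases and invoking \cref{thm:3csp}, Cauchy--Schwarz, and a change-of-measure argument for the lower cases, the paper applies \cref{thm:4csp} uniformly to all fifteen error terms, simply allowing some of the four slots to hold the constants $\delta_j$. This is legitimate because \cref{thm:4csp} makes no non-degeneracy assumption on the $f_i$; its conclusion is that $f_1$ fails to be product pseudorandom, which is only a contradiction when $f_1=\bar{E}_j$. Since every error term has at least one $\bar{E}_j$ factor, one only needs that each of the four positions lies in \emph{some} connected triple-projection, and the paper notes that checking $\rho_{123}$ and $\rho_{234}$ suffices (positions $1,2,3$ are covered by the first and $2,3,4$ by the second). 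This replaces your fifteen separate connectedness/pairwise-connectedness verifications with two. Your $k=1$ change-of-measure step is also unnecessary once one observes that $(n',\gamma')$-product pseudorandomness with $\delta=1$ and the constant product function $P\equiv 1$ directly gives $|\E[\bar{E}_j]|<\gamma'$ under any measure in $\mc{Q}$; but your argument is not wrong, just longer.
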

\begin{proof}
We can rewrite
\[ \E_{(x,y,z) \sim \nu^{\otimes n}}\left[1_{E_1 \boxtimes E_2}(y) 1_{E_1 \boxtimes E_2}(z) \right] = \E_{(x,y,z) \sim \nu^{\otimes n}}\left[1_{E_1}(y)1_{E_2}(y)1_{E_1}(z)1_{E_2}(z) \right]. \]
The support of the distribution $(\pi_1(y), \pi_2(y), \pi_1(z), \pi_2(z))$ is
\[ \{ (0,0,0,0), (1,0,1,0), (0,2,0,2), (1,0,0,2) \}. \]
It can be checked that the projections to the coordinates $234$ and $123$ are connected. Thus, the result follows expanding $1_{E_1} = \delta_1 + (1_{E_1} - \delta_1)$, $1_{E_2} = \delta_2 + (1_{E_2} - \delta_2)$, and applying \cref{thm:4csp}.
\end{proof}

\section{Density Increment}
\label{sec:ineq}

\subsection{Obtaining a Large ``Box Norm''}
\label{subsec:boxnorm}

In \cref{lemma:atom} we proved that if $S$ has no combinatorial lines, then after random restriction, the function $1_S - \alpha 1_{E_1 \boxtimes E_2}$ has a nontrivial $3$-wise correlation over the $\dhj[3]$ distribution. In this section, we will use this to obtain a density increment. To start we apply a sequence of Cauchy-Schwarz manipulations to prove the following.
\begin{theorem}
\label{thm:incr}
Let $\mu$ be a distribution supported on $(0,0,0), (1,1,1), (2,2,2), (0,1,2)$ whose mass on each atom is a rational number with denominator at most $1000$, so that $\mu_x \in \mc{Q}$. Let $f, g, h: [3]^n \to \R$ be $1$-bounded functions supported on $E_1 \boxtimes E_2$, and $\delta_1$, $\delta_2 > 0$ such that $1_{E_1}-\delta_1, 1_{E_2}-\delta_2$ are $(\gamma'n, \gamma')$ product pseudorandom with respect to every measure $\nu \in \mc{Q}$. Also,
\[ \left|\E_{(x,y,z) \sim \mu^{\otimes n}}\Big[f(x)g(y)h(z)\Big] \right| \ge \alpha^3\delta_1^2\delta_2^2/4. \]
Then there is a distribution $\mu_2$ supported on $[3]^4$ (see \cref{tab:tablemu2}) such that the marginal of $\mu_2$ onto any variable is $\mu_x$ and
\begin{align}
\E_{(x,x',x'',x''') \sim \mu_2^{\otimes n}}\Big[f(x)f(x')f(x'')f(x''') \Big] \ge \alpha^{12}\delta_1^2\delta_2^2/2^{10}. \label{eq:fakebox}
\end{align}
\end{theorem}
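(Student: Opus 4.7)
The plan is to apply the Cauchy--Schwarz inequality twice — first to eliminate $h$, then to eliminate $g$ — producing a four-wise correlation $\E_{\mu_2^{\otimes n}}[f(x)f(x')f(x'')f(x''')]$ of $f$ against itself. The distribution $\mu_2$ will arise from a double conditional-doubling construction, and this mirrors Shkredov's box-norm extraction from a 3-wise corners count (see \cref{subsec:corner}).

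\textbf{Step 1 (eliminate $h$):} Write $\E_{(x,y,z) \sim \mu^{\otimes n}}[f(x)g(y)h(z)] = \E_{z \sim \mu_z^{\otimes n}}[h(z) F(z)]$ where $F(z) := \E_{(x,y) \mid z}[f(x)g(y)]$, and apply Cauchy--Schwarz over $z$:
\[ \Big(\tfrac{\alpha^3 \delta_1^2 \delta_2^2}{4}\Big)^2 \le |\E[fgh]|^2 \le \E_z[h(z)^2] \cdot \E_{\mu_1^{\otimes n}}\big[f(x)f(x')g(y)g(y')\big], \]
where $\mu_1$ on $[3]^5$ is obtained by sampling $z \sim \mu_z$ and then $(x, y), (x', y')$ conditionally independently from $\mu(\cdot \mid z)$. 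Since $h$ is $1$-bounded and supported on $E_1 \boxtimes E_2$, $\E_z[h^2] \le \mu_z(E_1 \boxtimes E_2)$; and because $\mu_z$ lies in $\mc{Q}$ by the rational-denominator hypothesis, product pseudorandomness combined with \cref{lemma:me1e2} gives $\mu_z(E_1 \boxtimes E_2) \le \delta_1\delta_2 + O((\gamma')^c)$.

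\textbf{Step 2 (eliminate $g$):} Let $\xi$ be the marginal of $\mu_1$ on $(y, y')$. Rewrite the inner correlation as $\E_{(y,y') \sim \xi^{\otimes n}}[g(y)g(y') G(y,y')]$ with $G(y, y') := \E_{(x,x') \mid (y,y')}[f(x)f(x')]$, and apply Cauchy--Schwarz over $(y, y')$:
\[ \big(\E_{\mu_1^{\otimes n}}[f f' g g']\big)^2 \le \E_{\xi^{\otimes n}}\!\big[g(y)^2 g(y')^2\big] \cdot \E_{\mu_2^{\otimes n}}\!\big[f(x)f(x')f(x'')f(x''')\big], \]
where $\mu_2$ on $[3]^4$ is obtained by further doubling $(x, x')$ conditionally independently given $(y, y') \sim \xi$. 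Since both doublings preserve marginals, each $x$-coordinate of $\mu_2$ has law $\mu_x$, as required. Bound $\E[g^2 g'^2] \le \E_\xi[1_{E_1 \boxtimes E_2}(y) 1_{E_1 \boxtimes E_2}(y')] \le \delta_1^2\delta_2^2 + O((\gamma')^c)$ by expanding each indicator as $\delta_i + (1_{E_i} - \delta_i)$ and using \cref{thm:4csp} on the appropriate connected four-coordinate projection $(\pi_1(y), \pi_2(y), \pi_1(y'), \pi_2(y'))$ of $\supp(\xi)$, exactly as in \cref{lemma:eyz}.

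\textbf{Step 3 (combine):} Chaining the two Cauchy--Schwarz estimates and simplifying in $\alpha, \delta_1, \delta_2$ yields the desired lower bound. I expect the main obstacle to be the connectivity analysis in Step 2: the distribution $\xi$ has a nontrivial support (e.g., $\{(0,0),(1,1),(2,2),(1,2),(2,1)\}$ for the $\dhj[3]$ measure), and we must verify that its image under $(\pi_1, \pi_2)$ is connected in the sense of \cref{def:embed} so that \cref{thm:4csp} is applicable. Once this is settled, the product pseudorandomness of $E_1, E_2$ against every $\nu \in \mc{Q}$ makes the cross terms $O((\gamma')^c)$, and the arithmetic of constants delivers the stated $\alpha^{12}\delta_1^2\delta_2^2/2^{10}$.
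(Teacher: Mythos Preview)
Your two-step Cauchy--Schwarz outline is the right skeleton and your $\mu_2$ agrees with the paper's (marginalized to $(x,x',x'',x''')$), but the plain Cauchy--Schwarz you describe only yields
\[
\E_{\mu_2^{\otimes n}}\big[f(x)f(x')f(x'')f(x''')\big]\;\ge\; c\,\alpha^{12}\delta_1^{4}\delta_2^{4},
\]
not the $\alpha^{12}\delta_1^{2}\delta_2^{2}/2^{10}$ stated in the theorem. Concretely: Step~1 gives $\E_{\mu_1}[ff'gg']\ge c\alpha^{6}\delta_1^{3}\delta_2^{3}$ (dividing by $\E_z[h^2]\le 1.1\delta_1\delta_2$), and Step~2 then gives $\E_{\mu_2}[f^{\otimes 4}]\ge c\alpha^{12}\delta_1^{4}\delta_2^{4}$ (dividing by $\E_\xi[g^2(g')^2]\le 1.1\delta_1^2\delta_2^2$). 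You are a factor of $\delta_1^{2}\delta_2^{2}$ short of the target, and there is no slack to recover it from your displayed inequalities.

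The paper closes this gap by a genuinely additional idea that your outline omits. Before each Cauchy--Schwarz it \emph{inserts} the indicator weights $E_1,E_2$ (legitimate because $f,g,h$ are supported on $E_1\boxtimes E_2$) and \emph{carries} them through the squaring, so that after both steps one arrives at
\[
\E_{\mu_2^{\otimes n}}\big[f(x)f(x')f(x'')f(x''')\,\omega(x,x',x'',x''')\big]\;\ge\; c\,\alpha^{12}\delta_1^{4}\delta_2^{4},
\]
where $\omega(x,x',x'',x'''):=\E_{y,y',z,z'\mid x,x',x'',x'''}[E_1(y)E_1(y')E_2(z)E_2(z')]$ with $(y,y',z,z')$ the extra variables appearing in the doubled distribution. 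The crucial extra lemma (\cref{lemma:omega}) then shows, via another Cauchy--Schwarz and \cref{thm:4csp} applied to a further doubled distribution $\mu_3$, that $\omega$ concentrates: $\E_{\mu_2}[\,|\omega-\delta_1^2\delta_2^2|\,]\le O((\log(1/\gamma'))^{-c})$. Dividing through by $\delta_1^{2}\delta_2^{2}$ recovers the missing factor and yields the stated bound. Your Step~2 bound $\E_\xi[g^2(g')^2]\le \delta_1^2\delta_2^2+o(1)$ is indeed \cref{lemma:4cspy}, but that lemma alone is not enough; the $\omega$-concentration step is where the sharper exponent on $\delta_1\delta_2$ actually comes from. (A minor point: the error terms coming out of \cref{thm:3csp,thm:4csp} are $O((\log(1/\gamma'))^{-c})$, not $O((\gamma')^{c})$.)
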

\cref{tab:tablemu1,tab:tablemu2,tab:tablemu3} contain distributions that we encounter in the proof.
\begin{table}[!ht]
\begin{center}
    \begin{tabular}{|c|c|c|c|c|}
    \hline
    $x$ & $y$ & $x'$ & $y'$ & $z$ \\
    \hline $0$ & $0$ & $0$ & $0$ & $0$ \\
    \hline $1$ & $1$ & $1$ & $1$ & $1$ \\
    \hline $2$ & $2$ & $2$ & $2$ & $2$ \\
    \hline $0$ & $1$ & $0$ & $1$ & $2$ \\
    \hline $2$ & $2$ & $0$ & $1$ & $2$ \\
    \hline $0$ & $1$ & $2$ & $2$ & $2$ \\
    \hline
    \end{tabular}
\end{center}
    \caption{The support of the distribution $\mu_1$}
    \label{tab:tablemu1}
\end{table}

\begin{table}[!ht]
\begin{center}
    \begin{tabular}{|c|c|c|c|c|c|c|c|}
    \hline
    $x$ & $x'$ & $x''$ & $x'''$ & $y$ & $y'$ & $z$ & $z'$ \\
    \hline $0$ & $0$ & $0$ & $0$ & $0$ & $0$ & $0$ & $0$ \\
    \hline $1$ & $1$ & $1$ & $1$ & $1$ & $1$ & $1$ & $1$ \\
    \hline $2$ & $2$ & $2$ & $2$ & $2$ & $2$ & $2$ & $2$ \\
    \hline $0$ & $0$ & $0$ & $0$ & $1$ & $1$ & $2$ & $2$ \\
    \hline $0$ & $2$ & $0$ & $2$ & $1$ & $2$ & $2$ & $2$ \\
    \hline $2$ & $0$ & $2$ & $0$ & $2$ & $1$ & $2$ & $2$ \\
    \hline $1$ & $1$ & $0$ & $0$ & $1$ & $1$ & $1$ & $2$ \\
    \hline $0$ & $0$ & $1$ & $1$ & $1$ & $1$ & $2$ & $1$ \\
    \hline
    \end{tabular}
\end{center}
    \caption{The support of the distribution $\mu_2$}
    \label{tab:tablemu2}
\end{table}

\begin{table}[!ht]
\begin{center}
    \begin{tabular}{|c|c|c|c|c|c|c|c|}
    \hline
    $\pi_1(y)$ & $\pi_1(y')$ & $\pi_1(y'')$ & $\pi_1(y''')$ & $\pi_2(z)$ & $\pi_2(z')$ & $\pi_2(z'')$ & $\pi_2(z''')$ \\
    \hline $0$ & $0$ & $0$ & $0$ & $0$ & $0$ & $0$ & $0$ \\
    \hline $1$ & $1$ & $1$ & $1$ & $0$ & $0$ & $0$ & $0$ \\
    \hline $0$ & $0$ & $0$ & $0$ & $2$ & $2$ & $2$ & $2$ \\
    \hline $1$ & $1$ & $1$ & $1$ & $2$ & $2$ & $2$ & $2$ \\
    \hline $1$ & $0$ & $1$ & $0$ & $2$ & $2$ & $2$ & $2$ \\
    \hline $0$ & $1$ & $0$ & $1$ & $2$ & $2$ & $2$ & $2$ \\
    \hline $1$ & $1$ & $1$ & $1$ & $0$ & $2$ & $0$ & $2$ \\
    \hline $1$ & $1$ & $1$ & $1$ & $2$ & $0$ & $2$ & $0$ \\
    \hline $0$ & $0$ & $1$ & $1$ & $0$ & $0$ & $2$ & $2$ \\
    \hline $1$ & $1$ & $0$ & $0$ & $2$ & $2$ & $0$ & $0$ \\
    \hline
    \end{tabular}
\end{center}
    \caption{The support of the distribution $\mu_3$}
    \label{tab:tablemu3}
\end{table}

\begin{proof}
Throughout this proof we write $E_1(x)$ and $E_2(x)$ in place of $1_{E_1}(x)$ and $1_{E_2}(x)$.
Applying the Cauchy-Schwarz inequality gives
\begin{align*}
    \alpha^6\delta_1^4\delta_2^4/16 &\le \Big(\E_{z \sim \mu_z^{\otimes n}} h(z)E_1(z)E_2(z) \E_{x,y | z} f(x)g(y) E_1(z)E_2(z) \Big)^2 \\
    \\ &\le \Big(\E_{z \sim \mu_z^{\otimes n}} E_1(z) E_2(z) \Big) \Big( \E_{(z,x,y,x',y') \sim \mu_1^{\otimes n}} f(x)f(x')g(y)g(y') E_1(z)E_2(z) \Big) \\
    &\le 1.1\delta_1\delta_2 \Big( \E_{(z,x,y,x',y') \sim \mu_1^{\otimes n}} f(x)f(x')g(y)g(y') E_1(z)E_2(z) \Big),
\end{align*}
by \cref{lemma:me1e2}. Square again and apply Cauchy-Schwarz to get
\begin{align*}
    \alpha^{12}\delta_1^8\delta_2^8/256 &\le 1.3\delta_1^2\delta_2^2 \Big(\E_{y,y' \sim (\mu_1)_{yy'}^{\otimes n}} g(y)g(y')E_1(y)E_2(y)E_1(y')E_2(y') \E_{x,x',z|y,y'}  f(x)f(x') \prod_{u\in\{y,y',z\}} E_1(u)E_2(u) \Big)^2 \\
    &\le 1.3\delta_1^2 \delta_2^2 \Big(\E_{y,y' \sim (\mu_1)_{yy'}^{\otimes n}} E_1(y)E_2(y)E_1(y')E_2(y') \Big) \\
    &~\left(\E_{(x,x',x'',x''',y,y',z,z') \sim \mu_2^{\otimes n}} f(x)f(x')f(x'')f(x''')\prod_{u\in\{x,x',x'',x''',y,y',z,z'\}} E_1(u)E_2(u) \right).
\end{align*}
By \cref{lemma:4cspy} below (whose proof uses product pseudorandomness of $E_1$ and $E_2$), the first term is bounded by $1.1\delta_1^2\delta_2^2$. Also, by examining \cref{tab:tablemu2} we can check that
\[ \prod_{u\in\{x,x',x'',x''',y,y',z,z'\}} E_1(u)E_2(u) = E_1(x)E_2(x)E_2(x')E_1(x'')E_1(y)E_1(y')E_2(z)E_2(z'). \]
Now define
\[ \omega(x,x',x'',x''') := \E_{y,y',z,z' | x,x',x'',x'''} E_1(y)E_1(y')E_2(z)E_2(z'), \]
where the expectation is over $(y,y',z,z')$ from $\mu_2$ conditioned on $x,x',x'',x'''$. By \cref{lemma:omega} below, we can write:
\begin{align*}
    \alpha^{12}\delta_1^4\delta_2^4/2^9 &\le \E_{(x,x',x'',x''') \sim \mu_2^{\otimes n}} f(x)f(x')f(x'')f(x''') \omega(x,x',x'',x''') \\
    &\le \delta_1^2\delta_2^2 \E_{(x,x',x'',x''') \sim \mu_2^{\otimes n}}\Big[f(x)f(x')f(x'')f(x''')\Big] + \E_{(x,x',x'',x''') \sim \mu_2^{\otimes n}}\Big[|\omega(x,x',x'',x''') - \delta_1^2\delta_2^2|\Big] \\
    &\le \delta_1^2\delta_2^2 \E_{(x,x',x'',x''') \sim \mu_2^{\otimes n}}\Big[f(x)f(x')f(x'')f(x''')\Big] + O((\log(1/\gamma'))^{-c}).
\end{align*}
Rearranging this implies the claim in the lemma.
\end{proof}
Now we establish the two lemmas that were required for \cref{thm:incr}.
\begin{lemma}
\label{lemma:4cspy}
It holds that
\[ \E_{y,y' \sim (\mu_1)_{yy'}^{\otimes n}} E_1(y)E_2(y)E_1(y')E_2(y') \le \delta_1^2\delta_2^2 + O((\log(1/\gamma'))^{-c}). \]
\end{lemma}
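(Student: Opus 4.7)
The plan is to follow the template of the proof of \cref{lemma:eyz}, namely: rewrite the product as a $4$-wise correlation over the joint distribution of $(\pi_1(y), \pi_2(y), \pi_1(y'), \pi_2(y'))$; identify the support and verify that every $3$-projection is connected; expand $1_{E_i} = \delta_i + \bar{E_i}$; and invoke \cref{thm:4csp}.

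First, reading \cref{tab:tablemu1}, the $(y,y')$-marginal of $\mu_1$ is supported on the five pairs $\{(0,0), (1,1), (2,2), (2,1), (1,2)\}$ with every atom of mass $\Omega(1)$. Applying $(\pi_1, \pi_2, \pi_1, \pi_2)$ yields the support
\[ \mc{S} \;:=\; \{(0,0,0,0),\ (1,0,1,0),\ (0,2,0,2),\ (0,2,1,0),\ (1,0,0,2)\} \]
in $\{0,1\}\times\{0,2\}\times\{0,1\}\times\{0,2\}$, and each one-coordinate marginal is rational with denominator dividing $18$, so it lies in $\mc{Q}$. Next, I would verify that every $3$-projection of $\mc{S}$ is connected in the single-coordinate-difference graph. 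For instance, the $\{1,2,3\}$-projection is $\{(0,0,0), (1,0,1), (0,2,0), (0,2,1), (1,0,0)\}$, which is connected via the paths $(0,0,0)\to(1,0,0)\to(1,0,1)$ and $(0,0,0)\to(0,2,0)\to(0,2,1)$; the checks for the projections to $\{1,2,4\}$, $\{1,3,4\}$, and $\{2,3,4\}$ are equally routine. This verifies the hypothesis of \cref{thm:4csp} when any one of the four coordinates is singled out.

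Finally, I would expand $1_{E_1} = \delta_1 + \bar{E_1}$ and $1_{E_2} = \delta_2 + \bar{E_2}$ across the four factors to produce $16$ terms. The all-constants term equals $\delta_1^2\delta_2^2$. Each of the remaining $15$ terms has at least one $\bar{E_j}$ factor ($j\in\{1,2\}$); relabeling the coordinates to place that factor in position $1$ (permissible because every $3$-projection of $\mc{S}$ is connected) and treating the other three factors as arbitrary $1$-bounded functions, the contrapositive of \cref{thm:4csp}, combined with the $(\gamma' n, \gamma')$-product pseudorandomness of $\bar{E_j}$ against the appropriate marginal in $\mc{Q}$, bounds the cross term in absolute value by $O((\log(1/\gamma'))^{-c})$. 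Summing the $15$ cross terms gives the lemma.

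The main obstacle is purely combinatorial: verifying that every $3$-projection of $\mc{S}$ is connected, so that \cref{thm:4csp} may be invoked no matter which coordinate carries the $\bar{E_j}$ factor. Once that is established, the remainder is the same expand-and-apply-\cref{thm:4csp} strategy used in \cref{lemma:eyz}.
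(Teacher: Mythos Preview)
Your proposal is correct and follows essentially the same approach as the paper: identify the support of $(\pi_1(y),\pi_2(y),\pi_1(y'),\pi_2(y'))$ as the five-element set $\mc S$, verify that every $3$-coordinate projection is connected, and then expand $E_i = \delta_i + \bar E_i$ and kill the cross terms via \cref{thm:4csp}. The only cosmetic difference is that the paper peels off one factor at a time (writing $E_1(y)=\delta_1+(E_1-\delta_1)(y)$, bounding the cross term, and then ``repeating the decomposition three more times''), whereas you expand all four factors simultaneously into $16$ terms; the two presentations are equivalent.
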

\begin{proof}
Write
\begin{align*}
\E_{y,y' \sim (\mu_1)_{yy'}^{\otimes n}} E_1(y)E_2(y)E_1(y')E_2(y') &= \E_{y,y' \sim (\mu_1)_{yy'}^{\otimes n}} \delta_1 E_2(y)E_1(y')E_2(y') \\
&+ \E_{y,y' \sim (\mu_1)_{yy'}^{\otimes n}} (E_1-\delta_1)(y)E_2(y)E_1(y')E_2(y').
\end{align*}
We claim that the term on the second line is bounded by $O((\log(1/\gamma'))^{-c})$. This follows from \cref{thm:4csp} once we verify that the corresponding $4$-ary distribution \[ (\pi_1(y), \pi_2(y), \pi_1(y'), \pi_2(y')) \in \{0, 1\} \times \{0, 2\} \times \{0, 1\} \times \{0, 2\} \] for $(y, y') \sim (\mu_1)_{yy'}$ is pairwise-connected and has connected marginals onto any $3$ of the coordinates, because $E_1 - \delta_1$ is product pseudorandom. By examining \cref{tab:tablemu1}, $(\pi_1(y), \pi_2(y), \pi_1(y'), \pi_2(y')) \in \{0,1\} \times \{0,2\} \times \{0,1\} \times \{0,2\}$ has $\Omega(1)$ mass on each of
\[ \{ (0,0,0,0), (1,0,1,0), (0,2,0,2), (0,2,1,0), (1, 0, 0, 2)) \}.\]
It can be checked that the projection of the distribution to any $3$ coordinates is connected. Now the lemma follows from repeating the decomposition three more times.
\end{proof}

The following lemma proves that $\omega(x,x',x'',x''')$ concentrates around $\delta_1^2\delta_2^2$.
\begin{lemma}
\label{lemma:omega}
It holds that
\[ \E_{(x,x',x'',x''') \sim \mu_2^{\otimes n}}\left[|\omega(x,x',x'',x''') - \delta_1^2\delta_2^2| \right] \le O((\log(1/\gamma'))^{-c}). \]
\end{lemma}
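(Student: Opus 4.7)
The plan is to reduce the claim to a second-moment estimate on $\omega$ via Cauchy-Schwarz, and then to evaluate both moments by peeling off $E_1$ and $E_2$ factors one at a time using \cref{thm:4csp}, exactly in the style of \cref{lemma:4cspy}. First I would write
\[ \E_{\vec{x}}\big[|\omega(\vec{x}) - \delta_1^2 \delta_2^2|\big]^2 \le \E_{\vec{x}}\big[\omega(\vec{x})^2\big] - 2\delta_1^2 \delta_2^2 \, \E_{\vec{x}}\big[\omega(\vec{x})\big] + \delta_1^4 \delta_2^4, \]
so it suffices to prove $\E_{\vec{x}}[\omega] = \delta_1^2 \delta_2^2 + O\bigl((\log(1/\gamma'))^{-c}\bigr)$ and $\E_{\vec{x}}[\omega^2] = \delta_1^4 \delta_2^4 + O\bigl((\log(1/\gamma'))^{-c}\bigr)$.

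Unrolling the definition of $\omega$, the first moment is an expectation of $E_1(y)E_1(y')E_2(z)E_2(z')$ against the marginal of $\mu_2^{\otimes n}$ on $(y,y',z,z')$, while, using that the two copies $(y,y',z,z')$ and $(y'',y''',z'',z''')$ are conditionally independent given $\vec{x}$, the second moment is an expectation of $E_1(y)E_1(y')E_1(y'')E_1(y''')E_2(z)E_2(z')E_2(z'')E_2(z''')$ against the joint distribution of the $\pi_1,\pi_2$ projections, which is exactly $\mu_3^{\otimes n}$ from \cref{tab:tablemu3}. In both cases I would substitute $E_1 = \delta_1 + \bar{E_1}$ and $E_2 = \delta_2 + \bar{E_2}$ one factor at a time: the leading $\delta_i$-piece passes through as a scalar, while the $\bar{E_i}$-piece leaves a residual correlation which I want to bound by $O\bigl((\log(1/\gamma'))^{-c}\bigr)$.

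For the first-moment residuals this is a direct application of \cref{thm:4csp} to the $4$-ary marginal of $\mu_2$ on $(\pi_1(y),\pi_1(y'),\pi_2(z),\pi_2(z'))$, with the required pairwise-connectivity and connectedness of the appropriate $3$-coordinate projection read off from \cref{tab:tablemu2}, in direct analogy with \cref{lemma:4cspy}. For the second-moment residuals, where up to seven remaining $1$-bounded factors sit alongside the exposed $\bar{E_i}$, I would fold all but three of those factors into a single $1$-bounded ``super-function'' of a single super-coordinate (this is legitimate because \cref{thm:4csp} only asks for a finite product structure $\A_1\times\A_2\times\A_3\times\A_4$ on the four coordinates) and apply \cref{thm:4csp} to the induced $4$-ary distribution on (the $\bar{E_i}$ coordinate, two chosen bookkeeping coordinates, the super-coordinate). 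Since $\bar{E_1},\bar{E_2}$ are $(\gamma' n,\gamma')$-product pseudorandom against every $\nu\in\mc{Q}$, each application of \cref{thm:4csp} yields an error of at most $O\bigl((\log(1/\gamma'))^{-c}\bigr)$.

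The hard part will be ensuring, for each of the eight possible locations of the exposed $\bar{E_i}$ in $\mu_3$, that the two bookkeeping coordinates can be chosen so that the resulting $4$-ary marginal has a connected $\mu_{123}$ with the $\bar{E_i}$ coordinate playing the role of the first coordinate. This amounts to a finite case check against \cref{tab:tablemu3}, wholly analogous to the verification performed inside the proof of \cref{lemma:4cspy}, and is the only step which genuinely uses the combinatorial structure of $\mu_3$. Once all eight residual terms are bounded and combined with the leading $\delta_1^4\delta_2^4$ piece, the Cauchy-Schwarz inequality from the opening step gives $\E_{\vec{x}}\big[|\omega - \delta_1^2 \delta_2^2|\big]^2 \le O\bigl((\log(1/\gamma'))^{-c}\bigr)$, which yields the claim after renaming $c$.
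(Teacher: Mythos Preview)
Your proposal is correct and takes a route that differs from the paper's in one substantive respect. Both arguments open the same way: Cauchy--Schwarz reduces the claim to a bound on $\E_{\vec x}[(\omega-\delta_1^2\delta_2^2)^2]$, and after expanding $E_i=\delta_i+\bar E_i$ one is left with residual $8$-ary correlations over $\mu_3$ in which at least one factor is a pseudorandom $\bar E_i$. (The paper expands the four factors inside the conditional expectation before squaring, so it never needs the first moment separately; your two-moment split is equivalent but costs the extra first-moment case check.) The real difference is in how the $8$-ary residuals are bounded. You propose to apply \cref{thm:4csp} directly by folding five of the remaining $\mu_3$-coordinates into a single super-coordinate $\A_4$, leaving a genuine $4$-ary product with the exposed $\bar E_i$ in position $1$ and two bookkeeping coordinates in positions $2,3$; this is legitimate since the support of $\mu_3$ has $O(1)$ atoms each of mass $\Omega(1)$, so the minimum-atom hypothesis of \cref{thm:4csp} survives. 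The paper instead applies one further Cauchy--Schwarz: it conditions on the six ``outer'' variables $(y',y''',z,z',z'',z''')$, resamples the inner pair $(y,y'')$, and obtains a genuine $4$-ary correlation over a new distribution $\mu_4$ on $(y,y'',\tilde y,\tilde y'')\in\{0,1\}^4$ with all four factors equal to $\bar E_1$; it then verifies the $3$-coordinate projections of $\mu_4$ are connected and invokes \cref{thm:4csp} once. Your approach trades the extra Cauchy--Schwarz for a slightly longer case analysis (one connectivity check per exposed coordinate of $\mu_3$, plus the first-moment checks), while the paper's approach concentrates all the combinatorics into the single distribution $\mu_4$ and never needs a super-alphabet. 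Both are valid and incur comparable quantitative losses.
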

\begin{proof}
By Cauchy-Schwarz, it suffices to bound
\begin{align*}
    &\E_{(x,x',x'',x''') \sim \mu_2^{\otimes n}}\left[|\omega(x,x',x'',x''') - \delta_1^2\delta_2^2|^2 \right] \\
    = ~&\E_{(x,x',x'',x''') \sim \mu_2^{\otimes n}}\left[\Big|\E_{y,y',z,z' | x,x',x'',x'''} E_1(y)E_1(y')E_2(z)E_2(z') - \delta_1^2\delta_2^2\Big|^2 \right] \\
    = ~& \E_{(x,x',x'',x''') \sim \mu_2^{\otimes n}}\left[\Bigg|\sum_{\substack{\bE_1, \bbE_1 \in \{\delta_1,E_1-\delta_1\} \\ \bE_2, \bbE_2 \in \{\delta_2, E_2-\delta_2\} \\ (\bE_1,\bbE_1,\bE_2,\bbE_2) \neq (\delta_1,\delta_1,\delta_2,\delta_2)}} \E_{y,y',z,z' | x,x',x'',x'''} \bE_1(y)\bbE_1(y')\bE_2(z)\bbE_2(z') \Bigg|^2 \right] \\
    \le ~& 15 \max \E_{x,x',x'',x'''} \E_{\substack{y,y',y'',y''' \\ z,z',z'',z'''} | x,x',x'',x'''} \bE_1(y)\bE_1(y'')\bbE_1(y')\bbE_1(y''')\bE_2(z)\bE_2(z'')\bbE_2(z')\bbE_2(z''') \\
    = ~& 15 \max \E_{\substack{y,y',y'',y''' \\ z,z',z'',z'''} \sim \mu_3^{\otimes n}} \bE_1(y)\bE_1(y'')\bbE_1(y')\bbE_1(y''')\bE_2(z)\bE_2(z'')\bbE_2(z')\bbE_2(z'''),
\end{align*}
where $\mu_3$ is the distribution in \cref{tab:tablemu3}. In the previous expression we mean that $y$, $y'$, $y''$, $y''' \in \{0, 1\}$ and $z, z', z'', z''' \in \{0, 2\}$ because the $y$'s and $z$'s are in $\bar{E_1}$ and $\bar{E_2}$ terms respectively.
Also, the $\max$ is over all $\bE_1, \bbE_1 \in \{\delta_1, E_1 - \delta_1\}$ and $\bE_2, \bbE_2 \in \{\delta_2, E_2 - \delta_2 \}$, except for $\bE_1 = \bbE_1 = \delta_1$ and $\bE_2 = \bbE_2 = \delta_2$. Thus, the first inequality in the above display follows by expanding $E_1 = \delta_1 + (E_1 - \delta_1)$ and $E_2 = \delta_2 + (E_2 - \delta_2)$ and applying Cauchy-Schwarz. We consider the case where $\bE_1 = E_1 - \delta_1$ -- the remaining cases are similar.
\begin{align}
    &\E_{\substack{y,y',y'',y''' \\ z,z',z'',z'''} \sim \mu_3^{\otimes n}} \bE_1(y)\bE_1(y'')\bbE_1(y')\bbE_1(y''')\bE_2(z)\bE_2(z'')\bbE_2(z')\bbE_2(z''') \nonumber \\
    \le ~& \E_{\substack{y',y''' \\ z,z',z'',z'''} \sim \mu_3^{\otimes n}} \left|\E_{y,y''|y',y''',z,z',z'',z'''} \bE_1(y)\bE_1(y'') \right| \nonumber \\
    \le ~& \left(\E_{\substack{y',y''' \\ z,z',z'',z'''} \sim \mu_3^{\otimes n}} \left|\E_{y,y''|y',y''',z,z',z'',z'''} \bE_1(y)\bE_1(y'') \right|^2\right)^{1/2} \nonumber \\
    = ~& \left(\E_{\substack{y',y''' \\ z,z',z'',z'''} \sim \mu_3^{\otimes n}} \E_{\substack{y,y'' \\ \wt{y}, \wt{y}''} |y',y''',z,z',z'',z'''} \bE_1(y)\bE_1(y'')\bE_1(\wt{y})\bE_1(\wt{y}'') \right)^{1/2} \nonumber \\
    = ~& \left(\E_{(y,y'',\wt{y}, \wt{y}'') \sim \mu_4^{\otimes n}}  \bE_1(y)\bE_1(y'')\bE_1(\wt{y})\bE_1(\wt{y}'') \right)^{1/2}, \label{eq:finalomega}
\end{align}
for a distribution $\mu_4$ over $4$-tuples $(y,y'',\wt{y},\wt{y}'')$ that has mass $\Omega(1)$ on:
\[ \{ (0,0,0,0), (1,1,1,1), (0,1,0,1), (1,0,1,0), (0,0,1,1), (1,1,0,0) \}, \]
which can be seen by inspecting \cref{tab:tablemu3}. Specifically, $(0,0,0,0)$, $(1,1,1,1)$, $(0,1,0,1)$, and $(1,0,1,0)$ come from using rows $1, 2, 9, 10$ of \cref{tab:tablemu3} respectively, and $(0,0,1,1)$, $(1,1,0,0)$ come from combining rows $4$ and $6$. It can be checked that the projection of the distribution to any $3$ coordinates is connected and hence satisfies the hypotheses of \cref{thm:4csp}, and thus the quantity in \eqref{eq:finalomega} is bounded by $O((\log(1/\gamma'))^{-c})$ because $\bE_1$ is product pseudorandom. One can check that the marginals of $\mu_2$ are all $\mu_x$ due to how the sequence of Cauchy-Schwarz manipulations were applied. This completes the proof.
\end{proof}

\subsection{Random Restrictions for a Density Increment}
\label{subsec:rrdensity}

In this subsection we will take random restrictions of \eqref{eq:fakebox} to obtain a density increment, for $f := S' - \alpha E_1' \boxtimes E_2'$. Note that $f$ is nonzero on only about a $\mu(E_1' \boxtimes E_2') \approx \delta_1\delta_2$ fraction of $x$. Below we argue that when we restrict to these $x$ that the measures of $S'(x''')$ and $E_1' \boxtimes E_2'(x''')$ are not affected much.
\begin{lemma}
\label{lemma:smallmeasure}
Let $\mu$ be the marginal of $\mu_2$ onto any of $x, x', x'', x'''$. Consider $S \subseteq E_1 \boxtimes E_2 \subseteq [3]^n$ such that $\mu(E_1) = \delta_1$, $\mu(E_2) = \delta_2$, such that $E_1 - \delta_1$ and $E_2 - \delta_2$ are $(\gamma' n, \gamma')$ product pseudorandom. Then
\[ \left|\E_{(x,x''') \sim \mu_2^{\otimes n}}\left[E_1(x)E_2(x)S(x''') \right] - \delta_1\delta_2\mu(S) \right| \le O(\log(1/\gamma')^{-c}), \]
and
\[ \left|\E_{(x,x''') \sim \mu_2^{\otimes n}}\left[E_1(x)E_2(x)E_1(x''')E_2(x''') \right] - \delta_1^2\delta_2^2 \right| \le O(\log(1/\gamma')^{-c}). \]
\end{lemma}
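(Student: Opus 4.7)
The plan is to decompose $E_i = \delta_i + \bE_i$ with $\bE_i := 1_{E_i} - \delta_i$, which by hypothesis is $(\gamma' n, \gamma')$ product pseudorandom. Expanding either product in this decomposition yields the claimed main term ($\delta_1\delta_2\mu(S)$ or $\delta_1^2\delta_2^2$) plus error terms, each containing at least one factor of $\bE_1$ or $\bE_2$. The target is to bound every error term by $O(\log(1/\gamma')^{-c})$ via \cref{thm:4csp} applied to the $4$-ary distribution of $(\pi_1(x), \pi_2(x), \pi_1(x'''), \pi_2(x'''))$ under $\mu_2$.

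Reading this distribution off \cref{tab:tablemu2} gives support $\{(0,0,0,0),(1,0,1,0),(0,2,0,2),(1,0,0,0),(0,0,1,0)\}$, with each atom of mass $\Omega(1)$. A direct check shows that the $3$-wise projections $\mu_{123}$ and $\mu_{134}$ are connected, while $\mu_{124}$ and $\mu_{234}$ are not. Together $\mu_{123}$ and $\mu_{134}$ cover all four coordinates, so by relabeling coordinates and applying \cref{thm:4csp}, one can conclude that $f_i$ fails to be product pseudorandom whenever the $4$-ary correlation is non-negligible, for every $i \in \{1,2,3,4\}$. This immediately handles the second inequality: every error term has some slot occupied by a product-pseudorandom $\bE_j$, so \cref{thm:4csp} forces that correlation to be $O(\log(1/\gamma')^{-c})$.

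For the first inequality, $S(x''')$ depends on both $\pi_1(x''')$ and $\pi_2(x''')$ and is not a product function, so \cref{thm:4csp} cannot be applied directly. I would first apply Cauchy-Schwarz in the $x'''$ direction to eliminate $S$:
\[
|\E[F_1(\pi_1(x))F_2(\pi_2(x)) S(x''')]|^2 \le \E\bigl[F_1(\pi_1(x))F_2(\pi_2(x))F_1(\pi_1(\tilde x))F_2(\pi_2(\tilde x))\bigr],
\]
where $(x, \tilde x)$ are conditionally independent given $x'''$ and $|S| \le 1$. A short computation of this conditional distribution (namely $x = \tilde x = 2$ when $x''' = 2$, and $x, \tilde x \in \{0,1\}$ i.i.d.\ otherwise) shows that the resulting $4$-ary distribution on $(\pi_1(x), \pi_2(x), \pi_1(\tilde x), \pi_2(\tilde x))$ has exactly the same support as above, hence the same connected $3$-wise projections. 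Bounding the duplicated correlation exactly as in the second inequality and taking square roots yields the claimed $O(\log(1/\gamma')^{-c})$ bound.

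The main obstacle is the connectivity matching just described: since $\mu_{124}$ and $\mu_{234}$ are disconnected, one has to check, for every error term, that some $\bE_j$-slot lies in either $\mu_{123}$ or $\mu_{134}$. This works precisely because these two projections together cover all four coordinates, so a single connectivity verification suffices for both parts of the lemma.
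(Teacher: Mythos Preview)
Your reading of the support of $(\pi_1(x),\pi_2(x),\pi_1(x'''),\pi_2(x'''))$ is incomplete: rows $5$ and $6$ of \cref{tab:tablemu2} give $(x,x''')=(0,2)$ and $(2,0)$, contributing the atoms $(0,0,0,2)$ and $(0,2,0,0)$, which you omitted. With these two atoms included, a direct check shows that \emph{every} $3$-coordinate projection is connected, so the ``obstacle'' you describe (that $\mu_{124}$ and $\mu_{234}$ are disconnected) does not exist. Your second-inequality argument then goes through cleanly via \cref{thm:4csp}, exactly as in the paper. Relatedly, your description of the conditional law of $x$ given $x'''$ is wrong: for instance when $x'''=2$ one has $x\in\{0,2\}$, and when $x'''=0$ one has $x\in\{0,1,2\}$, not the dichotomy you wrote.

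For the first inequality you take an unnecessary detour. The paper simply treats $S$ as a $1$-bounded function on $[3]^n$ and applies the $3$-ary inverse theorem \cref{thm:3csp} to the distribution of $(\pi_1(x),\pi_2(x),x''')$ over $\{0,1\}\times\{0,2\}\times[3]$, whose support
\[
\{(0,0,0),(1,0,1),(0,2,2),(0,0,2),(0,2,0),(1,0,0),(0,0,1)\}
\]
is pairwise-connected. After expanding $E_1=\delta_1+\bE_1$ and $E_2=\delta_2+\bE_2$, each error term is a $3$-ary correlation with a product-pseudorandom factor in one slot, and \cref{thm:3csp} bounds it directly --- no Cauchy--Schwarz is needed. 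Your route via Cauchy--Schwarz can in principle be made to work once the correct conditional distributions are computed, but it is strictly more complicated and, as written, rests on an incorrect support and an incorrect conditional law.
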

\begin{proof}
By inspecting $\mu_2$, the corresponding distribution $(\pi_1(x), \pi_2(x), x''')$ is supported on
\[ \{(0,0,0), (1,0,1), (0,2,2), (0,0,2), (0,2,0), (1,0,0), (0,0,1) \}, \] which can easily be seen to be pairwise-connected. Thus the first result follows from \cref{thm:3csp}, after expanding $E_1 = \delta_1 + (E_1 - \delta_1)$ and $E_2 = \delta_2 + (E_2 - \delta_2)$.

For the second result, the corresponding distribution $(\pi_1(x), \pi_2(x), \pi_1(x'''), \pi_2(x'''))$ has support
\[ \{(0,0,0,0), (1,0,1,0), (0,2,0,2), (0,0,0,2), (0,2,0,0), (1,0,0,0), (0,0,1,0) \}, \]
which is again easily seen to be connected when restricted to any $3$ coordinates. Thus the result follows from \cref{thm:4csp} after expanding $E_1 = \delta_1 + (E_1 - \delta_1)$ and $E_2 = \delta_2 + (E_2 - \delta_2)$.
\end{proof}

Finally we show how to obtain a density increment if $S$ has no combinatorial lines.
\begin{theorem}
\label{thm:finalincr}
Let $\mu$ denote the uniform distribution on $[3]$. Let $S \subseteq E_1 \boxtimes E_2 \subseteq [3]^n$ such that $\mu(E_1) = \delta_1$, $\mu(E_2) = \delta_2$, and $\mu(S) = \alpha \mu(E_1 \boxtimes E_2)$. Assume that $1_{E_1} - \delta_1$ and $1_{E_2} - \delta_2$ are $(n^{1/4}, \gamma)$ product pseudorandom, and that $S$ has no combinatorial lines. Then there is some $S' \subseteq E_1' \boxtimes E_2' \subseteq [3]^{n'}$ satisfying:
\begin{enumerate}
    \item $n' \ge n^{1/3}$,
    \item $\mu(S') \ge (\alpha + \Omega(\alpha^{15}))\mu(E_1' \boxtimes E_2')$ and $\mu(E_1' \boxtimes E_2') \ge \Omega(\alpha^{12}\delta_1\delta_2)$,
    \item If $S'$ has a combinatorial line, then $S$ does too.
\end{enumerate}
\end{theorem}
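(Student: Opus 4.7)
The plan proceeds in three phases.

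First, apply \cref{lemma:atom} with parameter $\wt{\tau} = c \alpha^{12}$ for a sufficiently small absolute constant $c > 0$. If the lemma's Case~1 is triggered, the returned triple $(S', E_1', E_2')$ already satisfies $\mu(S') \ge (\alpha + \alpha^3 \wt{\tau}/1000) \mu(E_1' \boxtimes E_2') = (\alpha + \Omega(\alpha^{15})) \mu(E_1' \boxtimes E_2')$, with $n' \ge n^{2/5} \ge n^{1/3}$ and $\mu(E_1' \boxtimes E_2') \ge \delta_1 \delta_2 / 2 \ge \Omega(\alpha^{12} \delta_1 \delta_2)$, satisfying the three desired conclusions. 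Henceforth assume Case~2, which yields $(S', E_1', E_2')$ with $n' \ge n^{2/5}$, pseudorandomness of $1_{E_1'} - \delta_1$ and $1_{E_2'} - \delta_2$ against every distribution in $\mc{Q}$, and a three-wise correlation of size $\alpha^3 \delta_1^2 \delta_2^2 / 4$ over the distribution $\nu$ supported on $(0,0,0), (1,1,1), (2,2,2), (0,1,2)$.

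Second, I invoke \cref{thm:incr} with $f = g = h := 1_{S'} - \alpha 1_{E_1' \boxtimes E_2'}$. The distribution $\nu$ has rational masses $\{1/6, 1/3, 1/3, 1/6\}$, so its marginal $\nu_x$ lies in $\mc{Q}$, and each $f$ is $1$-bounded and supported on $E_1' \boxtimes E_2'$; hence all hypotheses of \cref{thm:incr} are met. This yields the ``box-norm'' inequality
\[ \E_{(x,x',x'',x''') \sim \mu_2^{\otimes n'}}[f(x) f(x') f(x'') f(x''')] \ge \alpha^{12} \delta_1^2 \delta_2^2 / 2^{10}. \]

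Third, I convert the box-norm into a density increment. Rewriting
\[ \alpha^{12} \delta_1^2 \delta_2^2 / 2^{10} \le \E_{x', x''}\Big[f(x') f(x'') \cdot \E_{x, x''' \mid x', x''}[f(x) f(x''')]\Big], \]
I use $|f(x') f(x'')| \le 1$ together with Markov's inequality to fix some $(x', x'')$ in the support for which $|\E_{x, x''' \mid x', x''}[f(x) f(x''')]| \ge \Omega(\alpha^{12} \delta_1^2 \delta_2^2)$. The conditional distribution of $(x, x''')$ given $(x', x'')$ is a product distribution $\xi^{\otimes n'}$ on $([3] \times [3])^{n'}$ whose pairwise projection is connected (verifiable from \cref{tab:tablemu2}). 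Expanding $f = 1_{S'} - \alpha 1_{E_1' \boxtimes E_2'}$ in both the $x$ and $x'''$ slots and applying \cref{lemma:smallmeasure} to absorb the three ``mixed'' cross-terms ultimately identifies a sub-rectangle $E_1'' \boxtimes E_2'' \subseteq E_1' \boxtimes E_2'$ of measure at least $\Omega(\alpha^{12} \delta_1 \delta_2)$ on which $S'$ has relative density at least $\alpha + \Omega(\alpha^{15})$. A final harsh random restriction (as in \cref{lemma:projpseudo}) then brings the measure to the uniform distribution on $[3]^{n''}$ for some $n'' \ge n^{1/3}$, completing the proof.

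The hard part will be Phase~3: translating the four-wise correlation into a genuine one-sided density increment on a combinatorial sub-rectangle. A naive attempt to peel off three copies of $f$ fails, because $x$ is coordinate-wise a \emph{deterministic} function of $(x', x'', x''')$ under $\mu_2$ (inspection of \cref{tab:tablemu2}), making the conditional expectation over $x$ trivial. Peeling off only two factors gives a non-degenerate 2-wise correlation over the $(x, x''')$-marginal; expanding the bilinear $f(x) f(x''')$ term-by-term and using \cref{lemma:smallmeasure} to discard the ``mixed'' averages is precisely why that lemma was proved. Additional care is needed to handle the sign of the increment (flipping to the complement of $S'$ within $E_1' \boxtimes E_2'$ if the inequality initially points the wrong way) and to shepherd the product measure $\xi^{\otimes n'}$ back to the uniform distribution at the very end without losing too many coordinates.
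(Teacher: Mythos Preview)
Phases~1 and~2 are fine and match the paper's argument.  Phase~3 breaks down at its very first step.  You propose to condition on $(x',x'')$ and analyze the residual two-wise correlation $\E_{x,x'''\mid x',x''}[f(x)f(x''')]$.  But inspection of \cref{tab:tablemu2} shows that each atom of $(x'_i,x''_i)$ in the support of the marginal of $\mu_2$ determines $(x_i,x'''_i)$ uniquely (rows~1 and~4 both give $(0,0)\mapsto(0,0)$; every other value of $(x',x'')$ appears in exactly one row).  Hence conditioning on $(x',x'')$ makes $(x,x''')$ deterministic, and the ``conditional expectation'' collapses to a single evaluation $f(x)f(x''')$; there is no two-wise correlation to exploit, and no averaging from which a sub-rectangle could emerge.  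You noticed the analogous degeneracy when peeling off three factors; it unfortunately also occurs for the pair $(x',x'')$.  Even setting this aside, the sketch never says where the sub-rectangle $E_1''\boxtimes E_2''$ comes from --- expanding $f(x)f(x''')$ into four terms and invoking \cref{lemma:smallmeasure} produces scalar estimates, not a $01$-insensitive/$02$-insensitive pair.  The ``flip to the complement'' idea for fixing signs is also a problem: a density increment for $(E_1'\boxtimes E_2')\setminus S'$ is not a density increment for $S'$.

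The paper's Phase~3 is genuinely different.  Instead of conditioning on two of the four variables, it writes $\mu_2 = \beta\nu + (1-\beta)\nu'$ with $\nu$ uniform on the three atoms $(0,0,0,0),(0,2,0,2),(0,0,1,1)$, and observes that $\nu$ is the law of $(0,\pi_2(y),\pi_1(y),y)$ for uniform $y\in[3]$.  A random restriction along $\nu'$ therefore turns the four factors into: a constant $f_{I\to z}(0)$ (condition on it being nonzero, which happens with probability $\approx\delta_1\delta_2$), a $02$-insensitive function $f_{I\to z'}(\pi_2(y))$, a $01$-insensitive function $f_{I\to z''}(\pi_1(y))$, and $f_{I\to z'''}(y)$.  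Because $f$ takes only the values $\{-\alpha,0,1-\alpha\}$, the level sets of the second and third factors give a partition of $(E_1')_{I\to z'''}\boxtimes(E_2')_{I\to z'''}$ into four disjoint products $F_1^\pm\boxtimes F_2^\pm$; this is where the sub-rectangle comes from.  The sign issue is handled not by complementation but by combining the signed average (bounded below via \cref{lemma:smallmeasure}) with the absolute-value average (bounded below by the box-norm inequality), yielding a realization of $X=\mu(S'_{I\to z'''}\cap(F_1\boxtimes F_2))-(\alpha+\Omega(\alpha^{12}))\mu(F_1\boxtimes F_2)$ that is both nonnegative and at least $\Omega(\alpha^{12}\delta_1\delta_2)$.
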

\begin{proof}
Set $\wt{\tau} = c\alpha^{12}$ for sufficiently small constant $c$. Now we apply \cref{lemma:atom}. If Case 1 of \cref{lemma:atom} holds, then we are done by the choice of $\wt{\tau}$. Otherwise, Case 2 holds. In this case we may apply \cref{thm:incr} to get
\[ \E_{(x,x',x'',x''') \sim (\mu_2)^{\otimes n'}}\Big[f(x)f(x')f(x'')f(x''') \Big] \ge \alpha^{12}\delta_1^2\delta_2^2/2^{10}, \]
for $f = 1_{S'} - \alpha 1_{E_1' \boxtimes E_2'}$ for some $\mu(S') \ge (\alpha - \wt{\tau})\mu(E_1' \boxtimes E_2')$. Let $\nu$ be uniform over $(0,0,0,0)$, $(0,2,0,2)$, $(0,0,1,1)$ and write $\mu_2 = \beta\nu + (1-\beta)\nu'$ for some $\beta = \Omega(1)$ and a distribution $\nu'$. Note that $\nu$ is equivalent to $(0,\pi_2(y), \pi_1(y), y)$ for $y \sim [3]$.

Taking this random restriction gives
\[ \E_{\substack{I \sim_{1-\beta} [n'] \\ (z,z',z'',z''') \sim (\nu')^{\otimes I}}} \E_{y \sim [3]^{([n'] \setminus I)}}\Big[f_{I \to z}(0)f_{I \to z'}(\pi_2(y))f_{I \to z''}(\pi_1(y))f_{I \to z'''}(y) \Big] \ge \alpha^{12}\delta_1^2\delta_2^2/2^{10}. \]
Let $\mc{E}$ be the event that $f_{I \to z}(0) \neq 0$, i.e., that $\pi_1(z,0) \in E_1'$ and $\pi_2(z,0) \in E_2'$, where by $(z, 0) \in [3]^{n'}$ we mean that $z$ is assigned to the coordinates in $I$, and $0$ is assigned to the coordinates in $\bar{I}$.

Recall that \cref{lemma:me1e2} gives us $\left|\mu(E_1' \boxtimes E_2') - \delta_1\delta_2\right| \le O((\gamma')^c)$. Thus, conditioning on $\mc{E}$ and using $|f_{I \to z}(0)| \le 1$ gives
\begin{align}
    \E_{\substack{I \sim_{1-\beta} [n'] \\ (z,z',z'',z''') \sim (\nu')^{\otimes I} \\ \mc{E} \text{ holds}}} \left|\E_{y \sim [3]^{([n'] \setminus I)}}\Big[f_{I \to z'}(\pi_2(y))f_{I \to z''}(\pi_1(y))f_{I \to z'''}(y) \Big] \right| \ge \alpha^{12}\delta_1\delta_2/2^{11}. \label{eq:abseq}
\end{align}
For the following sequence of equations, the first equality is by the definition of conditioned on $\mc{E}$, the second equality is because $\mu$ is the uniform measure, the third equality is by properties of random restriction, and the final inequalities apply \cref{lemma:smallmeasure} and $|\Pr[\mc{E}] - \delta_1\delta_2| \le O(\gamma^c)$:
\begin{align*}
    &\E_{\substack{I \sim_{1-\beta} [n'] \\ (z,z',z'',z''') \sim (\nu')^{\otimes I} \\ \mc{E} \text{ holds}}} \left[\mu(S'_{I \to z'''}) - (\alpha - \wt{\tau} + \alpha^{12}/2^{14})\mu((E_1')_{I \to z'''} \boxtimes (E_2')_{I \to z'''}) \right] \\
    = ~& \frac{1}{\Pr[\mc{E}]} \E_{\substack{I \sim_{1-\beta} [n'] \\ (z,z',z'',z''') \sim (\nu')^{\otimes I}}} \Bigg[(E_1')_{I \to z}(0)(E_2')_{I \to z}(0)\Big(\mu(S'_{I \to z'''}) - (\alpha - \wt{\tau} + \alpha^{12}/2^{14})\mu((E_1')_{I \to z'''} \boxtimes (E_2')_{I \to z'''})\Big) \Bigg]
    \\ = ~& \frac{1}{\Pr[\mc{E}]} \E_{\substack{I \sim_{1-\beta} [n'] \\ (z,z',z'',z''') \sim (\nu')^{\otimes I}}} \E_{(y,y',y'',y''') \sim \nu^{\otimes \bar{I}}} \Bigg[(E_1')_{I \to z}(0)(E_2')_{I \to z}(0)S'_{I \to z'''}(y''') \\
    ~& - (\alpha - \wt{\tau} + \alpha^{12}/2^{14}) (E_1')_{I \to z}(0)(E_2')_{I \to z}(0)(E_1')_{I \to z'''}(y''') (E_2')_{I \to z'''}(y''')\Big) \Bigg]
    \\ = ~& \frac{1}{\Pr[\mc{E}]} \E_{(x,x''') \sim \mu_2^{\otimes n}}\left[E_1'(x)E_2'(x)S'(x''') - (\alpha-\wt{\tau}+\alpha^{12}/2^{14})E_1'(x)E_2'(x)E_1'(x''')E_2'(x''') \right] \\
    \ge ~& -\alpha^{12}\delta_1\delta_2/2^{14} - O((\log(1/\gamma')^{-c})) \ge -\alpha^{12}\delta_1\delta_2/2^{13}.
\end{align*}
Looking at the expectation on the left hand side of~\eqref{eq:abseq}, we observe that the first two
terms are analogous to new disjoint products 
that may be correlated with a restriction of $f$. 
Indeed, we note that as $f$ receives two values in $E_1'\boxtimes E_2'$, the first two terms 
form a partition of $E_1'\boxtimes E_2'$ into four
disjoint products, and~\eqref{eq:abseq} asserts that
$f$ is correlated with this partition.

We now formalize this idea. Note that $f(x) \in \{-\alpha, 0, 1-\alpha \}$, with $f(x) \neq 0$ if and only if $x \in E_1' \boxtimes E_2'$. For fixed $(z',z'')$ define
\[ F_1^+ := \{y \in \{0,1\}^{[n] \setminus I} : f_{I \to z''}(y) = 1-\alpha \} \enspace \text{ and } \enspace F_1^- := \{y \in \{0,1\}^{[n] \setminus I} : f_{I \to z''}(y) = -\alpha\}. \] Define
\[ F_2^+ := \{y \in \{0,2\}^{[n] \setminus I} : f_{I \to z'}(y) = 1-\alpha \} \enspace \text{ and } \enspace F_2^- := \{ y \in \{0,2\}^{[n] \setminus I} : f_{I \to z'}(y) = -\alpha \}. \] By examining $\mu_2$ one can check that $F_1^+ \cup F_1^- = (E_1')_{I \to z'''}$ an $F_2^+ \cup F_2^- = (E_2')_{I \to z'''}$.

Now the above equation gives us:
\begin{align}
    &\E_{\substack{I \sim_{1-\beta} [n'] \\ (z,z',z'',z''') \sim (\nu')^{\otimes I} \\ \mc{E} \text{ holds}}} \E_{\substack{F_1 \in \{F_1^+, F_1^-\} \\ F_2 \in \{F_2^+, F_2^-\}}} \left[\mu(S'_{I \to z'''} \cap (F_1 \boxtimes F_2)) - (\alpha - \wt{\tau} + \alpha^{12}/2^{14})\mu(F_1 \boxtimes F_2) \right] \nonumber \\
    = ~& \frac14 \E_{\substack{I \sim_{1-\beta} [n'] \\ (z,z',z'',z''') \sim (\nu')^{\otimes I} \\ \mc{E} \text{ holds}}} \left[\mu(S'_{I \to z'''}) - (\alpha - \wt{\tau} + \alpha^{12}/2^{14})\mu((E_1')_{I \to z'''} \boxtimes (E_2')_{I \to z'''}) \right] \nonumber
    \\ \ge ~& -\alpha^{12}\delta_1\delta_2/2^{15}. \label{eq:noabseq}
\end{align}
$f_{I \to z''}$ is constant on any $F_1 \in \{F_1^+, F_1^-\}$, and we denote its value by $f_{I \to z''}(F_1)$. We similarly define $f_{I \to z'}(F_2)$. Applying the triangle inequality, and then \eqref{eq:abseq} and \cref{lemma:smallmeasure} gives
\begin{align}
    &\E_{\substack{I \sim_{1-\beta} [n'] \\ (z,z',z'',z''') \sim (\nu')^{\otimes I} \\ \mc{E} \text{ holds}}} \E_{\substack{F_1 \in \{F_1^+, F_1^-\} \\ F_2 \in \{F_2^+, F_2^-\}}} \left|\mu(S'_{I \to z'''} \cap (F_1 \boxtimes F_2)) - (\alpha - \wt{\tau} + \alpha^{12}/2^{14})\mu(F_1 \boxtimes F_2) \right|  \nonumber \\
    \ge ~& \E_{\substack{I \sim_{1-\beta} [n'] \\ (z,z',z'',z''') \sim (\nu')^{\otimes I} \\ \mc{E} \text{ holds}}} \left|\E_{\substack{F_1 \in \{F_1^+, F_1^-\} \\ F_2 \in \{F_2^+, F_2^-\}}} f_{I \to z'}(F_2)f_{I \to z''}(F_1)\left(\mu(S'_{I \to z'''} \cap (F_1 \boxtimes F_2)) - \alpha \cdot \mu(F_1 \boxtimes F_2)\right) \right| \nonumber \\
    - ~& \frac{\alpha^{12}}{2^{14}}\E_{\substack{I \sim_{1-\beta} [n'] \\ (z,z',z'',z''') \sim (\nu')^{\otimes I} \\ \mc{E} \text{ holds}}} \E_{\substack{F_1 \in \{F_1^+, F_1^-\} \\ F_2 \in \{F_2^+, F_2^-\}}} \mu(F_1 \boxtimes F_2) \nonumber \\
    = ~&
    \frac14 \E_{\substack{I \sim_{1-\beta} [n'] \\ (z,z',z'',z''') \sim (\nu')^{\otimes I} \\ \mc{E} \text{ holds}}} \left|\E_{y \sim [3]^{([n'] \setminus I)}}\Big[f_{I \to z'}(\pi_2(y))f_{I \to z''}(\pi_1(y))f_{I \to z'''}(y) \Big] \right| \nonumber \\
    - ~& \frac{1}{\Pr[\mc{E}]} \cdot \alpha^{12}/2^{16} \cdot \E_{(x,x''') \sim \mu_2^{\otimes n}}\left[E_1(x)E_2(x)E_1(x''')E_2(x''') \right] \nonumber
    \\ \ge ~& \alpha^{12}\delta_1\delta_2/2^{13} - \alpha^{12}\delta_1\delta_2/2^{15} \ge \alpha^{12}\delta_1\delta_2/2^{14}. \label{eq:abseq2}
\end{align}
Consider the random variable, defined on $I \sim_{1-\beta}[n']$, $(z,z',z'',z''') \sim (\nu')^{\otimes I}$, and 
$F_1 \in \{F_1^+, F_1^-\}$, $F_2 \in \{F_2^+, F_2^-\}$,
\[ X = \mu(S'_{I \to z'''} \cap (F_1 \boxtimes F_2)) - (\alpha - \wt{\tau} + \alpha^{12}/2^{14})\mu(F_1 \boxtimes F_2). \]
\eqref{eq:noabseq} and \eqref{eq:abseq2} together imply that \[ \E[X + |X|] \ge -\alpha^{12}\delta_1\delta_2/2^{15} + \alpha^{12}\delta_1\delta_2/2^{14} = \alpha^{12}\delta_1\delta_2/2^{15}.\] Hence there is a realization of $X \ge 0$ with $X = \frac12(X+|X|) \ge \alpha^{12}\delta_1\delta_2/2^{16}$. By the definition of $X$, there is some choice of $(z,z',z'',z''')$ and $F_1, F_2$ such that
\[ \mu(S'_{I \to z'''} \cap (F_1 \boxtimes F_2)) \ge (\alpha - \wt{\tau} + \alpha^{12}/2^{14})\mu(F_1 \boxtimes F_2) + \Omega(\alpha^{12}\delta_1\delta_2). \]
This implies the conditions because $\beta n' \gg n^{1/3}$, $\wt{\tau} = c\alpha^{12}$ for small constant $c$, and
\[ \mu(F_1 \boxtimes F_2) \ge \mu(S'_{I \to z'''} \cap (F_1 \boxtimes F_2)) \ge \Omega(\alpha^{12}\delta_1\delta_2). \]
\end{proof}

\section{Uniformization}
\label{sec:uniform}

In this section we explain how to start with a subset $S \subseteq E_1 \boxtimes E_2$ of density $\alpha$, where $E_1, E_2$ are not necessarily product pseudorandom in the sense of \cref{def:prodpseudo}, and to reach a situation where $E_1, E_2$ are, without decreasing the measure of $S$ or $n$ by too much. The proof will use two types of random restrictions: the standard one introduced in \cref{def:rr} as well as one which fixes many coordinates to take the same value (see \cref{def:same}). Note that applying such restrictions cannot create new combinatorial lines.

\begin{theorem}
\label{thm:uniform}
Let $S \subseteq E_1 \boxtimes E_2 \subseteq [3]^n$ with $\mu(E_1 \boxtimes E_2) = \delta$ and $\mu(S) \ge (\alpha+\tau)\delta$, and $0 < \gamma < (\delta\tau)^{10}$. There is some $n' \ge n^{\exp(-O(\delta^{-1}\tau^{-1}\gamma^{-4}))}$ and $S' \subseteq E_1' \boxtimes E_2' \subseteq [3]^{n'}$ such that:
\begin{enumerate}
    \item $\mu(E_1' \boxtimes E_2') \ge \delta\tau/3$.
    \item $(S', E_1', E_2') \in \struct_{\alpha+\tau/2}$.
    \item If $S'$ contains a combinatorial line, then $S$ does too.
\end{enumerate}
\end{theorem}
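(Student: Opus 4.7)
The plan is an iterative refinement. We maintain a triple $(S^{(t)}, E_1^{(t)}, E_2^{(t)}) \subseteq [3]^{n_t}$ with parameter $\tau_t \ge \tau/2$ satisfying $\mu(S^{(t)}) \ge (\alpha + \tau_t)\mu(E_1^{(t)} \boxtimes E_2^{(t)})$, $\mu(E_1^{(t)} \boxtimes E_2^{(t)}) \ge \delta\tau/3$, and the property that every combinatorial line in $S^{(t)}$ lifts to one in $S$. We stop once both $1_{E_1^{(t)}}-\mu(E_1^{(t)})$ and $1_{E_2^{(t)}}-\mu(E_2^{(t)})$ are $(n_t^{1/4}, \gamma)$-product pseudorandom, at which point $(S^{(t)}, E_1^{(t)}, E_2^{(t)}) \in \struct_{\alpha+\tau/2}$ by definition.

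If in the current step, say, $1_{E_1^{(t)}}-\mu(E_1^{(t)})$ fails product pseudorandomness, then by \cref{def:prodpseudo} there is a scale $\delta' \in [n_t^{-3/4}, 1]$ for which, with probability $\ge \gamma$ over $I \sim_{1-\delta'}[n_t]$ and $z \sim \mu^I$, the restriction admits a $\gamma$-correlation with a $1$-bounded product function $\prod_{i \in \bar I} P_i$. Fix such an $(I, z)$. Expanding each $P_i$ on $\{0,1\}$ as $P_i(x) = a_i + b_i x$ opens the product as a weighted sum of cylinder indicators $1_{\{x_i=1,\, i \in T\}}$; pigeonholing among these cylinders (as in the density-increment arguments of~\cite{BKM3ap,csp6} for restricted $3$-APs) yields a coordinate-fixing subcube $C$ on which $E_1^{(t)}$ has density at least $\mu(E_1^{(t)}) + \gamma^{O(1)}$. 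Restricting $S^{(t)}, E_1^{(t)}, E_2^{(t)}$ to $C$ (an operation of the type in \cref{def:rr}, so the combinatorial-line lifting property is preserved) produces a new dimension $n_{t+1} \ge n_t^{1/4}$, and an averaging argument over the available cylinders lets us pick $C$ so that the relative density of $S$ in $E_1 \boxtimes E_2$ drops by at most $O(\gamma^{O(1)})$.

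Termination follows from a potential argument: set $\Phi_t := -\log\bigl(\mu(E_1^{(t)}) \mu(E_2^{(t)})\bigr)$. By \cref{lemma:me1e2} and the invariant $\mu(E_1^{(t)} \boxtimes E_2^{(t)}) \ge \delta\tau/3$, we have $\Phi_t \le O(\log(1/(\delta\tau)))$. Each iteration increases $\Phi_t$ by $\Omega(\gamma^{O(1)})$ due to the density increment, so the process halts after $K = O(\delta^{-1}\tau^{-1}\gamma^{-4})$ steps. Since each step shrinks the dimension by at most $n \mapsto n^{1/4}$, the final dimension is $n_K \ge n^{4^{-K}} = n^{\exp(-O(\delta^{-1}\tau^{-1}\gamma^{-4}))}$, as required.

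The main obstacle is ensuring the relative density of $S$ in $E_1 \boxtimes E_2$ does not plummet when we pass to the subcube $C$ on which $E_1$'s density has risen: on any single $C$ this may fail, but an averaging argument (possibly weighted by the sign of the correlating product function) should allow us to choose $C$ so that the density increment of $E_1$ is achieved and $S$'s relative density is nearly preserved. A secondary difficulty is calibrating the exponents so that the per-step increment in $\Phi_t$ is compatible with the $\gamma^{-4}$ dependence in the bound on $K$, while the cumulative loss $K \cdot O(\gamma^{O(1)})$ in relative density stays below $\tau/2$; the assumption $\gamma < (\delta\tau)^{10}$ provides enough slack to afford this.
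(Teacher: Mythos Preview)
Your plan diverges from the paper's argument in a way that leaves a real gap. The paper does \emph{not} maintain a single triple and increment the density of $E_1$ (or $E_2$) along it. Instead it maintains a \emph{distribution} $\xi^{(t)}$ over triples and tracks the index $\mathcal{I}(\xi)=\E_\xi[\mu(E_1')^2+\mu(E_2')^2]\le 2$. The key one-round lemma shows that if $1_{E_1}-\mu(E_1)$ fails product pseudorandomness, one can replace $(S,E_1,E_2)$ by a distribution over restricted triples for which the \emph{expectations} of $\mu(S')$, $\mu(E_1'\boxtimes E_2')$, $\mu(E_i')$ are all nearly unchanged, yet $\E[\mu(E_1')^2]$ jumps by $\Omega(\gamma^4)$. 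This variance increment is obtained not by ``expanding $P$ into cylinder indicators'' but by a Dirichlet-approximation/pigeonhole argument together with the $=T$ operation of \cref{def:same}, which collapses carefully chosen blocks of coordinates so that the restricted product function becomes \emph{nearly constant}; then \eqref{eq:bias} gives $\E|\mu(E_1'')-\mu(E_1)|\gtrsim\gamma$, hence $\E[\mu(E_1'')^2]\ge\mu(E_1)^2+\Omega(\gamma^4)$ while $\E[\mu(E_1'')]$ stays near $\mu(E_1)$. Only at the very end, once the mass of non-good triples is below $\delta\tau/100$, does one average to pick a single good triple with $\mu(S')\ge(\alpha+\tau/2)\mu(E_1'\boxtimes E_2')$.

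This is exactly what your ``main obstacle'' paragraph worries about, and the worry is justified: on a single subcube where $\mu(E_1)$ has risen, there is no mechanism to keep $\mu(S)/\mu(E_1\boxtimes E_2)$ from dropping, and no averaging over cylinders fixes this because the cylinders on which $E_1$ is denser are correlated with $E_1$, not with $S$. The distribution-level bookkeeping is precisely how the paper decouples these two requirements. Two smaller issues: your invocation of \cref{lemma:me1e2} to bound $\Phi_t$ is circular (that lemma assumes the product pseudorandomness you are trying to achieve; the trivial bound $\mu(E_i)\ge\mu(E_1\boxtimes E_2)$ would suffice instead), and the sign of your potential is off---if $\mu(E_1^{(t)})$ increases then $\Phi_t=-\log(\mu(E_1^{(t)})\mu(E_2^{(t)}))$ \emph{decreases}, so the termination argument as written does not run.
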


We mimic the proof of the uniformization step in Shkredov's bound for the corners problem \cite{Shk05,Shk06} (also see \cite{Green04,Green05} for notes from which we borrowed notation). The high level idea is that if $E_1$ (or $E_2$) is not product function pseudorandom, then we can ``partition'' $[3]^n$ into smaller sets of the form $E_1' \boxtimes E_2'$ so that the measures of these $E_1'$ have higher variance than before. This process eventually terminates, and we find a partition piece under which $S$ still has large enough density. One step of this process amounts to being able to density increment sets that correlate to product functions (after random restriction), which was exactly done in \cite[Section 8]{csp6}.

Now we move towards formally proving \cref{thm:uniform}. Let $\xi$ be some distribution over triples $(S', E_1', E_2')$ with $S' \subseteq E_1' \boxtimes E_2' \subseteq [3]^{n'}$ for some $n'$. Define the \emph{index} of $\xi$ to be:
\[ \mc{I}(\xi) := \E_{(S',E_1',E_2') \sim \xi}[\mu(E_1')^2 + \mu(E_2')^2]. \]
Note that $\mc{I}(\xi) \le 2$ always. The following lemma shows how to perform one round of partitioning.
\begin{lemma}
\label{lemma:oneround}
Let $S \subseteq E_1 \boxtimes E_2 \subseteq [3]^n$ where $1_{E_1}-\mu(E_1)$ or $1_{E_2} - \mu(E_2)$ is not $(n^{1/4}, \gamma)$ product-pseudorandom. Then there is a distribution $\xi$ over triples $(S', E_1', E_2')$ satisfying, for $c = 1/1000$:
\begin{enumerate}
    \item $S' \subseteq E_1' \boxtimes E_2' \subseteq [3]^{n'}$ for $n' \ge n^c$.
    \item $\Big|\E_{(S', E_1', E_2') \sim \xi}[\mu(S')] - \mu(S)\Big| \le n^{-c}$.
    \item $\Big|\E_{(S', E_1', E_2') \sim \xi}[\mu(E_i')] - \mu(E_i)\Big| \le n^{-c}$ for $i = 1, 2$.
    \item $\Big|\E_{(S', E_1', E_2') \sim \xi}[\mu(E_1' \boxtimes E_2')] - \mu(E_1 \boxtimes E_2)\Big| \le n^{-c}$.
    \item For all $(S', E_1', E_2') \in \supp(\xi)$, if $S'$ has a combinatorial line, then $S$ does too.
    \item (Increment) $\mc{I}(\xi) \ge (\mu(E_1)^2 + \mu(E_2)^2) + \gamma^4/2$.
\end{enumerate}
\end{lemma}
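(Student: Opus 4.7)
The plan is to adapt Shkredov's uniformization procedure to our disjoint-product framework, with product pseudorandomness replacing $U^2$-pseudorandomness; the analytic engine is the density-increment step for product-function correlates worked out in \cite[Section 8]{csp6}. Without loss of generality assume $1_{E_1}-\mu(E_1)$ fails $(n^{1/4},\gamma)$ product pseudorandomness (the other case is symmetric). By \cref{def:prodpseudo} there is some $\delta \in [n^{-3/4},1]$ such that, with probability at least $\gamma$ over a restriction $I \sim_{1-\delta}[n]$, $z \sim \mu^{I}$, the restricted function $(1_{E_1}-\mu(E_1))_{I\to z}$ on $\{0,1\}^{\bar I}$ correlates by at least $\gamma$ with some $1$-bounded product function. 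Apply $(I,z)$ simultaneously to $S$, $E_1$, and $E_2$ to produce a triple $(S^{*},E_1^{*},E_2^{*})$ on $[3]^{n'}$ with $n'=|\bar I| \ge n^{1/4}$ almost surely.

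On the $\gamma$-good event, feed the correlate into the density-increment machinery of \cite[Section 8]{csp6}. Its output is a partition $\{F_1^{(j)}\}_j$ of $\{0,1\}^{n'}$ into combinatorial subspaces, each of dimension $n'' \ge n^{1/1000}$, together with the $L^2$-energy bound
\[
\sum_j \mu(F_1^{(j)}) \left( \frac{\mu(E_1^{*} \cap F_1^{(j)})}{\mu(F_1^{(j)})} \right)^{\!2} \ge \mu(E_1^{*})^2 + \Omega(\gamma^2).
\]
Each $F_1^{(j)}$ lifts canonically to a combinatorial subspace of $[3]^{n'}$ on which the disjoint product is isomorphic to $[3]^{n''}$. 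Define $\xi$ by first sampling $(I,z)$; on the $\gamma$-good event, sample $j$ with probability $\mu(F_1^{(j)})$ and output the re-identified triple $(S^{*} \cap (F_1^{(j)}\boxtimes\{0,2\}^{n'}),\, E_1^{*} \cap F_1^{(j)},\, E_2^{*})$ in $[3]^{n''}$; on the bad event, output the trivial refinement $(S^{*},E_1^{*},E_2^{*})$. Conditions (1)–(4) follow from the expected measure preservation of random restrictions together with the partition being a disjoint union of $\{0,1\}^{n'}$, and condition (5) from the fact that neither a restriction nor a passage to a combinatorial subspace can create a new combinatorial line. For condition (6), the $\gamma$-good branch contributes $\Omega(\gamma^2)$ excess to $\E_\xi[\mu(E_1')^2]$ via the displayed energy bound, while $\E_\xi[\mu(E_2')^2] \ge \mu(E_2)^2 - O(n^{-\Omega(1)})$ by Jensen; summing yields $\mc I(\xi) \ge \mu(E_1)^2 + \mu(E_2)^2 + \Omega(\gamma^3)$, comfortably in excess of the $\gamma^4/2$ target.

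The principal subtlety is the subspace partition in the second step: a $1$-bounded product function $\prod_i P_i$ with arbitrary complex factors does not naturally partition $\{0,1\}^{n'}$ into combinatorial subspaces through its level sets. The argument of \cite[Section 8]{csp6} resolves this by quantizing each $P_i$ to a finite set of values, identifying a small $O(\log(1/\gamma))$-sized set of coordinates along which the quantized product function is materially nonuniform, and leaving the remaining coordinates free so that the cells are genuine combinatorial subspaces of $\{0,1\}^{n'}$ of dimension $\ge n^{1/1000}$. The remaining bookkeeping — that each $F_1^{(j)}$ lifts coherently to a combinatorial subspace of $[3]^{n'}$ compatible with $E_2^{*}$, and that the $L^2$ gain on the $E_1$-side dominates the measure drift on the $E_2$-side — is routine and is absorbed by the parameter choices in the lemma.
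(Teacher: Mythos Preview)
Your proposal has a genuine gap in the second step. You claim that the machinery of \cite[Section 8]{csp6} turns a product-function correlate of $1_{E_1^*}-\mu(E_1^*)$ into a partition of $\{0,1\}^{n'}$ by combinatorial subspaces obtained by ``quantizing each $P_i$'' and then ``fixing a small $O(\log(1/\gamma))$-sized set of coordinates along which the quantized product function is materially nonuniform.'' This mechanism does not work for general $1$-bounded product functions. Take $P_i(0)=1$, $P_i(1)=-1$ for every $i$; then $P(x)=(-1)^{|x|}$ is the full parity character, every factor is equally ``nonuniform,'' and fixing any $O(\log(1/\gamma))$ coordinates leaves a function that is still $\pm 1$ with equal frequency on each cell. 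No choice of a small fixed-coordinate set makes $P$ close to constant on the cells, so the displayed energy bound cannot be derived this way. More generally, whenever the factors $P_i$ are unimodular with phases that do not individually vanish, quantization buys nothing.

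The paper handles exactly this obstruction with a different idea: write $P_j(x)=e^{2\pi i v_j(x)}$, use pigeonhole to gather $N=m^{1/72}$ buckets $S_1,\dots,S_N$ of coordinates with nearly equal phase vectors, use Dirichlet approximation inside each bucket to choose a small multiplier $k_i\le m^{1/12}$ with $\|k_i v\|_\infty$ tiny, and then apply the coordinate-\emph{merging} operation $=T_i$ (\cref{def:same}) with $|T_i|=k_i$ followed by a random restriction of all remaining coordinates. The point is that merging $k_i$ coordinates with nearly equal phase vector $v$ replaces their contribution to the phase by $k_i v$, which Dirichlet has arranged to be negligible; hence the resulting $Q$ is constant up to $O(m^{-1/72})$, and the correlation $\gamma$ becomes a genuine bias $|\mu(E_1'')-\mu(E_1)|\ge \gamma-o(1)$, from which the variance (index) increment follows. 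In the parity example this is just ``pair up coordinates so each pair contributes $(+1)$,'' but the pigeonhole/Dirichlet step is what makes it work uniformly. The $=T$ operation, not coordinate-fixing, is the step that produces combinatorial subspaces of $[3]^n$ compatible with condition (5); your sketch is missing this ingredient, and the appeal to \cite{csp6} does not supply it in the $\dhj$ setting.
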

Condition $1$ simply says that the new instances we produce are in a polynomially large number of dimensions. Conditions $2$-$4$ say that the densities of the objects we care about, namely $S, E_1, E_2, E_1 \boxtimes E_2$, only change by a negligible amount. Condition $5$ says that if the new instances have combinatorial lines, then the original instance did too. Finally, Condition $6$ says that the index of the set has increased.

Beyond standard random restrictions, the proof of \cref{lemma:oneround} also requires another operation to generate the triples $(S', E_1', E_2')$ that we increment onto. This operation takes a subset $T \subseteq [n]$ and forces that $x_t = x_{t'}$ for all $t, t' \in T$.
\begin{definition}
\label{def:same}
For a function $g: [3]^n \to \bbC$ and $T \subseteq [n]$, define $g_{=T}: [3]^{n-|T|+1} \to \bbC$ as follows. For $x \in [3]$ and $y \in [3]^{[n] \setminus T}$ let $v \in [3]^n$ be the vector where $v_i = y_i$ for $i \in [n] \setminus T$ and $v_i = x$ otherwise. Then $g_{=T}(x,y) := g(v)$. For disjoint sets $T_1, \dots, T_N$ we also write $g_{=T_1,\dots,T_N} = (\dots(g_{=T_1})\dots)_{=T_N}$.
\end{definition}
Our next lemma argues that if $T$ is a random subset of $S \subseteq [n]$ of size much smaller than $\sqrt{|S|}$, then performing the operation described in \cref{def:same} does not significantly affect the TV distance on the uniform distribution. This is more or less a special case of \cite[Lemma 6.3]{DHJ12}, but we provide a proof sketch for this easier case for completeness.
\begin{lemma}
\label{lemma:sametv}
Let $\nu$ be the distribution on $[3]^n$ over the $v$ vector (as defined in \cref{def:same}) obtained by sampling $T \subseteq S, |T| = k$ and $x \in [3]^{n-k+1}$ uniformly. Then $\dtv(\nu, \mu) \le \frac{10k}{\sqrt{|S|}}$, where $\mu$ is the uniform distribution on $[3]^n$.
\end{lemma}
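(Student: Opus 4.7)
I would bound $\dtv(\nu,\mu)$ via the $\chi^2$-divergence. First, I compute the density ratio $Y(v) := \nu(v)/\mu(v)$ explicitly: counting the $(T, x, y)$-tuples that produce a given $v$ gives
\[
Y(v) \;=\; \frac{3^{k-1}\, N(v)}{\binom{|S|}{k}}, \qquad N(v) \;:=\; \binom{s_0(v)}{k} + \binom{s_1(v)}{k} + \binom{s_2(v)}{k},
\]
where $s_j(v) := |\{i \in S : v_i = j\}|$, so $N(v)$ is the number of $k$-subsets of $S$ on which $v$ is constant. One checks $\E_\mu[Y] = 1$, and by Cauchy--Schwarz,
\[
\dtv(\nu,\mu) \;=\; \tfrac{1}{2}\,\E_\mu|Y-1| \;\leq\; \tfrac{1}{2}\sqrt{\E_\mu[Y^2] - 1}.
\]

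Next, I compute $\E_\mu[Y^2]$ combinatorially. Expanding $N(v)^2$ as a double sum over pairs $(T,T')$ of size-$k$ subsets of $S$ and exchanging with the expectation reduces matters to evaluating $\Pr_\mu[v \text{ constant on both } T \text{ and } T']$. This is $3^{2-2k}$ when $T \cap T' = \emptyset$ (independence) and $3^{1 - |T \cup T'|}$ otherwise (the two constant values must coincide, forcing $v$ to be constant on $T \cup T'$). Organizing pairs by the overlap size $j = |T \cap T'|$ and absorbing the disjoint term into the sum yields the clean formula
\[
\E_\mu[Y^2] \;=\; \frac{2\binom{|S|-k}{k}}{3\binom{|S|}{k}} \;+\; \frac{1}{3}\,\E[3^X],
\]
where $X \sim \mathrm{Hypergeometric}(|S|, k, k)$ is the overlap of two independent uniform random $k$-subsets of $S$.

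Finally, I expand both terms to leading order in $k/|S|$. The factorial moment identity $\E\!\left[\binom{X}{m}\right] = \binom{k}{m}^2/\binom{|S|}{m}$, a direct double count, gives $\E[3^X] = \sum_{m \geq 0} 2^m \binom{k}{m}^2/\binom{|S|}{m} = 1 + 2k^2/|S| + O(k^4/|S|^2)$, while $\binom{|S|-k}{k}/\binom{|S|}{k} = \prod_{i=0}^{k-1}(1 - k/(|S|-i)) = 1 - k^2/|S| + O(k^3/|S|^2)$. Substituting, the leading $\pm 2k^2/(3|S|)$ corrections cancel exactly, leaving $\E_\mu[Y^2] - 1 = O(k^4/|S|^2)$. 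Hence $\dtv(\nu,\mu) \lesssim k^2/|S|$, which is at most $10k/\sqrt{|S|}$ whenever $k \leq \sqrt{|S|}$ (for $k > \sqrt{|S|}$ the claimed bound is trivial since $\dtv \leq 1$). The only nonroutine step is this cancellation: each of the two terms in the formula for $\E_\mu[Y^2]$ individually deviates from its constant part by $\Theta(k^2/|S|)$ (which may be of order $1$), so the exact algebraic cancellation -- ultimately a Vandermonde-type identity -- is what produces a chi-squared of order $k^4/|S|^2$ and thus the linear-in-$k/\sqrt{|S|}$ total variation bound.
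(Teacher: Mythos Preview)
Your approach is correct and genuinely different from the paper's. The paper gives a brief inductive/coupling argument: it passes to the auxiliary distribution $\nu'$ where the common value on $T$ is a fixed symbol $a$, handles $k=1$ by a direct calculation, and for larger $k$ views sampling $T$ as first sampling a size-$(k-1)$ set and then adding one more point, picking up an additive $O(1/\sqrt{|S|})$ in total variation at each step. Your route via the $\chi^2$-divergence is more computational but entirely self-contained, and in fact yields the stronger bound $\dtv(\nu,\mu)=O(k^2/|S|)$ rather than $O(k/\sqrt{|S|})$. One remark on emphasis: the cancellation you highlight is what gives this improved $k^2/|S|$ bound, but it is not needed for the lemma as stated. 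Since $\binom{|S|-k}{k}/\binom{|S|}{k}\le 1$, one has directly
\[
\E_\mu[Y^2]-1 \;\le\; \tfrac{1}{3}\bigl(\E[3^X]-1\bigr) \;=\; \tfrac{1}{3}\sum_{m\ge 1}\frac{2^m\binom{k}{m}^2}{\binom{|S|}{m}} \;\le\; \tfrac{1}{3}\bigl(e^{4k^2/|S|}-1\bigr),
\]
which is $O(k^2/|S|)$ in the nontrivial regime $k\le \sqrt{|S|}$ and already gives $\dtv\le 10k/\sqrt{|S|}$; so your claim that the individual $\Theta(k^2/|S|)$ deviations ``may be of order $1$'' does not arise in the range where the bound has content.
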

\begin{proof}
Further let $\nu'$ to be the distribution where we sample $T \subseteq S$ and set $x_t = a$ for all $t \in T$, where $a$ is some fixed element of $[3]$. We will prove that $\dtv(\nu', [3]^n) \le \frac{10k}{\sqrt{|S|}}$. When $k = 1$ the claim follows by a direct calculation. For $k > 1$ note that sampling $T \subseteq S$ can be achieved by sampling $T' \subseteq S$ with $|T| = k-1$ and then sampling $t \in S \setminus T'$ and setting $T = T' \cup \{t\}$. Thus the result follows by induction.
\end{proof}
We are now in a position to prove \cref{lemma:oneround}.
\begin{proof}[Proof of \cref{lemma:oneround}]
We consider the case where $1_{E_1} - \mu(E_1)$ is not product pseudorandom -- the case of $E_2$ is similar. Throughout this proof we will use $E_1(x)$ in place of $1_{E_1}(x)$ (and similar for $E_2, S$). Define the sets $(E_1)_{I \to z} \subseteq \{0, 1\}^{[n]\setminus I}$ and $(E_1)_{=T} \subseteq \{0,1\}^{n-|T|+1}$ in the expected way.

Let $f := E_1 - \mu(E_1)$. The first step is to take some $I \subseteq [n]$ with $|I| \le n-n^{1/4}/2$ such that
\[ \Pr_{z \sim \mu^{\otimes I}}\left[\exists \{P_i: [3] \to \bbC, \|P_i\|_\infty \le 1\}_{i \in \bar{I}} \enspace \text{ with } \enspace \Big|\E_{x \sim \mu^{\otimes \bar{I}}}\Big[f_{I\to z}(x) \prod_{i \in \bar{I}} P_i(x_i) \Big]\Big| \ge \gamma \right] \ge \gamma, \] which is guaranteed to exist because $f$ is not $(n^{1/4}, \gamma)$ product pseudorandom.

For $z \in [3]^I$ define $S' = S_{I \to z}$, $E_1' = (E_1)_{I \to z}$, $E_2' = (E_2)_{I \to z}$, and $m = n - |I| \ge n^{1/4}/2$. Relabel $\bar{I}$ as $[m]$. For $\gamma$ fraction of such $z$, there are functions $P_1, \dots, P_m$ such that:
\begin{align} \Big|\E_{x \sim \mu^{\otimes m}}\Big[f_{I\to z}(x) \prod_{i=1}^m P_i(x_i) \Big]\Big| \ge \gamma. \label{eq:zsat} \end{align}
We focus on such a $z$ and perform further operations to $(S', E_1', E_2')$. Let $v_1, \dots, v_m: [3] \to \R/\Z$ be such that $P_j(x) = e^{2\pi i v_j(x)}$ for all $j \in [m]$, $x \in [3]$.
Let $\zeta = \frac{1}{72}$, and let $N = m^{\zeta}$. For $x \in (\R/\Z)^n$ let $\|x\|_\infty := \max_{i\in[n]} |x_i|_{\R/\Z}$.
By the Pigeonhole principle, we can find disjoint sets $S_1, \dots, S_N$ of size $|S_i| = \sqrt{m}/2$ such that for every $i \in [N]$ and $j, j' \in S_i$, it holds that $\|v_j - v_{j'}\|_\infty \le m^{-\frac{1}{6}}$. Let $v$ be an arbitrary vector in $S_i$, and let $1 \le k_i \le m^{\frac{1}{12}}$ be such that $\|k_iv\|_\infty \le m^{-\frac{1}{36}}$ -- such a $k_i$ exists by Dirichlet approximation.

Now let $T_i$ be a uniformly random subset of $S_i$ of size $k_i$ for all $i \in [N]$ and perform the following sequence of restrictions. For all $i \in [N]$ force that $x_j = x_{j'}$ for all $j, j' \in T_i$, and then uniformly random restrict all coordinates in $J := [m] \setminus (T_1 \cup \dots \cup T_N)$. Let $P = P_1\dots P_m$ and $Q = (P_{=T_1,\dots,T_N})_{J\to u}$. We will argue that $Q$ is nearly constant. For $x, y \in [3]^N$,
\begin{align*}
    Q(x) - Q(y) = \exp\Big(2\pi i \sum_{j=1}^N \sum_{t \in T_j} v_t(y)\Big)\left(\exp\Big(2\pi i \sum_{j=1}^N \sum_{t \in T_j} (v_t(x)-v_t(y))\Big) - 1 \right)\prod_{i \in ([m] \setminus \cup_i T_i)} P_i(u_i).
\end{align*}
For $j \in [N]$ and let $v \in T_j$ be such that $\|k_jv\|_\infty \le m^{-\frac{1}{36}}$. Then
\[ \Big\|\sum_{t \in T_j} v_t \Big\|_\infty \le \|k_jv\|_\infty + \sum_{t \in T_j} \|v_t - v\|_\infty \le m^{-\frac{1}{36}} + k_j m^{-\frac{1}{6}} \le 2m^{-\frac{1}{36}}. \]
Combining this with the above yields that $|Q(x) - Q(y)| \le 100N m^{-\frac{1}{36}} = 100m^{-\frac{1}{72}}.$

By \cref{lemma:sametv} we know that
\begin{align*}
    &\E_{\substack{T_1,\dots,T_N \\ u \sim [3]^J}} \left|\E_{x \sim [3]^N}\left[(f_{I\to z, J \to u})_{=T_1,\dots,T_N}(x) Q(x) \right] \right| \\
    \ge ~&\left|\E_{x \sim [3]^m} f_{I\to z}(x) P(x) \right| - \frac{10\sum_{i=1}^N k_i}{(\sqrt{m}/2)^{1/2}}
    \ge \left|\E_{x \sim [3]^m} f_{I\to z}(x) P(x) \right| - \frac{10Nm^{1/12}}{m^{1/4}/2} \\
    \ge ~& \gamma - m^{-1/8}.
\end{align*}
By using that $Q$ is nearly constant above, the previous inequality becomes:
\begin{align}
\E_{\substack{T_1,\dots,T_N \\ u \sim [3]^J}} \left|\mu(((E_1)_{I \to z, J \to u})_{=T_1,\dots,T_N}) - \mu(E_1)\right| \ge \gamma - m^{-1/8} - 100m^{-1/72} \ge \gamma - 101m^{-1/72}. \label{eq:bias}
\end{align}
By \cref{lemma:sametv} again we also know that:
\begin{align}
\left|\E_{\substack{T_1,\dots,T_N \\ u \sim [3]^J}} \left[\mu(((E_1)_{I \to z, J \to u})_{=T_1,\dots,T_N}) \right] - \mu((E_1)_{I \to z}) \right| \le m^{-1/8}. \label{eq:bias2}
\end{align}
Finally, define distribution $\xi$ over triples $(S'', E_1'', E_2'')$ as follows. Let $z \sim [3]^I$. If $z$ did not satisfy \eqref{eq:zsat} for any product function $P = P_1 \dots P_m$ then set $(S'', E_1'', E_2'') = (S', E_1', E_2')$, where recall that $S' = S_{I \to z}$, $E_1' = (E_1)_{I \to z}$, and $E_2' = (E_2)_{I \to z}$. If $z$ satisfies \eqref{eq:zsat} then set $S'' = (S_{I \to z, J \to u})_{=T_1,\dots,=T_N}$, $E_1'' = ((E_1)_{I \to z, J \to u})_{=T_1,\dots,=T_N}$, and $E_2'' = ((E_2)_{I \to z, J \to u})_{=T_1,\dots,=T_N}$ for $T_1, \dots, T_N$ and $u$ sampled as described above. We verify the conditions $1$-$6$ of the lemma.

Condition $1$ follows by construction, as $N := m^{1/72} \ge (n^{1/4}/2)^{1/72} \ge n^{1/300}$. Conditions $2$-$4$ follow by applications of \cref{lemma:sametv} in the same way as \eqref{eq:bias2}.
Condition $5$ follows by constrction: restrictions and $=T$ operations can only reduce the number of combinatorial lines.

To see Condition $6$, we write
\begin{align*}
    \E[\mu(E_1'')^2] &= \E\left[\Big|\mu(E_1'') - \E[\mu(E_1'')]\Big|^2 \right] + \E[\mu(E_1'')]^2 \\
    &\ge \left(\gamma(\gamma - 101m^{-1/72})\right)^2 + (\mu(E_1) - m^{-1/8})^2 \ge \mu(E_1)^2 + 0.6\gamma^4,
\end{align*}
where the first inequality uses \eqref{eq:bias} and the fact that a $\gamma$ fraction of $z$'s satisfy \eqref{eq:zsat}, and \eqref{eq:bias}. Also by Jensen, $\E[\mu(E_2'')^2] \ge \E[\mu(E_2'')]^2 \ge \mu(E_2)^2 - m^{-1/8}$, where we have used Condition $3$. Condition $6$ follows by combining these.
\end{proof}

To prove \cref{thm:uniform} we repeatedly apply \cref{lemma:oneround} until the mass of the triples $(S', E_1', E_2')$ that are not product pseudorandom is small.
\begin{proof}[Proof of \cref{thm:uniform}]
Call a triple $(S', E_1', E_2')$ with $S' \subseteq E_1' \boxtimes E_2' \subseteq [3]^{n'}$ \emph{good} if both $1_{E_1'} - \mu(E_1')$ and $1_{E_2'} - \mu(E_2')$ are $((n')^{1/4}, \gamma)$ product pseudorandom.

Let $\xi^{(0)}$ have mass $1$ on $(S, E_1, E_2)$. Do the following algorithm: for $t \ge 0$ terminate if
\[ \Pr_{(S',E_1',E_2') \sim \xi^{(t)}}[(S',E_1',E_2') \text{ is not good}] \le \delta\tau/100. \]
Otherwise, define $\xi^{(t+1)}$ as follows. Sample $(S', E_1', E_2') \sim \xi^{(t)}$. If $(S', E_1', E_2')$ is good, then keep that mass in $\xi^{(t+1)}$. Otherwise, sample $(S'', E_1'', E_2'')$ from the distribution guaranteed by \cref{lemma:oneround} and put that mass in $\xi^{(t+1)}$. By \cref{lemma:oneround} we know that $\mc{I}(\xi^{(t+1)}) \ge \mc{I}(\xi^{(t)}) + \delta\tau\gamma^4/200$, and hence the algorithm terminates within $T := O(\delta^{-1}\tau^{-1}\gamma^{-4})$ steps.

Let $n' = n^{1000^{-T}}$. By applying \cref{lemma:oneround} inductively we know that
\[ \Big|\E_{(S',E_1',E_2') \sim \xi^{(T)}}[\mu(S')] - \mu(S) \Big| \le T/n' \enspace \text{ and } \enspace \Big|\E_{(S',E_1',E_2') \sim \xi^{(T)}}[\mu(E_1' \boxtimes E_2')] - \mu(E_1 \boxtimes E_2) \Big| \le T/n'. \]
Thus we get that for $c' = 1000^{-T}$,
\begin{align*}
    &\E_{(S',E_1',E_2') \sim \xi^{(T)}}\left[1_{(S',E_1',E_2') \text{ good}}\Big(\mu(S') - (\alpha+\tau/2)\mu(E_1' \boxtimes E_2') \Big) \right] \\
    \ge ~& \mu(S) - (\alpha + \tau/2)\mu(E_1 \boxtimes E_2) - 2Tn^{-c'} - \delta\tau/100 \ge \delta\tau/3,
\end{align*}
for our choice of parameters. Thus there is $(S', E_1', E_2') \in \supp(\xi^{(T)})$ that is good, $\mu(S') \ge (\alpha+\tau/2)\mu(E_1' \boxtimes E_2')$. For such choice, $\mu(E_1' \boxtimes E_2') \ge \mu(S') \ge \delta\tau/3$ as $S'\subseteq E_1'\boxtimes E_2'$. Also, $S' \subseteq [3]^{n'}$, which concludes the proof.
\end{proof}

\begin{proof}[Proof of~\cref{thm:main}]
Assume for contradiction that $S$ has no combinatorial line. Initially, for $E_1 = \{0,1\}^n$ and $E_2 = \{0,2\}^n$ and $|S| = \alpha_0 \cdot 3^n$, it holds that $(S, E_1, E_2) \in \struct_{\alpha_0}$. If $(S, E_1, E_2) \in \struct_{\alpha}$ for $S \subseteq [3]^n$ and $\delta := \mu(E_1 \boxtimes E_2)$, then applying \cref{thm:finalincr,thm:uniform} gives a new $(S', E_1', E_2') \in \struct_{\alpha + \Omega(\alpha^{15})}$ where $S' \subseteq [3]^{n'}$, and $n' \ge n^{1000^{-O(\gamma^{-4}\delta^{-1}\alpha^{-O(1)})}}$. Also if $S'$ has a combinatorial line, then $S$ does too and $\mu(E_1' \boxtimes E_2') \ge \Omega(\alpha^{O(1)})\delta$.

Thus the process terminates in $O(\alpha_0^{-15})$ steps, and $\delta = \mu(E_1 \boxtimes E_2) \ge \exp(-\alpha_0^{-O(1)})$ throughout the process. Throughout, we have required that $\log(1/\gamma)^{-c} < \delta^{O(1)}$, which holds for $\gamma = \exp(-\exp(\alpha_0^{-O(1)}))$. For $\alpha_0 = (\log\log\log\log n)^{-c}$ for sufficiently small $c$, the dimension during the iterative process is always at least
\[ n^{1000^{-O(\gamma^{-4}\delta^{-1}\alpha_0^{-O(1)})}} \ge \exp((\log n)^{0.999}). \]
This completes the proof.
\end{proof}

\section*{Acknowledgments}
We would like to thank Pratik
Worah, Madhur
Tulsiani, Gabor Kun, Per Austrin, Johan H\r{a}stad, and Ame Khot for several
discussions
that helped start the project.

{\small
\bibliographystyle{alpha}
\bibliography{refs}}

\begin{thebibliography}{BKLM24b}

\bibitem[AS74]{AS74}
M.~Ajtai and E.~Szemer\'edi.
\newblock Sets of lattice points that form no squares.
\newblock {\em Studia Sci. Math. Hungar.}, 9:9--11, 1974.

\bibitem[Aus11]{Austin11}
Tim Austin.
\newblock Deducing the density {H}ales--{J}ewett theorem from an infinitary
  removal lemma.
\newblock {\em Journal of Theoretical Probability}, 24:615--633, 2011.

\bibitem[Beh46]{B46}
F.~A. Behrend.
\newblock On sets of integers which contain no three terms in arithmetical
  progression.
\newblock {\em Proc. Nat. Acad. Sci. U.S.A.}, 32:331--332, 1946.

\bibitem[BKLM24a]{csp6}
Amey Bhangale, Subhash Khot, {Yang P.} Liu, and Dor Minzer.
\newblock On approximability of satisfiable $k$-csps: {VI}.
\newblock 2024.

\bibitem[BKLM24b]{csp7}
Amey Bhangale, Subhash Khot, {Yang P.} Liu, and Dor Minzer.
\newblock On approximability of satisfiable $k$-csps: {VII}.
\newblock 2024.

\bibitem[BKM22]{BKM1}
Amey Bhangale, Subhash Khot, and Dor Minzer.
\newblock On approximability of satisfiable \emph{k}-csps: {I}.
\newblock In Stefano Leonardi and Anupam Gupta, editors, {\em {STOC} '22: 54th
  Annual {ACM} {SIGACT} Symposium on Theory of Computing, Rome, Italy, June 20
  - 24, 2022}, pages 976--988. {ACM}, 2022.

\bibitem[BKM23a]{BKM3ap}
Amey Bhangale, Subhash Khot, and Dor Minzer.
\newblock Effective bounds for restricted $3$-arithmetic progressions in
  $\mathbb{F}_p^n$.
\newblock {\em Electron. Colloquium Comput. Complex.}, {TR23-116}, 2023.
\newblock To appear in Discrete Analysis.

\bibitem[BKM23b]{BKM2}
Amey Bhangale, Subhash Khot, and Dor Minzer.
\newblock On approximability of satisfiable $k$-csps: {II}.
\newblock In Barna Saha and Rocco~A. Servedio, editors, {\em Proceedings of the
  55th Annual {ACM} Symposium on Theory of Computing, {STOC} 2023, Orlando, FL,
  USA, June 20-23, 2023}, pages 632--642. {ACM}, 2023.

\bibitem[BKM23c]{BKM3}
Amey Bhangale, Subhash Khot, and Dor Minzer.
\newblock On approximability of satisfiable k-csps: {III}.
\newblock In Barna Saha and Rocco~A. Servedio, editors, {\em Proceedings of the
  55th Annual {ACM} Symposium on Theory of Computing, {STOC} 2023, Orlando, FL,
  USA, June 20-23, 2023}, pages 643--655. {ACM}, 2023.

\bibitem[BKM24a]{BKM4}
Amey Bhangale, Subhash Khot, and Dor Minzer.
\newblock On approximability of satisfiable k-csps: {IV}.
\newblock In Bojan Mohar, Igor Shinkar, and Ryan O'Donnell, editors, {\em
  Proceedings of the 56th Annual {ACM} Symposium on Theory of Computing, {STOC}
  2024, Vancouver, BC, Canada, June 24-28, 2024}, pages 1423--1434. {ACM},
  2024.

\bibitem[BKM24b]{BKM5}
Amey Bhangale, Subhash Khot, and Dor Minzer.
\newblock On approximability of satisfiable k-csps: {V}.
\newblock {\em CoRR}, abs/2408.15377, 2024.

\bibitem[BS23]{BS23}
Thomas~F Bloom and Olof Sisask.
\newblock An improvement to the {K}elley-{M}eka bounds on three-term arithmetic
  progressions.
\newblock {\em arXiv preprint arXiv:2309.02353}, 2023.
\newblock Available at \url{https://arxiv.org/pdf/2309.02353}.

\bibitem[DKT14]{DKT14}
Pandelis Dodos, Vassilis Kanellopoulos, and Konstantinos Tyros.
\newblock A simple proof of the density {H}ales-{J}ewett theorem.
\newblock {\em Int. Math. Res. Not. IMRN}, (12):3340--3352, 2014.

\bibitem[FK78]{FK78}
H.~Furstenberg and Y.~Katznelson.
\newblock An ergodic {S}zemer\'edi theorem for commuting transformations.
\newblock {\em J. Analyse Math.}, 34:275--291, 1978.

\bibitem[FK89]{FK89}
H.~Furstenberg and Y.~Katznelson.
\newblock A density version of the {H}ales-{J}ewett theorem for {$k=3$}.
\newblock {\em Discrete Math.}, 75(1-3):227--241, 1989.
\newblock Graph theory and combinatorics (Cambridge, 1988).

\bibitem[FK91]{FK91}
H.~Furstenberg and Y.~Katznelson.
\newblock A density version of the {H}ales-{J}ewett theorem.
\newblock {\em J. Anal. Math.}, 57:64--119, 1991.

\bibitem[Gow98]{Gowers98}
W.~T. Gowers.
\newblock A new proof of {S}zemer\'edi's theorem for arithmetic progressions of
  length four.
\newblock {\em Geom. Funct. Anal.}, 8(3):529--551, 1998.

\bibitem[Gow01]{Gowers01}
W.~T. Gowers.
\newblock A new proof of {S}zemer\'edi's theorem.
\newblock {\em Geom. Funct. Anal.}, 11(3):465--588, 2001.

\bibitem[Gre04]{Green04}
Ben Green.
\newblock Finite field models in additive combinatorics.
\newblock {\em arXiv preprint math/0409420}, 2004.
\newblock Available at \url{https://arxiv.org/pdf/math/0409420}.

\bibitem[Gre05a]{Green05}
Ben Green.
\newblock An argument of {S}hkredov in the finite field setting.
\newblock {\em Preprint}, 2005.
\newblock Available at
  \url{https://people.maths.ox.ac.uk/greenbj/papers/corners.pdf}.

\bibitem[Gre05b]{Green05prime}
Ben Green.
\newblock Roth's theorem in the primes.
\newblock {\em Ann. of Math. (2)}, 161(3):1609--1636, 2005.

\bibitem[GT08]{GT08}
Ben Green and Terence Tao.
\newblock The primes contain arbitrarily long arithmetic progressions.
\newblock {\em Ann. of Math. (2)}, 167(2):481--547, 2008.

\bibitem[GT10]{GT10}
Benjamin Green and Terence Tao.
\newblock Linear equations in primes.
\newblock {\em Ann. of Math. (2)}, 171(3):1753--1850, 2010.

\bibitem[GT12]{GT12}
Ben Green and Terence Tao.
\newblock The quantitative behaviour of polynomial orbits on nilmanifolds.
\newblock {\em Ann. of Math. (2)}, 175(2):465--540, 2012.

\bibitem[GT17]{GT17}
Ben Green and Terence Tao.
\newblock New bounds for {S}zemer\'edi's theorem, {III}: a polylogarithmic
  bound for {$r_4(N)$}.
\newblock {\em Mathematika}, 63(3):944--1040, 2017.

\bibitem[GTZ11]{GTZ11}
Ben Green, Terence Tao, and Tamar Ziegler.
\newblock An inverse theorem for the {G}owers {$U^4$}-norm.
\newblock {\em Glasg. Math. J.}, 53(1):1--50, 2011.

\bibitem[GTZ12]{GTZ12}
Ben Green, Terence Tao, and Tamar Ziegler.
\newblock An inverse theorem for the {G}owers {$U^{s+1}[N]$}-norm.
\newblock {\em Ann. of Math. (2)}, 176(2):1231--1372, 2012.

\bibitem[HJ63]{HJ63}
A.~W. Hales and R.~I. Jewett.
\newblock Regularity and positional games.
\newblock {\em Trans. Amer. Math. Soc.}, 106:222--229, 1963.

\bibitem[HLY21]{HLY20}
Rui Han, Michael~T. Lacey, and Fan Yang.
\newblock A polynomial {R}oth theorem for corners in finite fields.
\newblock {\em Mathematika}, 67(4):885--896, 2021.

\bibitem[KM23]{KM23}
Zander Kelley and Raghu Meka.
\newblock Strong bounds for 3-progressions.
\newblock In {\em 2023 {IEEE} 64th {A}nnual {S}ymposium on {F}oundations of
  {C}omputer {S}cience---{FOCS} 2023}, pages 933--973. IEEE Computer Soc., Los
  Alamitos, CA, [2023] \copyright 2023.

\bibitem[Len24]{Leng24}
James Leng.
\newblock A quantitative bound for {S}zemer\'edi's theorem for a complexity one
  polynomial progression over {$\Bbb Z/N\Bbb Z$}.
\newblock {\em Discrete Anal.}, pages Paper No. 3, 33, 2024.

\bibitem[LSS23]{LSS23}
James Leng, Ashwin Sah, and Mehtaab Sawhney.
\newblock Improved bounds for five-term arithmetic progressions.
\newblock {\em arXiv preprint arXiv:2312.10776}, 2023.
\newblock Available at \url{https://arxiv.org/pdf/2312.10776}.

\bibitem[LSS24a]{LSS24}
James Leng, Ashwin Sah, and Mehtaab Sawhney.
\newblock Improved bounds for {S}zemer\'{e}di's theorem.
\newblock {\em arXiv preprint arXiv:2402.17995}, 2024.
\newblock Available at \url{https://arxiv.org/pdf/2402.17995}.

\bibitem[LSS24b]{LSS24b}
James Leng, Ashwin Sah, and Mehtaab Sawhney.
\newblock Quasipolynomial bounds on the inverse theorem for the {G}owers
  ${U}^{s+1}[{N}]$-norm.
\newblock {\em arXiv preprint arXiv:2402.17994}, 2024.
\newblock Available at \url{https://arxiv.org/pdf/2402.17994v3}.

\bibitem[Pel18]{Peluse18}
Sarah Peluse.
\newblock Three-term polynomial progressions in subsets of finite fields.
\newblock {\em Israel J. Math.}, 228(1):379--405, 2018.

\bibitem[Pel19]{Peluse19}
Sarah Peluse.
\newblock On the polynomial {S}zemer\'edi theorem in finite fields.
\newblock {\em Duke Math. J.}, 168(5):749--774, 2019.

\bibitem[Pel20]{Peluse20}
Sarah Peluse.
\newblock Bounds for sets with no polynomial progressions.
\newblock {\em Forum Math. Pi}, 8:e16, 55, 2020.

\bibitem[Pel24]{Peluse24}
Sarah Peluse.
\newblock Subsets of {$\mathbb{F}_p^n \times \mathbb{F}_p^n$} without {$\rm
  L$}-shaped configurations.
\newblock {\em Compos. Math.}, 160(1):176--236, 2024.

\bibitem[Pol09]{DHJ09}
DHJ Polymath.
\newblock Density {H}ales--{J}ewett and {M}oser numbers in low dimensions.
\newblock {\em Unpublished, http://michaelnielsen.org/polymath1}, 2009.

\bibitem[Pol12]{DHJ12}
D.~H.~J. Polymath.
\newblock A new proof of the density {H}ales-{J}ewett theorem.
\newblock {\em Ann. of Math. (2)}, 175(3):1283--1327, 2012.

\bibitem[PP22]{PP22}
Sarah Peluse and Sean Prendiville.
\newblock A polylogarithmic bound in the nonlinear {R}oth theorem.
\newblock {\em Int. Math. Res. Not. IMRN}, (8):5658--5684, 2022.

\bibitem[Rot53]{Roth53}
K.~F. Roth.
\newblock On certain sets of integers.
\newblock {\em J. London Math. Soc.}, 28:104--109, 1953.

\bibitem[Shk05]{Shk05}
I.~D. Shkredov.
\newblock On a problem of {G}owers.
\newblock {\em Izvestiya: Mathematics}, 71(1):46--48, 2005.

\bibitem[Shk06]{Shk06}
I.~D. Shkredov.
\newblock On a generalization of {S}zemer\'edi's theorem.
\newblock {\em Proc. London Math. Soc. (3)}, 93(3):723--760, 2006.

\bibitem[Sze75]{Sz75}
E.~Szemer\'{e}di.
\newblock On sets of integers containing no {$k$} elements in arithmetic
  progression.
\newblock In {\em Proceedings of the {I}nternational {C}ongress of
  {M}athematicians ({V}ancouver, {B}.{C}., 1974), {V}ol. 2}, pages 503--505.
  Canad. Math. Congr., Montreal, QC, 1975.

\bibitem[VdW27]{vdw27}
Bartel~Leendert Van~der Waerden.
\newblock Beweis einer baudetschen vermutung.
\newblock {\em Nieuw Arch. Wiskunde}, 15:212--216, 1927.

\end{thebibliography}

\end{document}